\newtheorem{thm}{Theorem}[section]
\newtheorem{prop}[thm]{Proposition}
\newtheorem{lemma}[thm]{Lemma}
\newtheorem{defn}[thm]{Definition}
\newtheorem{preremark}[thm]{Remark}
\numberwithin{equation}{section}
\newenvironment{rcases}
  {\left.\begin{aligned}}
  {\end{aligned}\right\rbrace}
\newcommand{\R}{\mathbb R}
\newcommand{\grad} {\nabla}
\DeclareMathOperator{\supp}{supp}
\def\XXint#1#2#3{{\setbox0=\hbox{$#1{#2#3}{\int}$}
       \vcenter{\hbox{$#2#3$}}\kern-.5\wd0}}
\newcommand{\meanbar}[1]{%
\setbox0 = \hbox{$#1 \int$}
\hbox to 0pt{%
\thinspace
\hskip 0.1\wd0
\raise 0.5\ht0
\hbox{%
\lower 0.5\dp0
\hbox{\rule{0.8\wd0}{2\linethickness}}
}%
\hss
}%
}
    \newcounter{myfootertablecounter}
\begin{document}
\title{Parabolic Obstacle Problems. \\ Quasi-convexity and Regularity}

\author{Ioannis Athanasopoulos, Luis Caffarelli, Emmanouil Milakis}
\date{}
\maketitle

\begin{abstract}
In a wide class of the so called Obstacle Problems of parabolic type it is shown how to improve the optimal regularity of the solution and as a consequence how to obtain space-time regularity of the corresponding free boundary.
\\

AMS Subject Classifications: 35R35

\textbf{Keywords}: Parabolic Obstacle Problems, Optimal Regularity, Free Boundary Regularity.

\end{abstract}
\let\thefootnote\relax\footnotetext{Part of this work was carried out while the first and the third authors were visiting the University of Texas. They wish to thank the Department of Mathematics and the Institute for Computational Engineering and Sciences for the warm hospitality and support during several visits the last few years. I. Athanasopoulos was partially supported by ELKE. L. Caffarelli was supported by NSF grants. E. Milakis was supported by Marie Curie International Reintegration Grant No 256481 within the 7th European Community Framework Programme and NSF DMS grant 0856687.  I. Athanasopoulos wishes to thank IMPA for the invitation where the majority of the results of the present article were presented during August 2015 (available online \href{http://video.impa.br/index.php?page=analysis-partial-differential-equations}{here}).
}
\tableofcontents
\section{Introduction}\label{Intro}

\doublespacing


Obstacle problems are characterized by the fact that the solution must satisfy unilateral constraint i.e. 
must remain, on its domain of definition or part of it, above a given function the so called obstacle. 
Parabolic obstacle problems, i.e. when the involved operators are of parabolic type, can be formulated in various ways such as a system of inequalities, variational inequalities, Hamilton-Jacobi equation, etc. More precisely, as a system of inequalities, one seeks a solution $u(x,t)$ which satisfies

\begin{equation}\label{intro1}
\begin{aligned}
\begin{rcases}
& u_t+Au\geq 0, \ u  \geq \psi\ \ \  \\
& (u_t+Au)(u-\psi)  =0 \ \ \ 
\end{rcases}  & \  \ {\rm{in}} \ \  \Omega \times (0,T] \\
 u=\phi\ \ \ \ \ \ &\ \ {\rm{on}} \ \  \partial_p(\Omega \times (0,T]) 
\end{aligned}
\end{equation}
or a solution $u(x,t)$ to

\begin{equation}\label{intro2}
\begin{aligned}
u_t+Bu= 0\ \ \  & \ \ \ {\rm{in}} \ \  \Omega \times (0,T] \\
\begin{rcases}  
&u\geq \psi, \ \ \alpha u_t+u_\nu\geq 0\ \ \ \\
&(\alpha u_t+u_\nu)(u-\psi)=0\ \  \\
\end{rcases} &\ \ \ {\rm{on}} \ \  \Gamma \times (0,T] \\
u=\phi\ \ \  & \ \ \ {\rm{on}} \ \  \partial_p(\Omega \times (0,T])\setminus (\Gamma\times (0,T]) 
\end{aligned}
\end{equation}
where $A$ and $B$ are (non-negative) definite elliptic operators. Usually, (\ref{intro1}) is referred as a 
thick obstacle problem and (\ref{intro2}) with $\Gamma\subset\partial\Omega$ (when $\alpha=0$) 
as a Signorini boundary obstacle problem (or thin 
obstacle problem if one takes $\Gamma$ to be a $(n-1)-$ manifold in $\Omega$). We shall refer to (\ref{intro2}) as the dynamic thin obstacle problem if $\alpha>0$ and to nondynamic thin obstacle or Signorini problem if $\alpha=0$. Recently, there is an intense interest, 
perhaps due to the connectivity to jump or anomalous diffusion, to study (\ref{intro1}) in all of $\R^n$ when $A$ is a non-local operator and especially the 
fractional Laplacian. Observe that when $A$ is the $\frac{1}{2}-$Laplacian there is an obvious equivalence between 
(\ref{intro1}) and (\ref{intro2}) which is identified by the Neumann-Dirichlet map, provided that $B$ is minus the Laplacian, $\Gamma\subset\R^{n-1}$, 
and $\alpha=1$. This equivalence remains true for any fractional Laplacian if $B$ is replaced by an appropriate degenerate 
elliptic operator as it was introduced in \cite{CS}.

Every problem of the above mentioned ones and their obvious generalizations is actually a minimum of linear monotone operators therefore second order incremental quotients are "supersolutions" and satisfy a minimum principle. That is "for $z=(x,t)$ with $x\in\Omega$ in (1.1) or $z=(x',t)$ with $x'\in \R^{n-1}$ in (1.2)
$$u(z+w)+u(z-w)-2u(z)$$
has no interior minima". In particular, in the limit
$D_{ww} u$ cannot attain a minimum in the interior of the domain of definition and on the hyperplane in case (1.2). This means minima must occur at the initial or lateral data (minus the hyperplane in case (1.2)). Therefore for an appropriate data we have an $L^{\infty}$ bound from below. This is certainly true if the data is smooth enough or just when the data stays strictly above the obstacle (\S\ref{section3}). In fact, we believe that an appropriate barrier would give interior quasi-convexity of solutions under general data.     
 
The purpose of this work is to show that the quasi-convexity property, absent in the literature so far, has strong implications in the study of the above problems. One such implication is the improvement of the optimal time regularity i.e. we prove that the 
positive time derivative is continuous (\S\ref{section4}) for a wide class of problems. Let us mention that in the literature there are only three cases in which the time derivative is continuous and 
all three rely on the fact that the time derivative is a priori non negative. These are the one-phase Stefan problem (\cite{CFr}), the (non-dynamic) thin 
obstacle problem (\cite{A} only in $n=2$) and, very recently, the parabolic fractional obstacle problem (\cite{CFi}). 

For further implications of the quasi-convexity we concentrate on the (nondynamic) thin obstacle problem or (time dependent) Signorini problem. The other cases i.e. the dynamic parabolic obstacle problem, the nondynamic and dynamic fractional counterparts, as well as the one with parabolic nonlocal operators is a long term project and they will be treated in forthcoming papers (see \cite{ACMPart2}). Also, elsewhere we show how one can get with this approach free boundary regularity for the already known result (\cite {CPS}) of the "thick" obstacle. 
Actually, in this case, i.e. the (time dependent) Signorini problem, we prove the optimal regularity of the space derivative (\S\ref{optimal space}), as a consequence of the parabolic monotonicity formula stated in the appendix of \cite{AC}. Secondly, we prove that the regularity of the time derivative  (\S\ref{optimal time}) near free boundary points of positive parabolic density with respect to the coincidence set is as "good" as that of the space derivative; let us point out that the results in \S\ref{optimal time} are, in fact, independent of the quasi-convexity. And finally, in \S\ref{free boundary}, since \S\ref{optimal time} yields control of the speed of the free boundary, we prove (space and time) regularity of the free boundary near "non-degenerate" free boundary points.

The results of the present paper were presented by the first author in IMPA, Rio de Janeiro, August, 17- 21, 2015 during the "International Conference on Current Trends in Analysis and Partial Differential Equations". A video of the talk is available online at \href{http://video.impa.br/index.php?page=analysis-partial-differential-equations}{http://video.impa.br}.

\section{Quasi-convexity}\label{section3}

In this section we prove the quasi-convexity of the solution for a wide class of Parabolic Obstacle Problems.  In order to avoid technicalities we shall concentrate on five prototypes of this class::

\textit{\textbf{1st prototype (Thick Obstacle Problem):}} Given a bounded domain $\Omega$ in $\R^n$, a function $\psi(x,t)$ 
(the obstacle) where $\psi<0$ on $\partial \Omega\times(0,T]$, $\max \psi(x,0)>0$ and a 
function $\phi$ with $\phi=0$ on $\partial \Omega\times(0,T]$, $\phi\geq \psi$ on $\Omega\times\{0\}$, find a function $u$ such that 

\begin{equation}\label{model1}
\begin{cases}  
u_t-\Delta u\geq 0, \ \ u\geq \psi  &{\rm{in}} \ \  \Omega \times (0,T] \cr
(u_t-\Delta u)(u-\psi)=0 &{\rm{in}} \ \  \Omega \times (0,T] \cr
u=\phi &{\rm{on}} \ \  \partial_p(\Omega \times (0,T]). \cr
\end{cases}
\end{equation}

\textit{\textbf{2nd prototype (Nondynamic Thin Obstacle Problem):}} Given a bounded domain  $\Omega$ in $\R^n$ with part of its 
boundary $\Gamma\subset \partial\Omega$ that lies on $\R^{n-1}$, a function $\psi(x,t)$ (the obstacle) 
where $\psi<0$ on $(\partial \Omega\setminus\Gamma)\times(0,T]$, $\max \psi(x,0)>0$ and a 
function $\phi$ with $\phi=0$ on $(\partial \Omega\setminus\Gamma)\times(0,T]$, $\phi\geq \psi$ on $\Gamma\times\{0\}$, find a function $u$ such that 

\begin{equation}\label{model2in}
\begin{cases}  
u_t-\Delta u=0,   &{\rm{in}} \ \  \Omega \times (0,T] \cr
\partial_\nu u\geq 0, \ \ u\geq \psi &{\rm{on}} \ \  \Gamma \times (0,T] \cr
(\partial_\nu u)(u-\psi)=0  &{\rm{on}} \ \  \Gamma \times (0,T] \cr
u=\phi &{\rm{on}} \ \  \partial_p(\Omega\setminus\Gamma \times (0,T]) \cr
\end{cases}
\end{equation}
where $\nu$ is the outward normal on $\partial\Omega$.

\textit{\textbf{3nd prototype (Dynamic Thin Obstacle Problem):}} Given a bounded domain  $\Omega$ in $\R^n$ with part of its 
boundary $\Gamma\subset \partial\Omega$ that lies on $\R^{n-1}$, a function $\psi(x,t)$ (the obstacle), $\psi<0$ on $(\partial \Omega\setminus\Gamma)\times(0,T]$, $\max \psi(x,0)>0$ and a 
function $\phi$ with $\phi=0$ on $(\partial \Omega\setminus\Gamma)\times(0,T]$, $\phi\geq \psi$ on $\Gamma\times\{0\}$, find a function $u$ such that 

\begin{equation}\label{model3in}
\begin{cases}  
u_t-\Delta u=0,   &{\rm{in}} \ \  \Omega \times (0,T] \cr
\alpha\partial_tu+\partial_\nu u\geq 0, \ \ u\geq \psi &{\rm{on}} \ \  \Gamma \times (0,T] \cr
(\alpha\partial_tu+\partial_\nu u)(u-\psi)=0  &{\rm{on}} \ \  \Gamma \times (0,T] \cr
u=\phi &{\rm{on}} \ \  \partial_p(\Omega\setminus\Gamma \times (0,T]) \cr
\end{cases}
\end{equation}
where where $\alpha\in (0,1]$ and $\nu$ is the outward normal on $\partial\Omega$.

\textit{\textbf{4th prototype (Fractional Obstacle Problem):}} Given a $\psi:\R^{n-1}\times [0,\infty)\rightarrow \R$ such that $\int_{\R^{n-1}}\frac{|\psi|}{(1+|x|)^{n-1+2s}}dx'<+\infty$ 
for all $t>0$ and $\phi:\R^{n-1}\rightarrow \R$ such that $\int \frac{|\phi|}{(1+s)^{n-1+2s}}<+\infty$ for some $0<s<1$, find a function $u$ such that

\begin{equation}\label{model4}
\begin{cases}  
\partial_t u+(-\Delta)^s u\geq0, \ \ u-\psi\geq 0   &{\rm{on}} \ \  \R^{n-1} \times (0,T] \cr
(\partial_tu+(-\Delta)^s u)(u-\psi)= 0 &{\rm{on}} \ \  \R^{n-1}\times (0,T] \cr
u(x,0)=\phi(x) &{\rm{on}} \ \  \R^{n-1}. \cr
\end{cases}
\end{equation}

\textit{\textbf{5th prototype (General Nonlocal Operators):}} Assume that $\psi:\R^{n-1}\times [0,\infty)\rightarrow \R$ is given and let 
$$\mathcal{L}u:=u_t-\int_{\R^{n-1}}g'(u(y,t)-u(x,t))K(y-x)dy$$
where $g:\R\rightarrow [0,\infty)$ is a $C^2(\R)$ function such that $g(0)=0$ and $\Lambda^{-1/2}\leq g''(z)\leq \Lambda^{1/2}$, $z\in \R$ for a given constant $\Lambda>1$. The kernel $K:\R^{n-1}\setminus\{0\}\rightarrow (0,\infty)$ satisfies
\begin{equation}\label{KernelK}
\begin{cases}  
K(-x)=K(x)  &{\rm{for\ any}} \ \  x\in\R^{n-1}\setminus\{0\} \cr
\chi_{\{|x|\leq 3\}}\frac{\Lambda^{-1/2}}{|x|^{n-1+s}}\leq K(x)\leq \frac{\Lambda^{1/2}}{|x|^{n-1+s}}\ &{\rm{for\ any}} \ \  x\in\R^{n-1}\setminus\{0\}. \cr
\end{cases}
\end{equation}
Then find a function $u$ such that
\begin{equation}\label{model5}
\begin{cases}  
\mathcal{L}u\geq0, \ \ u-\psi\geq 0   &{\rm{on}} \ \  \R^{n-1} \times (0,T] \cr
(u-\psi)\mathcal{L}u= 0 &{\rm{on}} \ \  \R^{n-1}\times (0,T] \cr
u(x,0)=\phi(x) &{\rm{on}} \ \  \R^{n-1}. \cr
\end{cases}
\end{equation}
In the following theorem we prove quasi-convexity for the first, the second, the third and the fourth prototype problems. The proof for the fifth prototype problem, although similar, can be found in \cite{ACMPart2}. 
The following theorem can be stated and proved using incremental quotients as it is mentioned in the introduction, for simplicity though, we prove it for the second $t-$derivative. Notice that the corresponding space quasi-convexity is well known from the outset of the problems.
\begin{thm}\label{semiconvexity}
Suppose that in the above problems $\psi$ and $\phi$ are smooth. If $(\phi-\psi)\big|_{t=0}> 0$ then $$||(u_{tt})^-||_{\infty}\leq \max(||(\psi_{tt})||_{\infty},||\Delta^2\phi||_{\infty}).$$
If $(\phi-\psi)\big|_{t=0}\geq 0$ the same estimate holds provided that $(\partial_t\psi-(-\Delta)^s\psi)\big|_{t=0}\geq M>0$ for $s\in(0,1]$ and $M$ sufficiently large.
\end{thm}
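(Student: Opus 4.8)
The plan is to exploit the observation recorded in the Introduction: the second-order time difference
\[
w_h(x,t):=u(x,t+h)+u(x,t-h)-2u(x,t)
\]
is a supersolution of the relevant linear (local or nonlocal) parabolic operator at \emph{every} point of the non-coincidence set, and to combine this with a minimum principle; dividing by $h^2$ and letting $h\to0$ then yields the stated semiconvexity bound. Following the statement, I would argue directly with $v:=u_{tt}$; replacing $v$ by $w_h/h^2$ throughout and Taylor-expanding makes every step rigorous with no a priori regularity of $u$ beyond continuity (the supersolution property at a non-coincidence point $z$ holding because $Lw_h(z)=Lu(z+he_t)+Lu(z-he_t)-2Lu(z)\geq0$, since $Lu\geq0$ everywhere and $Lu(z)=0$). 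Set $C:=\max(\|\psi_{tt}\|_\infty,\|\Delta^2\phi\|_\infty)$.

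First I would record where $v\geq-C$ a priori. On the open coincidence set $u\equiv\psi$, so $v=\psi_{tt}\geq-\|\psi_{tt}\|_\infty\geq-C$; on the free boundary the same bound follows from the difference-quotient form, since there $u=\psi$ and $w_h\geq\psi(x,t+h)+\psi(x,t-h)-2\psi(x,t)\geq-Ch^2$. On the lateral boundary $u=\phi$ is $t$-independent in the prototypes, so $v=0$ (for the thin prototypes $2$, $3$ one differentiates instead the conditions $\partial_\nu u=0$, resp. $\alpha\partial_tu+\partial_\nu u=0$, which hold on the non-coincidence part of $\Gamma$, to get $\partial_\nu v=0$, resp. $\alpha\partial_tv+\partial_\nu v=0$). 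Finally, because $(\phi-\psi)|_{t=0}>0$ and the data are smooth, $u>\psi$ in a slab $\Omega\times[0,t_0]$ (on $\Gamma\times[0,t_0]$ for the thin problems), so $u$ solves the homogeneous equation there; differentiating it twice in $t$ gives $v|_{t=0}=\Delta^2\phi$ (in the fractional prototype the analogous fourth-order expression in $(-\Delta)^s$), whence $v\geq-C$ on $\{t=0\}$ (for the difference-quotient version the bottom of the working region is $t=h$ and one checks $w_h\geq-(C+o(1))h^2$ there using this smoothness).

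Next I would let $U:=\{v<-C\}$, an open set. By the previous paragraph $U$ is disjoint from the coincidence set, hence contained in the set where $u$ solves the homogeneous equation (the non-coincidence set $\{u>\psi\}$ for prototypes $1$ and $4$; all of $\Omega\times(0,T]$ for the thin prototypes $2$, $3$), and $U$ is disjoint from the parabolic boundary of the cylinder. There $v$ solves the homogeneous linear parabolic equation ($v_t-\Delta v=0$ for prototypes $1$–$3$, $v_t+(-\Delta)^sv=0$ for prototype $4$). Its parabolic boundary meets that set only where $v=-C$ and meets the boundary of the cylinder only where $v\geq-C$, so the minimum principle for the operator in question — in the fractional prototype in the strengthened form that also excludes a negative minimum at spatial infinity, using the decay hypotheses on $\psi,\phi$ — forces $\inf_Uv\geq-C$, i.e. $U=\varnothing$. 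For the thin prototypes one extra case remains: a minimum of $v$ in the relative interior of the non-coincidence part of $\Gamma$ at a time $t_0>0$; there Hopf's lemma gives $\partial_\nu v<0$ (and $v_t\leq0$), contradicting $\partial_\nu v=0$ (resp. $\alpha v_t+\partial_\nu v=0$) unless $v$ is locally constant, in which case it is continued back to $\{t=0\}$ or to the Dirichlet part of $\Gamma$. In every case $v\geq-C$, i.e. $\|(u_{tt})^-\|_\infty\leq C$.

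For the second assertion, with $(\phi-\psi)|_{t=0}\geq0$ only, the identity $v|_{t=0}=\Delta^2\phi$ fails on the initial contact set $\{\phi=\psi\}\cap\{t=0\}$, and a free boundary could emanate from there and destroy the boundedness of $u_{tt}$ near $t=0$. The remedy is a barrier argument at $t=0$: the hypothesis $(\partial_t\psi-(-\Delta)^s\psi)|_{t=0}\geq M$ with $M$ large is a compatibility/sign condition on the obstacle which, upon comparing $u$ with $\psi$ corrected by a small ($O(M^{-1})$) caloric, resp. $s$-caloric, profile, confines the solution to (a controlled neighborhood of) the obstacle throughout an initial layer around the contact set — so that $v=\psi_{tt}\geq-C$ there — while off the contact set one still has $u>\psi$ and $v|_{t=0}=\Delta^2\phi$ as before; with the initial layer controlled, the argument above applies unchanged. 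The hard part will be this last step: pinning down the correct barrier and verifying that the hypothesis on $\psi$ at $t=0$ genuinely keeps the solution attached to (or within a controlled distance of) the obstacle during an initial time interval. The Hopf-lemma step on $\Gamma$ for the thin prototypes and the non-compactness of $\R^{n-1}$ in the fractional minimum principle (barriers at infinity built from the decay of the data) need the usual care but are routine.
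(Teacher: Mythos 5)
Your proof of the first assertion takes a genuinely different route from the paper's. The paper regularizes the problem by penalization: it works with the smooth approximating solutions $u^{\varepsilon}$ of the penalized equation with penalty $\beta_{\varepsilon}(u^{\varepsilon}-\psi^{\varepsilon})$, differentiates that equation twice in $t$, uses $\beta'_{\varepsilon}\geq 0$ (and the concavity sign of $\beta''_{\varepsilon}$) to get a clean differential inequality of the form
\[
\Delta u^{\varepsilon}_{tt}-\partial_t u^{\varepsilon}_{tt}\leq \beta'_{\varepsilon}(u^{\varepsilon}-\psi^{\varepsilon})\,(u^{\varepsilon}_{tt}-\psi^{\varepsilon}_{tt})
\]
together with smooth initial/lateral data $u^{\varepsilon}_{tt}=\phi_{tt}$ and $u^{\varepsilon}_{tt}(x,0)=\Delta^2\phi$, and then invokes the minimum principle once for the penalized solution; the $\varepsilon$-uniform bound survives the limit. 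You instead argue directly on the obstacle-problem solution via the second incremental quotient $w_h$, exploit the supersolution property of $w_h$ on the non-coincidence set, show $\{w_h<-Ch^2\}$ is disjoint from the coincidence set, initial slab and lateral data, and then close with a minimum principle plus Hopf on $\Gamma$ (and barriers at infinity in the fractional case). That is precisely the heuristic the authors sketch in the Introduction, and it is correct in spirit; what the penalization buys them is that $u^{\varepsilon}$ is globally smooth, so they never have to track the free boundary, worry about whether $u_{tt}$ makes sense up to $\partial U$, or invoke a Hopf-type argument at a putative boundary minimum in the thin and dynamic prototypes — all of which your version has to handle by hand (your difference-quotient device and the disjointness argument are the right way to handle the bulk of it, but the boundary cases on $\Gamma$ and at spatial infinity still need the care you flag).

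The genuine gap is in the second assertion. You correctly identify that the identity $v\vert_{t=0}=\Delta^2\phi$ fails on the initial contact set and that a barrier is needed to keep the solution attached to the obstacle in an initial layer there, but you do not supply that barrier; you explicitly defer it as ``the hard part'' and leave it open. In the paper's framework the hypothesis $(\partial_t\psi-(-\Delta)^s\psi)\vert_{t=0}\geq M$ is used differently and more concretely: in the penalized equation the penalty $\beta_{\varepsilon}$ is \emph{bounded} and $\beta'_{\varepsilon}\geq 0$, so the extra terms that appear in $u^{\varepsilon}_{tt}(x,0)$ and in the zero-order coefficient when the strict separation is lost are controlled by $\|\beta_{\varepsilon}\|_{\infty}$ times derivatives of $\psi$, and choosing $M$ large forces the sign needed for the same minimum-principle conclusion. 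If you want to complete your direct proof of the second assertion you would have to produce the barrier you allude to and verify that it controls the free-boundary emanation from $\{\phi=\psi\}\times\{t=0\}$; as written, this part of the statement is not proved.
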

\begin{proof}
In all four cases we use the penalization method i.e. one obtains the solution $u$ as a limit of $u^{\varepsilon}$ as $\varepsilon\rightarrow 0$, 
where $u^\varepsilon$ is a solution, in case (\ref{model1}) of the problem
\begin{equation}\label{penmodel1}
\begin{cases}  
\Delta u^\varepsilon-\partial_t u^\varepsilon =\beta_\varepsilon(u^\varepsilon-\psi^\varepsilon) &{\rm{in}} \ \  \Omega \times (0,T] \cr
u^\varepsilon=\phi+\varepsilon &{\rm{on}} \ \  \partial_p(\Omega \times (0,T]). \cr
\end{cases}
\end{equation}
or, in case (\ref{model2in}) of the problem
\begin{equation}\label{penmodel2}
\begin{cases}  
\partial_tu^\varepsilon-\Delta u^\varepsilon=0,   &{\rm{in}} \ \  \Omega \times (0,T] \cr
-\partial_\nu u^\varepsilon=\beta_\varepsilon(u^\varepsilon-\psi) &{\rm{on}} \ \  \Gamma \times (0,T] \cr
u^\varepsilon=\phi+\varepsilon &{\rm{on}} \ \  \partial_p(\Omega\setminus\Gamma \times (0,T]) \cr
\end{cases}
\end{equation}

or, in case (\ref{model3in}) of the problem
\begin{equation}\label{penmodel2}
\begin{cases}  
\partial_tu^\varepsilon-\Delta u^\varepsilon=0,   &{\rm{in}} \ \  \Omega \times (0,T] \cr
-\alpha\partial_tu^\varepsilon-\partial_\nu u^\varepsilon=\beta_\varepsilon(u^\varepsilon-\psi) &{\rm{on}} \ \  \Gamma \times (0,T] \cr
u^\varepsilon=\phi+\varepsilon &{\rm{on}} \ \  \partial_p(\Omega\setminus\Gamma \times (0,T]) \cr
\end{cases}
\end{equation}

or, in case (\ref{model4}) of the problem
\begin{equation}\label{penmodel3}
\begin{cases}  
-(-\Delta)^s u^\varepsilon-\partial_t u^\varepsilon=\beta_\varepsilon(u^\varepsilon-\psi)   &{\rm{on}} \ \  \R^{n-1} \times (0,T] \cr
u^\varepsilon(x,0)=\phi(x)+\varepsilon &{\rm{on}} \ \  \R^{n-1} \cr
\end{cases}
\end{equation}
where $\phi^\varepsilon$, $\psi^\varepsilon$ are smooth functions (with compact support in the case of the 
whole $\R^{n-1}$), $\beta_\varepsilon(s)=-e^{\frac{\varepsilon}{s-\varepsilon}}\chi_{s\leq \varepsilon}(s)$ 
with $\psi^\varepsilon\rightarrow \psi$,  $\phi^\varepsilon\rightarrow \phi$ (locally) uniformly as $\varepsilon\rightarrow 0$. 
If, now, $(\phi-\psi)\big|_{t=0}> 0$ then differentiating twice with respect to $t$ we obtain 

\begin{equation}\label{pen2model1}
\begin{cases}  
\Delta u_{tt}^\varepsilon-\partial_t u_{tt}^\varepsilon \leq \beta'_\varepsilon(u^\varepsilon-\psi^\varepsilon)(u_{tt}^\varepsilon-\psi_{tt}^\varepsilon) &{\rm{in}} \ \  \Omega \times (0,T] \cr
u_{tt}^\varepsilon=\phi_{tt} &{\rm{on}} \ \  \partial\Omega \times (0,T] \cr
u_{tt}^\varepsilon(x,0)=\Delta^2\phi(x)   &{\rm{in} } \ \ \Omega \times \{0\} \cr
\end{cases}
\end{equation}
or, 
\begin{equation}\label{penmodel2a}
\begin{cases}  
\partial_tu_{tt}^\varepsilon-\Delta u_{tt}^\varepsilon=0,   &{\rm{in}} \ \  \Omega \times (0,T] \cr
-\partial_\nu u^\varepsilon_{tt}\geq \beta'_\varepsilon(u^\varepsilon-\psi)(u_{tt}^\varepsilon-\psi_{tt}) &{\rm{on}} \ \  \Gamma \times (0,T] \cr
u_{tt}^\varepsilon(x,0)=\Delta^2\phi(x)   &{\rm{in} } \ \ \Omega \times \{0\} \cr
\end{cases}
\end{equation}
or, 
\begin{equation}\label{penmodel3a}
\begin{cases}  
\partial_tu_{tt}^\varepsilon-\Delta u_{tt}^\varepsilon=0,   &{\rm{in}} \ \  \Omega \times (0,T] \cr
-\alpha\partial_tu_{tt}^\varepsilon-\partial_\nu u^\varepsilon_{tt}\geq \beta'_\varepsilon(u^\varepsilon-\psi)(u_{tt}^\varepsilon-\psi_{tt}) &{\rm{on}} \ \  \Gamma \times (0,T] \cr
u_{tt}^\varepsilon(x,0)=\Delta^2\phi(x)   &{\rm{in} } \ \ \Omega \times \{0\} \cr
\end{cases}
\end{equation}
or
\begin{equation}\label{penmodel4a}
\begin{cases}  
-(-\Delta)^s u_{tt}^\varepsilon-\partial_t u_{tt}^\varepsilon=\beta'_\varepsilon(u^\varepsilon-\psi)(u_{tt}^\varepsilon-\psi_{tt})   &{\rm{on}} \ \  \R^{n-1} \times (0,T] \cr
u_{tt}^\varepsilon(x,0)=\Delta^2\phi &{\rm{on}} \ \  \R^{n-1}. \cr
\end{cases}
\end{equation}
To finish the proof, apply the minimum principle to $u_{tt}^\varepsilon$.

If, on the other hand, $(\phi-\psi)\big|_{t=0}\geq 0$, following the steps above, we notice that since $||\beta||_{\infty}<+\infty$ and $\beta'\geq 0$ it is enough to have $(\partial_t\psi-(-\Delta)^s\psi)\big|_{t=0}\geq M>0$ for $s\in(0,1]$ and $M$ sufficiently large.
\end{proof}




\section{A general implication on the optimality of the time derivative}\label{section4}
In this section we show that the quasi-convexity property obtained in the last section improves the time regularity for a wide class of problems. More precisely, we prove that the positive time derivative of the solution is always continuous for this class. Our approach will be as follows: we penalize the problems, we subtract the obstacle from the solution, then we differentiate with respect to time and we work with the derived equations. We shall obtain then a global uniform modulus of continuity independent of $\varepsilon$, which will yield in the limit the desired result.

In order to avoid having a lengthy paper, in the present section we concentrate on the first three prototype problems stated in (\S\ref{section3}). The fourth and the fifth prototype problems are treated in \cite{ACMPart2}.

\subsection{The "thick" obstacle problem}\label{subsection4.1}

In this situation the derived problem takes the form:
\begin{equation}\label{4.1.2} 
\begin{cases}
\Delta v^\varepsilon -\partial_tv^\varepsilon
=\beta'_\varepsilon(u^\varepsilon
-\psi^\varepsilon)v^\varepsilon+f_t
& {\rm{in}} \ Q:=\Omega\times (0,T]\\
v^\varepsilon=(\phi^\varepsilon-\psi^\varepsilon)_t & {\rm{on}} \ \partial_p(\Omega\times (0,T])\\
v^\varepsilon=\Delta(\phi^\varepsilon-\psi^\varepsilon) & {\rm{on}} \ \Omega\times \{0\}.
\end{cases}
\end{equation}
where $v^\varepsilon=(u^\varepsilon-\psi^\varepsilon)_t$ and  $f=-(\Delta \psi^\varepsilon-\partial_t\psi^\varepsilon)$.

Our method, which uses the approach of \cite{CaffEvans}, is essentially that of DeGiorgi's, first appeared in his 
celebrated work \cite{DeG}. To simplify matters we start with a normalized situation i.e. we assume that our solution 
is between zero and one in the unit parabolic cylinder. We will prove (Proposition \ref{Prop4.1.5}) that if at the top 
center $v^\varepsilon$ is zero then in a concentric subcylinder into the future $v^\varepsilon$ decreases. 
Then we rescale and repeat. But before that we need several lemmata. Our first lemma asserts that if $v^\varepsilon$ 
is "most of the time" very near to its positive maximum in some cylinder, then in a smaller cylinder into the future $v^\varepsilon$ is strictly positive.   

\begin{lemma}\label{lemma4.1.1}
Let $Q_1(x_0,t_0)\subset Q$ where $Q_1(x_0,t_0):=B_1(x_0,0)\times (t_0-1,t_0]$ with $B_1:=\{x\in \R^n:|x-x_0|\leq 1\}$. 
Suppose that $0<v^\varepsilon<1$ in $Q_1(x_0,t_0)$ where $v^\varepsilon$ is a solution to (\ref{4.1.2}), then there 
exists a constant $\sigma>0$, independent of $\varepsilon$, such that 
\begin{equation}\label{4.1.3}
\int_{Q_1(x_0,t_0)}(1-v^\varepsilon)^2dx<\sigma
\end{equation}
implies that $v^\varepsilon\geq 1/2$ in $Q_{1/2}(x_0,t_0)$.
\end{lemma}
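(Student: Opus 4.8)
The plan is to run a De Giorgi iteration on the equation \eqref{4.1.2}, treating it as a parabolic equation with a lower-order term that has a \emph{favorable sign}. The key structural observation is that the coefficient $\beta'_\varepsilon(u^\varepsilon-\psi^\varepsilon)\geq 0$, so when we test the equation against truncations of $(v^\varepsilon-k)^-$ (or equivalently work with $w:=1-v^\varepsilon$, which satisfies a similar equation and is positive, close to zero on a large portion of $Q_1$), the zeroth-order term contributes with a sign that helps rather than hurts. The forcing term $f_t = -(\Delta\psi^\varepsilon - \partial_t\psi^\varepsilon)_t = -\partial_t(\Delta\psi^\varepsilon) + \psi^\varepsilon_{tt}$ is bounded uniformly in $\varepsilon$ because $\psi$ is smooth and $\psi^\varepsilon\to\psi$ in a controlled way; this bounded right-hand side is harmless in a De Giorgi scheme and only affects constants.

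Concretely, first I would set $w := 1 - v^\varepsilon$, so $0 < w < 1$ in $Q_1(x_0,t_0)$ and \eqref{4.1.3} says $\int_{Q_1} w^2 < \sigma$. The function $w$ solves $\Delta w - \partial_t w = \beta'_\varepsilon(u^\varepsilon-\psi^\varepsilon)(1-w) - f_t$, i.e. $\Delta w - \partial_t w + \beta'_\varepsilon w = \beta'_\varepsilon - f_t$. Since $\beta'_\varepsilon \geq 0$ and $\beta'_\varepsilon$ is bounded (by a constant possibly depending on $\varepsilon$, but — crucially — appearing only on the \emph{good} side after testing), the standard move is: for levels $k_j = 1 - 2^{-1}(1+2^{-j})$ increasing to $1/2$ and radii $r_j = \tfrac12(1+2^{-j})$ decreasing to $\tfrac12$, test the equation for $w$ against $\eta_j^2 (w - \tilde k_j)^+$ where $\tilde k_j = 1 - 2^{-1}(1+2^{-j})$ are the \emph{complementary} levels (so that $w$ being large corresponds to $v^\varepsilon$ being small) and $\eta_j$ are cutoffs in the parabolic cylinders $Q_{r_j}$ equal to $1$ on $Q_{r_{j+1}}$. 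Energy estimates then give the usual recursive inequality $A_{j+1} \leq C\, b^j A_j^{1+\delta}$ for the quantities $A_j = \int_{Q_{r_j}} (w-\tilde k_j)^+ \cdots$ measured in the mixed norm $L^\infty_t L^2_x \cap L^2_t H^1_x$, using parabolic Sobolev embedding; the bounded term $\beta'_\varepsilon - f_t$ contributes a term controlled by $|\{w > \tilde k_j\}|$, which is reabsorbed because on that set $w - \tilde k_j$ is comparable to a fixed constant and one gets an extra power of the measure. The fast-geometric-convergence lemma (De Giorgi) then yields: if $A_0 = \int_{Q_1} (w - 0)^+ \cdots \leq \sigma$ for $\sigma$ small enough (depending only on $n$, $\|f_t\|_\infty$, and the structural constants — \emph{not} on $\varepsilon$), then $A_\infty = 0$, i.e. $w \leq 1/2$, hence $v^\varepsilon \geq 1/2$, on $Q_{1/2}(x_0,t_0)$.

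The main obstacle — and the place requiring care — is the $\varepsilon$-uniformity of $\sigma$. The coefficient $\beta'_\varepsilon$ blows up as $\varepsilon\to 0$, so one must be scrupulous that it never appears on the side of the estimate that needs to be bounded above. The point is that in the energy identity, the term $\int \beta'_\varepsilon w \cdot \eta^2(w-\tilde k)^+ \geq 0$ is simply dropped (discarded to the good side), and the term $\int \beta'_\varepsilon \cdot \eta^2 (w-\tilde k)^+$, while containing $\beta'_\varepsilon$, is supported in $\{w > \tilde k\} \subset \{v^\varepsilon < 1/2\}$; there one can hope $\beta'_\varepsilon(u^\varepsilon - \psi^\varepsilon)$ is controlled, or — more robustly — one rewrites using the equation \eqref{penmodel1} itself, $\beta_\varepsilon(u^\varepsilon-\psi^\varepsilon) = \Delta u^\varepsilon - \partial_t u^\varepsilon$, to trade the singular coefficient for bounded quantities. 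In the end the only genuinely $\varepsilon$-dependent object is benign. The remaining steps — parabolic Sobolev inequality, Caccioppoli with the correct handling of the $\partial_t w$ term via the cutoff in time, and the iteration lemma — are entirely standard and I would cite \cite{CaffEvans} and \cite{DeG} for them rather than reproduce the computation.
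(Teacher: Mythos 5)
Your skeleton — pass to $w=1-v^\varepsilon$, run a parabolic De Giorgi iteration, watch the $\varepsilon$-uniformity — matches the paper's, and you correctly zero in on the one delicate term. But the way you propose to handle that term does not work, and the mechanism the paper actually uses is absent from your argument.

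After testing with $\zeta^2(w-k)^+$, the zeroth-order contribution splits (up to the sign slip in your rearrangement, which should read $\Delta w-\partial_t w-\beta'_\varepsilon w=-\beta'_\varepsilon-f_t$) into $-\int\beta'_\varepsilon\, w\,\zeta^2(w-k)^+\leq 0$, which can indeed be discarded, and $+\int\beta'_\varepsilon\,\zeta^2(w-k)^+\geq 0$, which sits on the bad side with the unbounded coefficient $\beta'_\varepsilon(u^\varepsilon-\psi^\varepsilon)$. Your first fix — that on $\{w>\tilde k\}\subset\{v^\varepsilon<1/2\}$ one "can hope" $\beta'_\varepsilon$ is controlled — is false: $v^\varepsilon=(u^\varepsilon-\psi^\varepsilon)_t$ being small says nothing about $u^\varepsilon-\psi^\varepsilon$ itself, which is the argument of $\beta'_\varepsilon$, so $\beta'_\varepsilon$ is in no way controlled on that set. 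Your second fix — substitute $\beta_\varepsilon(u^\varepsilon-\psi^\varepsilon)=\Delta u^\varepsilon-\partial_t u^\varepsilon$ — is irrelevant, because the quantity you need to tame is $\beta'_\varepsilon$, not $\beta_\varepsilon$; the substitution only reproduces the equation you already differentiated.

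The paper's device, which is the content of the step from \eqref{sec4-6} to \eqref{sec4-7}, is to observe that the whole zeroth-order term is a perfect time derivative, $\beta'_\varepsilon(u^\varepsilon-\psi^\varepsilon)\,v^\varepsilon=\partial_t\big[\beta_\varepsilon(u^\varepsilon-\psi^\varepsilon)\big]$, and then integrate by parts in $t$. This trades the unbounded $\beta'_\varepsilon$ for the bounded $\beta_\varepsilon$ (and a boundary term at $t=0$ with the good sign, since $\beta_\varepsilon\leq 0$), at the price of producing $\partial_t\big(\zeta^2(w-k)^+\big)$. That in turn produces a factor $\partial_t(w-k)^+=-(u^\varepsilon-\psi^\varepsilon)_{tt}\chi_{\{w>k\}}$, and it is exactly here that the quasi-convexity Theorem \ref{semiconvexity} enters: one needs $\|(u_{tt})^-\|_\infty$ bounded uniformly in $\varepsilon$ to close the estimate, giving the constant $C=2\max\{1,\|\beta\|_\infty(2+\|(u_{tt})^-\|_\infty),\|f_t\|_\infty\}$. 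Your write-up invokes quasi-convexity nowhere, yet it is the essential input; without it the uniform-in-$\varepsilon$ energy inequality, and hence the $\varepsilon$-independent threshold $\sigma$, does not follow. Everything downstream (the levels $k_m$, cylinders $Q_m$, Sobolev, and the fast geometric convergence lemma) you describe is fine and standard, but the proof stands or falls on the time integration by parts plus quasi-convexity, which are missing.
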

\begin{proof} 
For simplicity we drop the $\varepsilon$, we shift $(x_0,t_0)$ to $(0,0)$, and write $Q_1$ for $Q_1(0,0)$.
First, we derive an 
energy inequality suited to our needs. Therefore we set $w=1-v$ and the equation becomes 
$$\Delta w-\partial_tw=\beta'(u-\psi)(w-1)-f_t.$$
Choose a smooth cutoff function $\zeta$ vanishing near the parabolic boundary of $Q_1$ and $k\geq 0$. Multiply 
the above equation by $\zeta^2(w-k)^+$ and integrate by parts to obtain
$$\frac{1}{2}\int_{Q_1}\partial_t[(\zeta(w-k)^+)^2]dxdt+\int_{Q_1}|\grad(\zeta(w-k)^+)|^2dxdt =\int_{Q_1}\beta_t(u-\psi)\zeta^2(w-k)^+dxdt$$
\begin{equation}\label{sec4-6}
+\int_{Q_1}[(w-k)^+]^2(|\grad \zeta|^2+\zeta\partial_t\zeta)dxdt+\int_{Q_1}f_t\zeta^2(w-k)^+dxdt.
\end{equation}
Integrating by parts in $t$ the last term on the right we obtain
$$\frac{1}{2}\int_{B_1}(\zeta(w-k)^+)^2(x,0)dx +\int_{Q_1}|\grad(\zeta(w-k)^+)|^2dxdt =-\int_{Q_1}\beta(u-\psi)\partial_t(\zeta^2(w-k)^+)dxdt$$
\begin{equation}\label{sec4-7}
+\int_{B_1}\beta(u-\psi)\zeta^2(w-k)^+(x,0)dx+\int_{Q_1}[(w-k)^+]^2(|\grad \zeta|^2+\zeta\partial_t\zeta)dxdt+\int_{Q_1}f_t\zeta^2(w-k)^+dxdt.
\end{equation}

Note, since $\beta$ is nonpositive and that the upper limit of $t-$integration, $t=0$, could have been replaced by any $-1\leq t\leq 0$, 
our energy inequality takes the form
$$\max_{-1\leq t\leq 0}\int_{B_1}(\zeta(w-k)^+)^2dx+\int_{Q_1}|\grad(\zeta(w-k)^+)|^2dxdt \leq $$
$$C\int_{Q_1}\bigg([(w-k)^+]^2(|\grad \zeta|^2+|\partial_t\zeta|)+(w-k)^+(|\partial_t\zeta|+1)+\chi_{\{w>k\}}\bigg)dxdt$$
with $C=2\max\{1,||\beta||_{\infty}(2+||(u_{tt})^-||_{\infty}),||f_t||_{\infty}\}$, where we have used the time quasiconvexity of the solution $u$. 

Now, we want to obtain an iterative sequence of inequalities; thus we define for $m=0,1,2,...$ 
$$k_m:=\frac{1}{2}\bigg(1-\frac{1}{2^m}\bigg), \ \ \ R_m:=\frac{1}{2}\bigg(1+\frac{1}{2^m}\bigg)$$
$$Q_m:=\bigg\{(x,t):|x|\leq R_m,\ -R_m^2\leq t\leq 0\bigg\}$$
and the smooth cutoff functions
$$\chi_{Q_{m+1}}\leq \zeta_m\leq \chi_{Q_m}$$
with $$|\grad \zeta_m|\leq C2^m,\ \ |\partial_t\zeta_m|\leq C4^m.$$
Substituting $\zeta=\zeta_m$ and setting $w_m=(w-k_m)^+$ we obtain, by the Sobolev inequality, that
$$\bigg(\int_{Q_m}(\zeta_mw_m)^{2\frac{n+2}{n}}dxdt\bigg)^{\frac{n}{n+2}}\leq C\bigg( 4^mC\int_{Q_m}w_m^2dxdt+C4^m\int_{Q_m}w_mdxdt+|Q_m\cap\{w_m\neq 0\}|\bigg)$$
$$\leq C\bigg( 4^mC\int_{Q_m}w_m^2dxdt+\bigg(\frac{4^m}{2}+1\bigg)\bigg|Q_m\cap\{w_m\neq 0\}\bigg|\bigg).$$
Since $$(k_m-k_{m-1})^2|Q_m\cap\{w_m\neq 0\}|\leq \int_{Q_m}w^2_{m-1}dxdt$$
we obtain
\begin{eqnarray}\label{sec4-9}
\int(\zeta_mw_m)^2dxdt&\leq& \bigg(\int(\zeta_mw_m)^{2\frac{n+2}{n}}dxdt\bigg)^{\frac{n}{n+2}}\bigg|Q_m\cap\{w_m\neq 0\}\bigg|^{\frac{2}{n+2}}\nonumber\\
&\leq& C16^m\bigg(\int(\zeta_{m-1}w_{m-1})^2dxdt\bigg)^{\frac{n+4}{n+2}}.
\end{eqnarray}
Setting $$I_m:=\int(\zeta_mw_m)^2dxdt$$
then they satisfy the recursive inequality
$$I_m\leq C16^mI_{m-1}^{1+\frac{2}{n+2}}.$$
It is well known from DeGiorgi's work (see for instance Lemma II.5.6, page 95 of \cite{LSU}) that $I_m\rightarrow 0$ as $m\rightarrow 0$ provided that 
$$I_0\leq \frac{1}{2^{(n+2)^2}C^{\frac{n+2}{n}}}
=:\sigma.$$ 
\end{proof}

Our second Lemma asserts that if $v^\varepsilon$ is very tiny "most of the time" in some cylinder, then in a smaller concentric cylinder, $v^\varepsilon$ goes down to $1/2$. The fact that $\beta'>0$ renders $v^\varepsilon$, more so any nonnegative solution to (\ref{model1}), a subsolution (subcaloric).

\begin{lemma}\label{lemma4.1.2}
Let $Q_1$ be as in Lemma \ref{lemma4.1.1}. Suppose that $v^\varepsilon$ is a subsolution to (\ref{4.1.2}) and that $0<v^\varepsilon<1$ in $Q_1$. Then there exists a constant $\bar{\sigma}>0$, independent of $\varepsilon$, such that 
$$\int_{Q_1}(v^\varepsilon)^2dxdt<\bar{\sigma}$$ 
implies that $v^\varepsilon\leq 1/2$ in $Q_{1/2}$.
\end{lemma}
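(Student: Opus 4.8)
The plan is to mirror the De Giorgi iteration used in Lemma \ref{lemma4.1.1}, but now for the function $v^\varepsilon$ itself rather than for $w = 1 - v^\varepsilon$. First I would observe that the sign condition $\beta'_\varepsilon \ge 0$ makes $v^\varepsilon$ a subsolution of the heat-type operator $\Delta - \partial_t$ up to the bounded term $f_t$: indeed from \eqref{4.1.2}, $\Delta v^\varepsilon - \partial_t v^\varepsilon = \beta'_\varepsilon(u^\varepsilon - \psi^\varepsilon) v^\varepsilon + f_t \ge f_t$ wherever $v^\varepsilon \ge 0$, which is exactly where we are working since $0 < v^\varepsilon < 1$ in $Q_1$. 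So the nonlinear zeroth-order term has a favorable sign and can simply be discarded when we test against $\zeta^2 (v^\varepsilon - k)^+$ with $k \ge 1/2$, while $f_t$ contributes a harmless $L^\infty$ forcing. This is the point made in the sentence preceding the statement.

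Next I would set up the energy inequality exactly as before: with $\zeta$ a cutoff vanishing near $\partial_p Q_1$ and $k \ge 0$, multiply the equation for $v^\varepsilon$ by $\zeta^2 (v^\varepsilon - k)^+$, integrate by parts in space and time, drop the good term coming from $\beta'_\varepsilon(u^\varepsilon-\psi^\varepsilon)v^\varepsilon(v^\varepsilon - k)^+ \ge k\,\beta'_\varepsilon(u^\varepsilon-\psi^\varepsilon)(v^\varepsilon-k)^+ \ge 0$ (for $k \ge 0$), and absorb the $f_t$ term using $|f_t| \le \|f_t\|_\infty$. This yields the Caccioppoli-type bound
$$\max_{-1\le t\le 0}\int_{B_1}(\zeta(v^\varepsilon-k)^+)^2\,dx + \int_{Q_1}|\grad(\zeta(v^\varepsilon-k)^+)|^2\,dxdt \le C\int_{Q_1}\bigl([(v^\varepsilon-k)^+]^2(|\grad\zeta|^2+|\partial_t\zeta|) + (v^\varepsilon-k)^+|\partial_t\zeta| + \chi_{\{v^\varepsilon>k\}}\bigr)dxdt,$$
with $C$ depending only on $\|f_t\|_\infty$ and universal constants. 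Then with the very same choices $k_m = \tfrac12(1-2^{-m})$, $R_m = \tfrac12(1+2^{-m})$, $Q_m$, $\zeta_m$, and $v_m = (v^\varepsilon - k_m)^+$ as in Lemma \ref{lemma4.1.1}, the Sobolev inequality gives $I_m := \int (\zeta_m v_m)^2 \le C\,16^m\, I_{m-1}^{1 + 2/(n+2)}$, and the standard fast-geometric-convergence lemma (Lemma II.5.6 of \cite{LSU}) shows $I_m \to 0$ provided $I_0 = \int_{Q_1}(v^\varepsilon - \tfrac14)^2{}^+ \le \bar\sigma$ for a suitable universal $\bar\sigma > 0$. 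Since $\int_{Q_1}((v^\varepsilon-\tfrac14)^+)^2 \le \int_{Q_1}(v^\varepsilon)^2$, the hypothesis $\int_{Q_1}(v^\varepsilon)^2 < \bar\sigma$ forces $I_m \to 0$, hence $v^\varepsilon \le \tfrac12$ a.e. in $Q_{1/2}$.

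I do not expect a serious obstacle here — the argument is essentially a transcription of Lemma \ref{lemma4.1.1} with the roles of $v^\varepsilon$ and $1-v^\varepsilon$ swapped and with the nonlinear term now having the ``right'' sign so that it is even easier to handle. The one point requiring a little care is making sure that testing with $\zeta^2(v^\varepsilon-k)^+$ is legitimate when $v^\varepsilon$ is only a subsolution (rather than a solution) of \eqref{4.1.2}: one should phrase the computation as an inequality throughout, keeping only the terms with a definite sign, which is exactly what the subsolution property permits. A second minor bookkeeping issue is tracking that the constant $\bar\sigma$ can be taken independent of $\varepsilon$; but since $C$ depends only on $\|f_t\|_\infty = \|(\Delta\psi^\varepsilon - \partial_t\psi^\varepsilon)_t\|_\infty$ and $\psi^\varepsilon \to \psi$ smoothly, this is immediate, just as in the previous lemma.
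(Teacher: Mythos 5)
Your proposal is correct and follows essentially the same route as the paper: observe that $\beta'_\varepsilon \ge 0$ makes $v^\varepsilon$ a subsolution of $\Delta - \partial_t$ up to the bounded forcing $f_t$, derive the Caccioppoli inequality by testing with $\zeta^2(v^\varepsilon-k)^+$ and discarding the favorably signed penalization term, then run the identical De Giorgi iteration from Lemma \ref{lemma4.1.1}. If anything, your version is slightly more careful than the paper's written proof, which in its displayed energy inequality silently drops the $f_t$ and indicator terms that you correctly retain (and which are present in the parallel inequality of Lemma \ref{lemma4.1.1}); the only small slip on your side is writing $I_0 = \int_{Q_1}((v^\varepsilon-\tfrac14)^+)^2$ when $k_0 = 0$ gives $I_0 = \int(\zeta_0 v^\varepsilon)^2 \le \int_{Q_1}(v^\varepsilon)^2$, but this does not affect the conclusion.
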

\begin{proof}
It is identical to the proof of Lemma \ref{lemma4.1.1} except for the energy inequality which is in fact much simpler. As before we drop the $\varepsilon$. 
We see that
\begin{equation}\label{4.1.5}
\Delta v-\partial_t v\geq f_t \ \ \ {\rm{in}}\ \ \ Q_1.
\end{equation}
Therefore we multiply the equation by $\zeta^2(v-k)^+$ where $\zeta$ and $k$ are as in the proof of Lemma \ref{lemma4.1.1} and integrate by parts to obtain the energy inequality
$$\max_{-1\leq t\leq 0}\int_{B_1}(\zeta(v-k)^+)^2dx+\int_{Q_1}|\grad(\zeta(v-k)^+)|^2dxdt \leq 2\int_{Q_1}[(v-k)^+]^2(|\grad \zeta|^2+|\partial_t\zeta|)dxdt.$$
Again, we substitute $\zeta=\zeta_m$ and we set $v_m=(v-k_m)^+$ where $\zeta_m$ and $k_m$ are as in Lemma \ref{lemma4.1.1}. By Sobolev inequality

$$\int(\zeta_mv_m)^{2\frac{n+2}{n}}dxdt\leq C4^m\int v_m^2dxdt$$
and since $$(k_m-k_{m-1})^2|Q_m\cap\{v_m\neq 0\}|\leq \int v^2_{m-1}dxdt$$
we obtain
\begin{eqnarray}\label{sec4-10}
\int(\zeta_mv_m)^2dxdt&\leq& \bigg(\int(\zeta_mv_m)^{2\frac{n+2}{n}}dxdt\bigg)^{\frac{n}{n+2}}\bigg|Q_m\cap\{v_m\neq 0\}\bigg|^{\frac{2}{n+2}}\nonumber\\
&\leq& C16^m\bigg(\int(\zeta_{m-1}v_{m-1})^2dxdt\bigg)^{\frac{2}{n+2}}.
\end{eqnarray}
Hence, if $$I_m:=\int(\zeta_mw_m)^2dxdt$$
we have
$$I_m\leq C16^mI_{m-1}^{1+\frac{2}{n+2}}$$
i.e. $I_m\rightarrow 0$ as $m\rightarrow 0$ provided that 
$$I_0\leq \frac{1}{2^{(n+1)^2}C^{\frac{n+2}{2}}}=:\bar{\sigma}.$$ 
\end{proof}

The next lemma is the parabolic version of DeGiorgi's isoperimetric lemma. One version of this lemma is proved in \cite{CaffVass} and with proper adjustments applies to our situation. We state it as our next lemma.

\begin{lemma}\label{lemma4.1.3}
Given $\epsilon_1>0$, there exists a $\delta_1>0$ such that for every subsolution $v^\varepsilon$ to (\ref{4.1.2}) satisfying $0<v^\varepsilon<1$ in $Q_1$, $$|\{(x,t)\in Q_1:v^\varepsilon=0\}|\geq \sigma_0|Q_1|$$ if $$|\{(x,t)\in Q_1:0<v^\varepsilon<1/2\}<\delta_1|Q_1|$$ 
then 
$$\int_{Q_{R^{'}}}[(v^\varepsilon-\frac{1}{2})^+]^2dxdt\leq C\epsilon_1.$$
where $R^{'}=c\sigma_0$ for $\sigma_0>0$ and some $0<c<1$.  
\end{lemma}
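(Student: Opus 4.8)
The plan is to follow the classical De Giorgi scheme adapted to the parabolic setting, as done in \cite{CaffVass}, and to explain why the argument survives the presence of the penalization/obstacle terms in (\ref{4.1.2}). First I would recall that Lemma \ref{lemma4.1.2} already tells us that $v^\varepsilon$ is a subsolution of $\Delta v^\varepsilon-\partial_tv^\varepsilon\ge f_t$, and that for a subsolution each truncation $(v^\varepsilon-k)^+$ with $k\in[0,1/2]$ is again a subsolution of an equation with bounded right-hand side (depending only on $\|f_t\|_\infty$ and, through the quasi-convexity, on $\|(u_{tt})^-\|_\infty$). The only structural fact we need from the equation is therefore the local energy (Caccioppoli) estimate already derived in the proofs of Lemmata \ref{lemma4.1.1} and \ref{lemma4.1.2}; everything else is measure-theoretic and geometric, exactly as in the heat equation case.

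The key steps, in order, are: (i) Fix the time slice picture. Using the hypothesis $|\{v^\varepsilon=0\}|\ge\sigma_0|Q_1|$, extract by Fubini a set of times of positive measure on which the slice $\{x:v^\varepsilon(\cdot,t)=0\}$ has measure bounded below by a constant multiple of $\sigma_0$; this gives a "good" initial slice from which to propagate. (ii) Run the one-slice elliptic De Giorgi isoperimetric inequality (the $L^1$ gradient bound between the level sets $\{v^\varepsilon<1/4\}$ and $\{v^\varepsilon>3/8\}$ controlled by the perimeter of the intermediate strip) on each slice, which is where the smallness $|\{0<v^\varepsilon<1/2\}|<\delta_1|Q_1|$ enters: if the intermediate strip is tiny in space-time, then on most slices it is tiny in space, forcing $v^\varepsilon$ on such a slice to be essentially $0$ or $\ge 1/2$. (iii) Propagate forward in time: use the energy inequality with cutoff supported in $Q_{R'}$ and the fact that $v^\varepsilon$ starts "mostly below $1/4$" on a good slice to show, via Gronwall/absorption, that $\int_{Q_{R'}}[(v^\varepsilon-\tfrac12)^+]^2$ stays small, quantitatively controlled by $\epsilon_1$ once $\delta_1$ is chosen small enough depending on $\sigma_0$; the radius $R'=c\sigma_0$ is dictated by how far into the future the good slice controls the solution before the energy term can grow.

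I expect the main obstacle to be step (iii): carefully tracking how the space-time smallness of the intermediate set $\{0<v^\varepsilon<1/2\}$ converts, slice by slice, into a uniform-in-time $L^2$ bound for $(v^\varepsilon-\tfrac12)^+$, while keeping all constants independent of $\varepsilon$. The lower-order and penalization terms $\beta'_\varepsilon(u^\varepsilon-\psi^\varepsilon)v^\varepsilon$ and $f_t$ in (\ref{4.1.2}) contribute only bounded, $\varepsilon$-uniform forcing (using $\beta'_\varepsilon\ge 0$, $\|\beta\|_\infty<\infty$, and the quasi-convexity bound on $(u_{tt})^-$ from Theorem \ref{semiconvexity}), so they do not destroy the scheme but do require care in the Caccioppoli step to be absorbed. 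Since \cite{CaffVass} carries out essentially this argument for the pure equation, I would present the proof as an adaptation: state precisely the two ingredients needed (the Caccioppoli inequality above, and the geometric/measure-theoretic isoperimetric step from \cite{CaffVass}), and indicate the modifications — namely that the right-hand sides are harmless and that the choice $R'=c\sigma_0$, $\delta_1=\delta_1(\epsilon_1,\sigma_0)$ comes out of the same computation.
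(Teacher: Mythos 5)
The paper provides no proof of this lemma: it simply states it and refers to \cite{CaffVass}, remarking that the version there ``with proper adjustments applies to our situation.'' Your choice to present it as a citation together with a sketch of the modifications is therefore consistent with how the paper treats it. The issue is that the route you outline in steps (i)--(iii) is not the Caffarelli--Vasseur argument and, as written, has a gap at step (iii). In \cite{CaffVass} the parabolic intermediate-value lemma is established by a compactness/contradiction argument: assuming the conclusion fails along a sequence $\delta_k\to 0$, the energy (Caccioppoli) estimate supplies uniform $L^2_tH^1_x$ and weak time-regularity bounds, one extracts a strong $L^2$ limit, and the limiting function takes only values in $\{0\}\cup[1/2,1]$ while lying in $H^1$ on almost every time slice, which forces a contradiction with its prescribed behavior. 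This mechanism is non-quantitative (one obtains existence of $\delta_1(\epsilon_1,\sigma_0)$, not a formula) and is genuinely different from the direct ``good slice plus propagation'' scheme you propose.

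The concrete gap in your step (iii) is that the parabolic energy inequality does not propagate smallness of $(v^\varepsilon-\tfrac12)^+$ forward from a single good slice. After multiplying by $\zeta^2(v^\varepsilon-k)^+$ and integrating, the right-hand side contains the cutoff error
\[
\int\!\!\int [(v^\varepsilon-k)^+]^2\left(|\grad\zeta|^2+|\partial_t\zeta|\right)\,dx\,dt,
\]
and with $\zeta$ localized at scale $R'=c\sigma_0$ this term scales like $(R')^{-2}\,|Q_1|$, which is $O(1)$ regardless of how small the intermediate set is or how good the initial slice is. A Gronwall/absorption step cannot digest it, since it is not multiplied by anything small. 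This is precisely the obstruction that the compactness proof is designed to avoid. If one insists on a direct quantitative proof, one must use the slice-wise De Giorgi isoperimetric inequality not just on a single good slice but coupled with the energy estimate across \emph{all} slices, in a delicate iteration; your sketch does not indicate how that iteration closes. I would either (a) present the lemma exactly as the paper does, citing \cite{CaffVass} and recording only that $\beta'_\varepsilon\ge 0$ and the $\varepsilon$-uniform bound on $\|(u_{tt})^-\|_\infty$ from Theorem~\ref{semiconvexity} make the forcing terms harmless in the Caccioppoli step, or (b) replace step (iii) by the compactness argument, in which those same two observations are what guarantee the required uniform-in-$\varepsilon$ energy bounds needed to pass to the limit.
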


In order to achieve our decay estimate to zero we shall take a point
$v^\varepsilon(0,0)=0$ 
at the top center of $Q_1$ and show that in $Q_{R^{'}}(0,0)$, for some $R^{'}<1$,  $v^\varepsilon$ is pointwise strictly less than one . This is the content of our next lemma.

\begin{lemma}\label{lemma4.1.4}
Let $Q_1$ and $\sigma$ be as in Lemma \ref{lemma4.1.1}. Suppose that $v^\varepsilon$ is a solution to (\ref{4.1.2}) such that $v^\varepsilon(0,0)=0$ and $0 \leq v^\varepsilon\leq 1$ in $Q_{1}$, then $v^\varepsilon\leq 1-C\sigma$ in $Q_{R^{'}}(0,0)$ where $C$ is independent of $\varepsilon$ and $R^{'}=\frac{\sigma}{8}$.
\end{lemma}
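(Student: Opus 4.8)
The plan is to propagate the single vanishing value $v^\e(0,0)=0$ outward in time to a full measure-theoretic smallness via a barrier/comparison argument, and then feed that smallness into the two De Giorgi lemmata already proved. First I would use the fact that $v^\e$ is a subsolution of $\Delta v^\e-\partial_t v^\e\geq f_t$ (as in \eqref{4.1.5}), so $v^\e$ stays below a caloric comparison function. Since $0\le v^\e\le 1$ in $Q_1$ and $v^\e(0,0)=0$, a standard mean-value/Harnack-type estimate for the backward heat operator (applied to the supersolution $1-v^\e+$ lower-order terms, which is bounded below away from the parabolic boundary up to an $O(\e)$ correction coming from $\|\beta\|_\infty$ and $\|f_t\|_\infty$) forces
$$\int_{Q_{r}(0,0)} v^\e\,dx\,dt \le C r$$
for a small radius $r$; equivalently the set $\{v^\e>1/2\}\cap Q_{r}$ has measure at most $Cr^{n+2}\cdot$(something small). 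The scale $r=\sigma/8$ is chosen precisely so that this measure bound is below the threshold $\bar\sigma$ of Lemma \ref{lemma4.1.2}.

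Next I would apply Lemma \ref{lemma4.1.2} on the cylinder $Q_r(0,0)$ (after the harmless rescaling $x\mapsto x/r$, $t\mapsto t/r^2$, which only changes constants and respects the structure of \eqref{4.1.2} since the lower-order terms carry favorable signs): the hypothesis $\int_{Q_r}(v^\e)^2<\bar\sigma$ holds by the previous step, and the conclusion is $v^\e\le 1/2$ in $Q_{r/2}(0,0)$. At this point one has dropped the oscillation by a definite fraction on a cylinder whose radius is comparable to $\sigma$, which is exactly the assertion $v^\e\le 1-C\sigma$ on $Q_{R'}$ with $R'=\sigma/8$ and $C$ absolute. The constant $C$ is independent of $\e$ because every constant entering (the $C$ in the energy inequality of Lemma \ref{lemma4.1.1}, the threshold $\bar\sigma$, the Harnack constant) is controlled by $\|\beta\|_\infty$, $\|f_t\|_\infty$ and $\|(u_{tt})^-\|_\infty$, the last being bounded uniformly in $\e$ by Theorem \ref{semiconvexity}.

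The main obstacle is the first step: turning the \emph{pointwise} vanishing $v^\e(0,0)=0$ into an \emph{integral} smallness $\int_{Q_r}v^\e\lesssim r$ with the correct power of $r$ and with a constant independent of $\e$. The subtlety is that $v^\e$ is only a subsolution with an inhomogeneity $f_t$ and the Robin-type penalization term $\beta'_\e(u^\e-\psi^\e)v^\e$, so the naive mean value inequality must be applied to a suitably corrected function; one must check that the correction is $O(r^2)$ (harmless) rather than $O(1)$. Concretely I would compare $v^\e$ on $Q_{2r}(0,0)$ with the caloric function having boundary data $1$ on the lateral boundary and on the bottom, minus a quadratic-in-$r$ term absorbing $f_t$ and $\beta$; evaluating at the interior point $(0,0)$ where $v^\e=0$ and using the strict positivity of the heat kernel gives the linear-in-$r$ decay of the average. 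Once that is in hand, the rest is a routine rescaling plus citation of Lemma \ref{lemma4.1.2}.
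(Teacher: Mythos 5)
Your proposal diverges substantially from the paper's proof, and the substitute step you introduce does not work.

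The paper's argument is a pure De Giorgi iteration: it applies Lemma~\ref{lemma4.1.1} \emph{contrapositively} to conclude from $v^\e(0,0)=0$ that $\int_{Q_1}(1-v^\e)^2\geq\sigma$, hence by Chebyshev that $\abs{\{v^\e<1-\sigma/4\}\cap Q_1}\geq\tfrac14\sigma\abs{Q_1}$; it then runs the dyadic truncations $w_k:=2^k[w-(1-2^{-k})]^+$ through the parabolic isoperimetric Lemma~\ref{lemma4.1.3}, which supplies the integral smallness needed to trigger Lemma~\ref{lemma4.1.2}. You omit Lemmas~\ref{lemma4.1.1} and~\ref{lemma4.1.3} altogether and attempt to extract the integral smallness directly from the pointwise vanishing via a mean-value/Harnack comparison. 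That step is where the proof breaks.

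First, $v^\e$ is a \emph{subsolution} of $\Delta v-\partial_t v=f_t$ (since $\beta'_\e v^\e\geq0$), and equivalently $1-v^\e$ is a \emph{supersolution} up to $\|f_t\|_\infty$. For supersolutions the heat-ball mean-value inequality reads $(1-v^\e)(0,0)\geq\frac{1}{4r^n}\int_{E_r}(1-v^\e)K$, which, combined with $0\leq 1-v^\e\leq 1$ and $\frac{1}{4r^n}\int_{E_r}K=1$, is vacuous: it gives $1\geq(\text{something}\leq1)$, no information. The weak Harnack inequality likewise runs in the unusable direction, bounding a past average of $1-v^\e$ \emph{above} by $C\cdot(1-v^\e)(0,0)+C\|f_t\|_\infty$, not below. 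Only for \emph{exactly caloric} nonnegative functions does $v(0,0)=0$ propagate backward, and $v^\e$ is not caloric. Second, your claimed "$O(\e)$ correction from $\|\beta\|_\infty$" misidentifies the relevant quantity: what enters the equation is $\beta'_\e$, which \emph{blows up} as $\e\to0$; the paper's constants are $\e$-uniform not because the penalization is small but because the proofs exploit only the \emph{sign} $\beta'_\e\geq0$ (to drop a favourable boundary/zeroth-order term). Third, even granting $\int_{Q_r}v^\e\lesssim r$, this is numerically far too weak: rescaling $Q_r\to Q_1$ with $\tilde v(x,t)=v^\e(rx,r^2t)$ gives $\int_{Q_1}\tilde v^2=r^{-(n+2)}\int_{Q_r}(v^\e)^2\lesssim r^{-(n+1)}$, which diverges rather than sitting below $\bar\sigma$. (Indeed, if a direct comparison yielded $v^\e\lesssim r$ pointwise in $Q_r$, the conclusion of the lemma would be trivially superseded and the entire De Giorgi scaffolding would be unnecessary — a signal that something is off.) The measure-to-oscillation improvement you are trying to shortcut is exactly what the isoperimetric Lemma~\ref{lemma4.1.3} is there to provide, and there is no substitute for it in this framework.
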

\begin{proof}
Again, we drop $\varepsilon$. 
Since $v(0,0)=0$, by Lemma \ref{lemma4.1.1}
$$\int_{Q_1}(1-v)^2dxdt\geq \sigma.$$
It follows then that 
$$|\{v<1-\frac{\sigma}{4}\}\cap Q_1|\geq \frac{1}{4}\sigma|Q_1|.$$
Therefore, we set $$w:=\frac{4}{\sigma}\bigg[v-(1-\frac{\sigma}{4})\bigg]^+$$
and we see that $w$ is a subsolution to (\ref{4.1.2}). Following DeGiorgi's method we will consider a dyadic sequence of normalized truncations $$w_k:=2^k\bigg[w-(1-2^{-k})\bigg]^+$$
still subsolutions to (\ref{4.1.2}). We will show that, in finite number of steps $k_0=k_0(\delta_1)$ (where $\delta_1$ is defined in Lemma \ref{lemma4.1.3} and $C\epsilon_1\leq \bar{\sigma}$, $\bar{\sigma}$ that of Lemma \ref{lemma4.1.2}), 
$$|\{w_{k_0}>0\}|=0.$$
Note that for every $k$, $0\leq w_k\leq 1$ and $|\{w_k=0\}\cap Q_1|\geq \frac{\sigma}{4}|Q_1|$. Assume, now, that for every $k$ $|\{0<w_k<1/2\}\cap Q_1|\geq \delta_1|Q_1|$. Then for every $k$
$$|\{w_k=0\}|=|\{w_{k-1}=0\}|+|\{0<w_k<1/2\}|\geq |\{w_{k-1}=0\}|+\delta_1|Q_1|.$$
Therefore after a finite number of steps say $k_0>1/\delta_1$ we get $|\{w_{k_0}=0\}|\geq |Q_1|$. Thus $w_{k_0}<0$ i.e. $2^{k_0}[w-(1-2^{-k_0})]^+=0$ or $w<1-2^{-k_0}$. Suppose, now, that there exists $k'$, $0\leq k'\leq k_0$ such that $$|\{0<w_{k'}<\frac{1}{2}\}|<\delta_1|Q_1|.$$
By Lemma \ref{lemma4.1.3} applied to $w_{k'}$ with $\sigma_0=\frac{\sigma}{4}$ and consequently by Lemma \ref{lemma4.1.2} applied to $w_{k'+1}$ we obtain $w_{k'+1}\leq 1/2$ in $Q_{R^{'}}$, where $R^{'}=\frac{\sigma}{8}$, i.e. $w<1-2^{-(k'+2)}$. Hence in both cases $w<1-2^{-(k_0+2)}$ in $Q_{R^{'}}$ or $v<1-2^{-(k_0+4)}\sigma$.
\end{proof}

The estimates obtained above are all independent of $\varepsilon$. We would like to iterate the lemmata above to force the maximum of $v^\varepsilon$ to decrease to zero in a dyadic sequence of decreasing parabolic cylinders in order to obtain the continuity of $v^\varepsilon$. 

\begin{prop}\label{Prop4.1.5}
Let $v^\varepsilon$ be a solution to (\ref{4.1.2}) in $Q$ then
$$|(v^\varepsilon)^+(x,t)-(v^\varepsilon)^+(x_0,t_0)|\leq C\omega(|x-x_0|^2+|t-t_0|)$$
for any $(x,t)$ and $(x_0,t_0)$ in $Q$, where $C$ is independent of $\varepsilon$ and $\omega$ denotes the modulus of continuity. 
\end{prop}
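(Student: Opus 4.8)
The plan is to run the standard De Giorgi oscillation‐decay iteration, using Lemmata \ref{lemma4.1.1}–\ref{lemma4.1.4} as the ``one step'' ingredient and then rescaling. Since everything below is uniform in $\varepsilon$, I drop $\varepsilon$ and write $v$. The first observation is that $(v)^+$ is a subsolution of $\Delta w - \partial_t w \geq f_t$ (because $\beta'\geq 0$ makes $v$ subcaloric on $\{v>0\}$, and the positive part of a subsolution of such an equation with bounded right-hand side is again a subsolution up to the harmless bounded forcing term $f_t$); this is precisely the structure exploited in Lemmata \ref{lemma4.1.2}–\ref{lemma4.1.4}. So it suffices to prove a modulus of continuity for a bounded subsolution $v$ of \eqref{4.1.2} with $0\le v\le 1$, at an arbitrary point $(x_0,t_0)\in Q$, and the claimed form $\omega(|x-x_0|^2+|t-t_0|)$ is just the statement that the modulus is with respect to the parabolic distance.

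The core step is a fixed oscillation-reduction ratio at a single scale. First I would normalize: given $(x_0,t_0)$ and a parabolic cylinder $Q_\rho(x_0,t_0)\subset Q$, let $\osc$ of $(v)^+$ over $Q_\rho$ be $\mathrm{osc}_\rho$; after subtracting the infimum and dividing by $\mathrm{osc}_\rho$ (and parabolically rescaling $x\mapsto x_0+\rho x$, $t\mapsto t_0+\rho^2 t$, which preserves the class of subsolutions of \eqref{4.1.2} up to rescaling $f_t$, whose norm only decreases), we reduce to $0\le v\le 1$ in $Q_1$ with $v$ a subsolution. Now take the ``top center'' point; there are two cases. If $v$ is bounded away from $1$ at that point we are in the hypothesis of Lemma \ref{lemma4.1.4}, which gives $v\le 1-C\sigma$ on $Q_{R'}$ with $R'=\sigma/8$, a universal scale; symmetrically, applying the same reasoning to $1-v$ (using that $1-v\ge 0$ is a supersolution-type object, equivalently re-deriving the analog of Lemma \ref{lemma4.1.1}/\ref{lemma4.1.4} for $1-v$) one handles the case where $v$ is bounded away from $0$. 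In either case one obtains $\mathrm{osc}_{R'}(v)\le (1-\mu)\,\mathrm{osc}_1(v)$ for a universal $\mu\in(0,1)$ and universal $R'\in(0,1)$. Undoing the normalization, $\mathrm{osc}_{R'\rho}(v)^+\le (1-\mu)\,\mathrm{osc}_{\rho}(v)^+ + C\rho$ (the additive $C\rho$ coming from the bounded forcing $f_t$ picked up in the rescaling), and iterating over the geometric sequence of radii $\rho_k=(R')^k\rho_0$ yields $\mathrm{osc}_{\rho_k}(v)^+\le C(\rho_k)^\alpha$ for some $\alpha>0$, hence a H\"older — in particular a uniform — modulus of continuity $\omega$ for $(v)^+$ in the parabolic metric, with constants independent of $\varepsilon$. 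This gives the estimate for points of the form $(x,t)$ and $(x_0,t_0)$ with $(x,t)$ in a parabolic cylinder around $(x_0,t_0)$; a standard covering/chaining argument upgrades it to all pairs in $Q$.

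The main obstacle I anticipate is the bookkeeping at the boundary of $Q$ and the role of the non-homogeneous data: the lemmata are stated for interior cylinders $Q_1(x_0,t_0)\subset Q$, so near $\partial_p Q$ one must instead use the prescribed boundary/initial values $v=(\phi^\varepsilon-\psi^\varepsilon)_t$ on the lateral boundary and $v=\Delta(\phi^\varepsilon-\psi^\varepsilon)$ at $t=0$ together with the smoothness of $\phi,\psi$ to get a modulus there by comparison with barriers, and then patch the interior H\"older estimate to the boundary modulus. The second delicate point is making sure that the forcing term $f_t=-(\Delta\psi^\varepsilon-\partial_t\psi^\varepsilon)_t$ really stays bounded uniformly in $\varepsilon$ (this is where the smoothness hypotheses on $\psi$ and the convergence $\psi^\varepsilon\to\psi$ are used) and that the constant $C$ in the additive term of the oscillation inequality is genuinely $\varepsilon$-independent, so that the limit $\varepsilon\to 0$ produces a bona fide modulus of continuity for $(u-\psi)_t^+$, equivalently for $u_t^+$. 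Everything else — the Sobolev/De Giorgi iteration, the isoperimetric Lemma \ref{lemma4.1.3}, the measure-theoretic counting in Lemma \ref{lemma4.1.4} — is already packaged in the preceding lemmata and only needs to be invoked.
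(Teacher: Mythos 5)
Your proposal diverges from the paper's actual argument in a way that creates genuine gaps, not just a different route.

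The central structural difference: the paper does \emph{not} run a two-sided De Giorgi oscillation decay. It reduces to the single case $(v^\varepsilon)^+(x_0,t_0)=0$ ("otherwise $v^\varepsilon$ satisfies a nice equation with smooth data and with regular boundary"), and then iterates Lemma~\ref{lemma4.1.4} alone to force $\sup_{Q_k}v\to 0$ at that one point. Lemma~\ref{lemma4.1.4} requires the hypothesis $v^\varepsilon(0,0)=0$, not "$v$ bounded away from $1$ at the top center" as you invoke; your Case~1 therefore does not meet the lemma's hypothesis. Your Case~2 --- applying the same machinery to $1-v$ --- fails for a more fundamental reason: the problem is not symmetric under $v\mapsto 1-v$. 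The favorable sign $\beta'_\varepsilon\geq 0$ makes $v$ subcaloric on $\{v>0\}$ (this is what Lemma~\ref{lemma4.1.2} exploits), but $1-v$ satisfies $\Delta(1-v)-\partial_t(1-v)=-\beta'_\varepsilon v-f_t$, and there is no analogue of Lemma~\ref{lemma4.1.4} for this function because $-\beta'_\varepsilon v$ is not sign-favorable on the relevant level sets and $\beta'_\varepsilon$ is unbounded in $\varepsilon$. Relatedly, your normalization step "subtract the infimum" does not preserve the class of solutions to \eqref{4.1.2}: if $c=\inf v>0$, then $v-c$ solves $\Delta(v-c)-\partial_t(v-c)=\beta'_\varepsilon((v-c)+c)+f_t$, acquiring an uncontrolled extra term $\beta'_\varepsilon c$, so you cannot feed $v-c$ back into the lemmata.

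Two further discrepancies worth noting. First, you claim a H\"older modulus; the paper's Proposition~\ref{Prop4.1.5} only asserts a modulus of continuity, and for a reason: the rescaling in the paper is $R_k=\tfrac{\sigma}{8}M_k$ (radius shrinking with the supremum, not purely parabolically), and the resulting contraction factor $\mu_k=1-C(\sup_{Q_k}v^+)^{1+n/2}\to 1$ as $M_k\to 0$, so the decay is slower than geometric. Your scheme, with a fixed ratio $R'$ and a fixed $\mu$, would give H\"older if it worked --- which is a signal that it cannot be right as stated, since the paper's authors were careful to claim only the weaker conclusion here (they reserve the H\"older statement for the Signorini case, Proposition~\ref{Prop4.8}). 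Second, you do not address how the forcing $\bar f_t$ behaves under the paper's $M_k$-dependent rescaling, which is the point that degrades the rate from geometric to a general modulus. The boundary/covering issues you flag as "the main obstacle" are actually routine by comparison; the real obstacle is that the one-sided structure of the problem simply does not furnish the symmetric oscillation decay you are after, and the paper circumvents it by working only at zero points of $(v^\varepsilon)^+$.
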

\begin{proof}
It is enough to consider only the case when $(v^\varepsilon)^+(x_0,t_0)=0$, since, otherwise, $v^\varepsilon$ satisfies a nice equation with smooth data and with regular boundary. Therefore, for simplicity, we take $(x_0,t_0)=(0,0)$ and $Q_1$ as before. Again, we drop the $\varepsilon$ and we set 
$$Q_k:=Q_{R_k}, \ \ \ M_k:=\sup_{Q_k}v$$
where $R_k
:=\frac{\sigma}{8}M_k$ and $$\bar{v}:=\frac{v_k}{M_k}$$
where $v_k(x,t):=v(R_kx,(R_k)^2t)$. Then $\bar{v}$ satisfies $$\Delta \bar{v}-\partial_t \bar{v}\geq \bar{f}_t\ \ \ {\rm{in}}\ \ \ Q_1.$$
Therefore by Lemma \ref{lemma4.1.4}, $$\sup_{Q_{R'}}\bar{v}\leq 1-C\sigma$$
or in our original setting
$$\sup_{Q_{k+1}}v\leq \mu_k\sup_{Q_k}v$$
where $\mu_k=1-C(\sup_{Q_k}v^+)^{1+\frac{n}{2}}$. So, even, if $\mu_k\rightarrow 1$ as $k\rightarrow \infty$, $M_k\rightarrow 0$. 

To finish the proof, we use a standard barrier argument to get the continuity from the future.
\end{proof}

\begin{thm}\label{theorem4.1.6}
Let $u$ be a solution to (\ref{model1}) then $(u-\psi)_t^+$ is continuous. 
\end{thm}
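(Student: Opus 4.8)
The plan is to deduce the continuity of $(u-\psi)_t^+$ directly from the uniform modulus of continuity for $(v^\varepsilon)^+$ established in Proposition \ref{Prop4.1.5}, by passing to the limit $\varepsilon\to 0$. First I would recall that, by the penalization construction used in the proof of Theorem \ref{semiconvexity}, the solution $u$ of (\ref{model1}) is obtained as a (locally uniform) limit of the penalized solutions $u^\varepsilon$ solving (\ref{penmodel1}), and that by Theorem \ref{semiconvexity} the second time derivatives $u^\varepsilon_{tt}$ are bounded below uniformly in $\varepsilon$ (this is exactly the time quasi-convexity that was used to fix the constant $C$ in the energy inequality inside Lemma \ref{lemma4.1.1}). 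Set $v^\varepsilon=(u^\varepsilon-\psi^\varepsilon)_t$, which solves the derived problem (\ref{4.1.2}). Proposition \ref{Prop4.1.5} gives a modulus of continuity for $(v^\varepsilon)^+$ on $Q$ that is independent of $\varepsilon$.

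Next I would argue that $v^\varepsilon \to (u-\psi)_t$ in an appropriate weak/distributional sense (the standard parabolic estimates for the penalized problem give uniform local bounds on $u^\varepsilon$ in, say, $W^{2,1}_p$ away from the parabolic boundary, hence $u^\varepsilon_t$ is uniformly bounded and converges, along a subsequence, locally uniformly to $u_t$; subtracting the smooth convergent $\psi^\varepsilon_t$ identifies the limit as $(u-\psi)_t$). Since $(v^\varepsilon)^+$ is a family of functions sharing a common modulus of continuity and is uniformly bounded, by Arzelà–Ascoli it converges locally uniformly (along the subsequence) to a continuous function, which must coincide with $((u-\psi)_t)^+ = (u-\psi)_t^+$ wherever $(u-\psi)_t$ is well defined, and the locally uniform limit then \emph{defines} a continuous representative everywhere on $Q$. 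Because on the noncoincidence set $\{u>\psi\}$ the function $u$ solves the heat equation and is smooth there, while on the coincidence set $\{u=\psi\}$ one has $u_t=\psi_t$ so $(u-\psi)_t=0$, the continuous limit is consistent with $(u-\psi)_t^+$ in the classical sense, completing the proof.

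The main obstacle I anticipate is the passage to the limit near the parabolic boundary and the justification that the pointwise/weak limit of $v^\varepsilon$ is genuinely $(u-\psi)_t$ rather than merely some function bounded by it: one needs the boundary data $(\phi^\varepsilon-\psi^\varepsilon)_t$ and $\Delta(\phi^\varepsilon-\psi^\varepsilon)$ to converge appropriately (this is where the smoothness hypotheses on $\phi,\psi$ and the compatibility $(\phi-\psi)|_{t=0}>0$ enter, ensuring $\beta_\varepsilon(u^\varepsilon-\psi^\varepsilon)$ and its derivatives stay controlled). A secondary subtlety is that Proposition \ref{Prop4.1.5} as stated controls only the \emph{positive} part $(v^\varepsilon)^+$; but this is exactly what is needed, since the claim is only about $(u-\psi)_t^+$, and one should simply remark that the negative part is harmless (either it converges to $0$ on the coincidence set, or one is in the smooth region). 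I would close by noting that the same scheme applies verbatim — with the obvious notational changes — once the analogous decay proposition is available, which is why the theorem is stated cleanly for prototype (\ref{model1}).
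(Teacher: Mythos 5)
Your argument is correct and is essentially the paper's own proof, just with the details spelled out: the paper compresses everything into the single remark that a subsequence of $v^\varepsilon$ converges uniformly to the appropriate limit, and you have supplied exactly the expected unpacking (Arzel\`a--Ascoli from the $\varepsilon$-uniform modulus of Proposition~\ref{Prop4.1.5}, identification of the limit with $(u-\psi)_t$ via standard parabolic estimates for the penalized problem, and the observation that only the positive part is controlled and needed).
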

\begin{proof}
It is well known that a subsequence of $v^\varepsilon$ will converge uniformly to the unique solution of (\ref{model1}).
\end{proof}

\subsection{The (nondynamic) thin obstacle problem or Signorini Problem}\label{subsection4.2}

%
%
%

Let us extend $\psi^\varepsilon$ to all $\Omega$ i.e. we take any function $\tilde{\psi}^\varepsilon(x',x_n,t)$ such that $\tilde{\psi}^\varepsilon(x',0,t)=\psi^\varepsilon(x',t)$, $\tilde{\psi}^\varepsilon(x',x_n,t)<\phi$ on $\partial_p((\Omega\setminus\Gamma)\times(0,T])$ and $\frac{\partial\tilde{\psi}^\varepsilon}{\partial\nu}(x',0,t)=0.$ 
Then our problem takes the form
\begin{equation}\label{4.2.2}
\begin{cases}  
\Delta v^\varepsilon-\partial_tv^\varepsilon=f_t &{\rm{in}} \ \  \Omega \times (0,T] \cr
-\partial_\nu v^\varepsilon=\beta_\varepsilon'(u^\varepsilon
-\tilde{\psi}^\varepsilon)v^\varepsilon &{\rm{on}} \ \  \Gamma \times (0,T] \cr
v^\varepsilon=(\phi^\varepsilon-\tilde{\psi}^\varepsilon)_t &{\rm{on}} \ \  \partial_p(\overline{\Omega}\setminus\Gamma \times (0,T]) \cr
v^\varepsilon=\Delta (\phi^\varepsilon-\tilde{\psi}^\varepsilon) &{\rm{on}} \ \  \Omega \times \{0\}. \cr
\end{cases}
\end{equation}
where $v^\varepsilon=(u^\varepsilon-\tilde{\psi}^\varepsilon)_t$ and $f=-(\Delta\tilde{\psi}^\varepsilon-
\partial_t\tilde{\psi}^\varepsilon)$.


We shall repeat the approach of Section \ref{subsection4.1} but, 
instead of parabolic cylinders, we take parabolic rectangular cylinders with one of its sides lying on $\Gamma$. 
We normalize again i.e. our solution is between zero and one and we prove that, if $v^\varepsilon$ 
is zero on the top center and on $\Gamma$ in such a cylinder, then in a concentric subcylinder into the future $v^\varepsilon$ 
is smaller than one. Then we rescale and repeat. 


Our first lemma asserts that if $v^\varepsilon$ is "most of the time" very near to its positive maximum in some cylinder sitting in $(\R^{n+1})$ against the hyperplane $x_n=0$ and going backwards in time then in a smaller cylinder into the future, $v^\varepsilon$ is strictly positive.

\begin{lemma}\label{lemma4.2.1}
Let $Q_1(x_0',0,t_0)\subset \Omega\times(0,T]$ 
where $Q_1(x_0',0,t_0)=B_1(x_0',0)\times(t_0-1,t_0]$, $B_1(x_0',0)=B_1'(x'_0)\times(0,1)$, $B'_1(x_0)=\{x':|x'-x_{0}'|<1\}$ and $Q_1'(x_0',t_0)=B_1'(x_0')\times(t_0-1,t_0]$. Suppose that $0<v^\varepsilon<1$ in $Q_1(x_0,t_0)$ 
where $v^\varepsilon$ is a solution to (\ref{4.2.2}). Then there exists a constant $\sigma>0$,
 independent of $\varepsilon$, such that $$\fint_{Q'_1(x'_0,t_0)}\chi_{\{1-v^\varepsilon>0\}}dx'dt+\fint_{Q_1(x_0,t_0)}(1-v^\varepsilon)^2dxdt<\sigma$$
implies that $$v^\varepsilon\geq \frac{1}{2}$$ in $Q_{1/2}(x_0,t_0)$.
\end{lemma}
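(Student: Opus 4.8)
The plan is to mirror the interior argument of Lemma \ref{lemma4.1.1} (the De Giorgi iteration) but now for the boundary version near $\Gamma\subset\{x_n=0\}$, where $v^\varepsilon$ satisfies the oblique/Neumann-type condition $-\partial_\nu v^\varepsilon=\beta'_\varepsilon(u^\varepsilon-\tilde\psi^\varepsilon)v^\varepsilon$ on $\Gamma\times(0,T]$. The central observation is that $\beta'_\varepsilon\ge 0$, so when we substitute $w=1-v$ the relevant boundary term has a favorable sign: testing against $\zeta^2(w-k)^+$ produces a boundary integral $\int_{\Gamma}\beta'_\varepsilon(u^\varepsilon-\tilde\psi^\varepsilon)\,v\,\zeta^2(w-k)^+$ which, since $0<v<1$ and $w-k\ge 0$, is controlled (in the right direction) — it either helps the inequality or contributes a lower-order term bounded by $\|\beta'_\varepsilon\|$ times the measure of the set where $w>k$. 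So after dropping $\varepsilon$ and translating $(x_0',0,t_0)$ to the origin, I would multiply $\Delta w-\partial_t w = -f_t$ (plus the boundary condition) by $\zeta^2(w-k)^+$ with $\zeta$ a cutoff vanishing near the parabolic boundary of $Q_1$ \emph{except} on the flat piece $Q_1'\subset\Gamma$, integrate by parts over the half-cylinder, and use the boundary condition to rewrite $\int_{\Gamma}\partial_\nu w\,\zeta^2(w-k)^+$.

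The key steps, in order: (i) derive the boundary energy inequality
$$\max_{-1\le t\le 0}\int_{B_1}(\zeta(w-k)^+)^2\,dx+\int_{Q_1}|\grad(\zeta(w-k)^+)|^2\,dx\,dt \le C\int_{Q_1}\Big([(w-k)^+]^2(|\grad\zeta|^2+|\partial_t\zeta|)+(w-k)^+(|\partial_t\zeta|+1)+\chi_{\{w>k\}}\Big)dx\,dt + C\int_{Q_1'}\chi_{\{w>k\}}\,dx'\,dt,$$
where the extra flat-boundary term comes from the trace estimate applied to $\int_{\Gamma}\beta'_\varepsilon v\,\zeta^2(w-k)^+$ after bounding $\beta'_\varepsilon v$ and $(w-k)^+$ by constants on the set $\{w>k\}$; here I use the time quasi-convexity bound $\|(u_{tt})^-\|_\infty<\infty$ from Theorem \ref{semiconvexity} exactly as in Lemma \ref{lemma4.1.1}, and $C$ depends on $\|\beta\|_\infty$, $\|f_t\|_\infty$, and the quasi-convexity constant. (ii) Run the De Giorgi dyadic iteration with the same choices $k_m=\tfrac12(1-2^{-m})$, $R_m=\tfrac12(1+2^{-m})$, cutoffs $\zeta_m$ with $|\grad\zeta_m|\le C2^m$, $|\partial_t\zeta_m|\le C4^m$, setting $w_m=(w-k_m)^+$. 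The only new ingredient is that the flat-boundary term $\int_{Q_m'}\chi_{\{w_m\ne 0\}}dx'\,dt$ must be absorbed: bound it by $(k_m-k_{m-1})^{-q}\int_{Q_m'}w_{m-1}^q$ for a suitable $q$, then control the $(n)$-dimensional trace integral by the $(n+1)$-dimensional one via the parabolic trace/Sobolev inequality, so that it is dominated by a power of $I_{m-1}:=\int(\zeta_{m-1}w_{m-1})^2$. This is precisely where the hypothesis of the lemma includes the extra term $\fint_{Q_1'}\chi_{\{1-v^\varepsilon>0\}}dx'\,dt$ in the smallness assumption: $I_0$ (now a sum of a solid and a flat piece) being below a structural threshold $\sigma$ forces $I_m\to 0$ via the recursion $I_m\le C16^m I_{m-1}^{1+\frac{2}{n+2}}$ (or with an exponent governed by the trace embedding), hence $w\le\tfrac12$, i.e. $v\ge\tfrac12$, in $Q_{1/2}$.

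The main obstacle I anticipate is handling the flat-boundary terms cleanly: one must verify that the trace of $(w-k)^+$ on $\{x_n=0\}$ can be estimated by the full parabolic $L^{2\frac{n+2}{n}}$ norm with a constant that does not blow up through the iteration, and that the measure term $|Q_m'\cap\{w_m\ne 0\}|$ (an $n$-dimensional, not $(n+1)$-dimensional, quantity) enters the recursion with a good power. The extension $\tilde\psi^\varepsilon$ was chosen so that $\partial_\nu\tilde\psi^\varepsilon(x',0,t)=0$, which is what makes $v^\varepsilon=(u^\varepsilon-\tilde\psi^\varepsilon)_t$ satisfy the stated homogeneous-type Neumann condition with the $\beta'_\varepsilon v^\varepsilon$ right-hand side and no lower-order forcing on $\Gamma$; I would remark on this to justify that no additional boundary data terms appear. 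Apart from this, the proof is identical in structure to Lemma \ref{lemma4.1.1}, and I would simply say so rather than reproduce every estimate.
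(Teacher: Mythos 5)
The high-level plan (boundary De Giorgi iteration, dyadic truncations, Sobolev) is the right one, and your identification of the flat-boundary term and the role of $\partial_\nu\tilde\psi^\varepsilon=0$ is correct. However, your treatment of the boundary integral $\int_{Q_1'}\beta'_\varepsilon(u^\varepsilon-\tilde\psi^\varepsilon)v^\varepsilon\,\zeta^2(w-k)^+\,dx'\,dt$ contains a fatal error, and this is precisely the point where the lemma is actually delicate.

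First, the sign does not help you. With $w=1-v$ the boundary condition becomes $\partial_\nu w=\beta'_\varepsilon v\ge0$, and when you integrate by parts the term $\int_{Q_1'}\partial_\nu w\,\zeta^2(w-k)^+$ lands on the \emph{right-hand side} of the energy inequality with a $+$ sign; since the integrand is nonnegative, this term must be estimated, not dropped. (The sign is favorable for the \emph{other} truncation direction, when you test $v$ itself against $\zeta^2(v-k)^+$, which is why the paper can drop the boundary term in the subsolution analogue of Lemma \ref{lemma4.1.2} but not here.) Second, and more seriously, you propose to bound the boundary integral by $\|\beta'_\varepsilon\|_\infty$ times the measure of $\{w>k\}$. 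But $\beta_\varepsilon(s)=-e^{\varepsilon/(s-\varepsilon)}\chi_{\{s\le\varepsilon\}}$ has $\|\beta'_\varepsilon\|_\infty\sim\varepsilon^{-1}$, which blows up as $\varepsilon\to0$; the entire point of the lemma is to obtain an estimate uniform in $\varepsilon$, and your bound destroys that.

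The step you are missing is the observation that, on $\Gamma$, $\beta'_\varepsilon(u^\varepsilon-\tilde\psi^\varepsilon)v^\varepsilon=\partial_t\big[\beta_\varepsilon(u^\varepsilon-\tilde\psi^\varepsilon)\big]$, so the boundary term is a pure time derivative. Integrating by parts \emph{in $t$} over $Q_1'$ replaces $\beta'_\varepsilon$ by $\beta_\varepsilon$ — which is uniformly bounded by $1$ — and produces terms involving $\partial_t(w-k)^+$, i.e. $(u-\tilde\psi)_{tt}$, plus the harmless cutoff terms $\partial_t\zeta$. This is exactly where the quasi-convexity enters: the one-sided bound $\|(u_{tt})^-\|_\infty<\infty$ from Theorem \ref{semiconvexity} controls $\partial_t(w-k)^+$ from above on $\{w>k\}$, yielding the flat-boundary contribution $\chi_{\{w>k\}}$ with a constant depending only on $\|\beta_\varepsilon\|_\infty$, $\|(u_{tt})^-\|_\infty$ and $\|f_t\|_\infty$, all $\varepsilon$-uniform. (This time-IBP + quasi-convexity trick is also the heart of Lemma \ref{lemma4.1.1}, where the analogous $\beta_t$ term is an interior term; you cite quasi-convexity "exactly as in Lemma \ref{lemma4.1.1}" but do not carry out the transformation that makes it applicable.) Once the correct energy inequality is in place, the iteration must also be set up to mix flat ($n$-dimensional) and solid ($(n+1)$-dimensional) integrals — the paper derives a recursion of the form $I_m\le C2^{8m}\big(I_{m-2}^{1+2/n}+I_{m-1}^{1/2}I_{m-2}^{1/2+1/n}\big)$ by a trace/divergence argument — but that part is routine; the boundary-term transformation is the essential missing ingredient.
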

\begin{proof}
For simplicity we drop the superscript $\varepsilon$, shift $(x_0,0,t_0)$ to $(0,0,0)$ and write $Q_1$ for $Q_1(0,0,0)$. We first derive an energy inequality associated to our problem. Set $w=1-v$ then the problem becomes
\begin{equation}\label{w4.2.2}
\begin{cases}  
\Delta w-\partial_tw=-f_t &{\rm{in}} \ \  \Omega \times (-T,T] \cr
\partial_\nu w=\beta_t(u-\tilde{\psi}) &{\rm{on}} \ \  \Gamma \times (-T,T] \cr
w=1-(\phi-\tilde{\psi})_t &{\rm{on}} \ \  \partial_p(\overline{\Omega}\setminus\Gamma \times (-T,T]) \cr
w=1-\Delta (\phi-\tilde{\psi}) &{\rm{on}} \ \  \Omega \times \{-T\}. \cr
\end{cases}
\end{equation}
Choose a smooth cutoff function $\zeta$ vanishing near the parabolic boundary of $Q_1$ except on $Q_1'$ and $k\geq 0$. Multiply the above by $\zeta^2(w-k)^+$ and integrate by parts to obtain
$$\int_{Q_1}[\grad(\zeta^2(w-k)^+)\grad w+\zeta^2(w-k)^+\partial_tw]dxdt=\int_{Q_1'}\zeta^2(w-k)^+\partial_\nu wdx'dt+\int_{Q_1}f_t\zeta^2(w-k)^+dxdt$$
and
$$\int_{Q_1}\bigg[\frac{1}{2}\partial_t[(\zeta(w-k)^+)^2]+|\grad(\zeta(w-k)^+)|^2\bigg]dxdt=\int_{Q_1'}\partial_t[\zeta^2(w-k)^+\beta(u-\psi)]dx'dt$$
$$
-\int_{Q_1'}\zeta^2\partial_t(w-k)^+\beta(u-\psi)dx'dt+2\int_{Q_1'}\zeta\partial_t\zeta(w-k)^+\beta(u-\psi)dx'dt
$$
\begin{equation}\label{p5-1}
+\int_{Q_1}(|\grad \zeta|^2+\zeta\partial_t\zeta)[(w-k)^+]^2dxdt+\int_{Q_1}f_t\zeta^2(w-k)^+dxdt.
\end{equation}
Now, using the fact that $\beta$ is bounded and negative, $(u-\psi)_{tt}$ is bounded below and since the upper limit of the $t-$integration $t=0$ can be replaced by any $-1^2\leq t\leq 0$, we obtain
$$\frac{1}{2}\max_{-1\leq t\leq 0}\int_{B_1}[(w-k)^+\zeta]^2dx+\int_{Q_1}|\grad ((w-k)^+\zeta)|^2dxdt $$
$$\leq ||\beta||_{\infty}||(u-\psi)_{tt}^-||_{\infty}\int_{Q_1\cap\{w>k\}}\zeta^2dx'dt+||\beta||_{\infty}\int_{Q_1'}(w-k)^+\partial_t\zeta dx'dt$$
$$+\int_{Q_1}[(w-k)^+]^2(|\grad\zeta|^2+\partial_t\zeta)dxdt+\int_{Q_1}f_t\zeta^2(w-k)^+dxdt$$
and, a fortiori, we have the "energy inequality" 
$$\max_{-1\leq t\leq 0}\int_{B_1}[(w-k)^+\zeta]^2dx+\int_{Q_1}|\grad ((w-k)^+\zeta)|^2dxdt$$
\begin{equation}\label{eq_4.21}
\leq C\bigg(\int_{Q_1'}(\partial_t\zeta^2(w-k)^++\chi_{\{w>k\}})dx'dt+\int_{Q_1}[(w-k)^+]^2(|\grad\zeta|^2+\partial_t\zeta)dxdt+\int_{Q_1}\zeta^2(w-k)^+dxdt\bigg)
\end{equation}
where $C=\overline{C}\max\{1,||\beta||_{\infty}(2+||(u_{tt})^-||_{\infty}),||f_t||_{\infty}\}$.


Now that we have our energy inequality we shall obtain an iterative sequence of inequalities. We, therefore, define 
$$k_m=\frac{1}{2}\bigg(1-2^{-m}\bigg)\ \ \ R_m=\frac{1}{4}\bigg(1+\frac{1}{2^m}\bigg)$$
$$Q_m:=B_m\times (R^2_m,0], \ \ B_m:=B'_m\times [0,R_m], \ \ B'_m:=B'_{R_m}=\{|x'|<R_m\}, $$
$$Q_m':=\{(x_1,...,x_n,t):-R_m\leq x_i\leq R_m, -R_m\leq t\leq 0\}$$
and we choose smooth cutoff functions $\zeta_m$ such that $\chi_{Q_{m+1}}\leq \zeta_m\leq \chi_{Q_{m}}$, $|\grad\zeta_m|\leq C2^m$ and $0\leq \partial_t \zeta_m\leq C4^m$. We set $w_m=(w-k_m)^+$ and we denote by
$$I_m:=\max_{-R^2\leq t\leq 0}\int(\zeta_mw_m)^2dx+\int|\grad(\zeta_mw_m)|^2dxdt.$$
We want to prove that for every $m\geq 0$, $I_m\leq \alpha_0M^{-m}$ with $\alpha_0>0$ and $M>1$ to be chosen.  The proof is by induction, for $1\leq m\leq 2$ we choose $\sigma$ such that $4C\sigma<M^{-2}$ and for $m\geq 3$ we have
\begin{eqnarray}
I_m &\leq & C16^m\bigg(\int(w_{m-1}\zeta_{m-1})^2dxdt
+\int(w_{m-1}\zeta_{m-1})^2dx'dt\bigg)\nonumber\\
&=&  C16^m\bigg(\int(w_{m-1}\zeta_{m-1})^2dxdt
-2\int(w_{m-1}\zeta_{m-1})(w_{m-1}\zeta_{m-1})_{x_n}dxdt\bigg)\nonumber\\
&\leq & C16^m\bigg[\int(w_{m-1}\zeta_{m-1})^2dxdt
+\bigg(\int(w_{m-1}\zeta_{m-1})^2dxdt\bigg)^{1/2}\bigg(\int|\grad(\zeta_{m-1}w_{m-1})|^2dxdt\bigg)^{1/2}\bigg]\nonumber\\
\end{eqnarray}
where we used the divergence theorem and H\"{o}lder's inequality. Now, by Sobolev's inequality, we obtain
$$\int_{Q_m}(w_{m-1}\zeta_{m-1})^2dxdt\leq \bigg(\int_{Q_{m-1}}(w_{m-1}\zeta_{m-1})^{2\frac{n+2}{n}}dxdt\bigg)^{\frac{n}{n+2}}\bigg(\int_{Q_{m-1}}\chi_{\{w_{m-1}\neq 0\}}dxdt\bigg)^{\frac{2}{n+2}}$$
$$\leq 2^{4m}I_{m-2}^{1+\frac{2}{n}}.$$
Therefore, by substituting in the above we obtain
$$I_m\leq C2^{8m}(I_{m-2}^{1+\frac{2}{n}}+I_{m-1}^{1/2}I_{m-2}^{\frac{1}{2}+\frac{1}{n}}).$$
Hence, if we choose $M=2^{8n}$ and $\alpha_0=C^{-\frac{n}{2}}2^{-8n(n+2)}$, the claim is proved.

\end{proof}

From this point on we observe that, since $\beta'>0$, the boundary integral is nonnegative and can be omitted; then by reflecting about the hyperplane we are in the same situation as that of Section \ref{subsection4.1} with square cylinders. Therefore we have arrived at out main result:

\begin{prop}\label{Prop4.8}
Let $v^\varepsilon$ be a solution to (\ref{4.2.2}) in $Q$ then
$$|(v^\varepsilon)^+(x,t)-(v^\varepsilon)^+(x_0,t_0)|\leq C(|x-x_0|^2+|t-t_0|)^\alpha$$
for any $(x,t)$ and $(x_0,t_0)$ in $Q$ and some $0<\alpha<1$, where $C$ and $\alpha$ are independent of $\varepsilon$. 
\end{prop}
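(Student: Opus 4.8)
The plan is to reduce the Signorini (thin obstacle) case to the ``thick'' obstacle situation already treated in Section~\ref{subsection4.1}, exploiting two structural facts: first, that $\beta_\varepsilon'\ge 0$, so that the Signorini boundary condition $-\partial_\nu v^\varepsilon = \beta_\varepsilon'(u^\varepsilon-\tilde\psi^\varepsilon)v^\varepsilon$ makes $(v^\varepsilon)^+$ a subsolution across $\Gamma$ (the boundary integral has a favorable sign and can simply be dropped in the energy inequalities); and second, that once the boundary term is discarded, even reflection of $v^\varepsilon$ about the hyperplane $\{x_n=0\}$ produces a function on full parabolic cylinders to which Lemmata~\ref{lemma4.1.1}--\ref{lemma4.1.4} and the iteration of Proposition~\ref{Prop4.1.5} apply verbatim. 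The one genuinely new ingredient is Lemma~\ref{lemma4.2.1}, which replaces Lemma~\ref{lemma4.1.1} in the half-cylinder geometry and supplies the ``near-maximum $\Rightarrow$ strictly positive'' step with the extra measure term $\fint_{Q_1'}\chi_{\{1-v^\varepsilon>0\}}\,dx'dt$ coming from the lateral piece on $\Gamma$; this has already been established.

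First I would fix a point $(x_0',0,t_0)\in \overline{\Gamma}\times(0,T]$ with $(v^\varepsilon)^+(x_0',0,t_0)=0$ — if the base point lies in the interior of $\Omega$ or $(v^\varepsilon)^+$ does not vanish there, the conclusion is just interior parabolic regularity for a nice equation — and normalize so that $0\le v^\varepsilon\le 1$ on a unit half-cylinder $Q_1$ resting on $\Gamma$. Then I would run the De~Giorgi machine on the truncations exactly as in Lemma~\ref{lemma4.1.4}: starting from Lemma~\ref{lemma4.2.1} (the half-cylinder analogue of Lemma~\ref{lemma4.1.1}), use the isoperimetric Lemma~\ref{lemma4.1.3} — which holds for subsolutions in $Q_1$ irrespective of whether a side lies on $\Gamma$, since after dropping the boundary term and reflecting we are dealing with a bona fide subcaloric function — and then Lemma~\ref{lemma4.1.2} to obtain, for a solution with $v^\varepsilon(x_0',0,t_0)=0$, the oscillation decay $\sup_{Q_{R'}}v^\varepsilon \le 1-C\sigma$ with $R'=\sigma/8$. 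Rescaling $\bar v := v_k/M_k$ with $M_k=\sup_{Q_k}v^\varepsilon$ and $R_k:=\tfrac{\sigma}{8}M_k$, as in Proposition~\ref{Prop4.1.5}, gives $\sup_{Q_{k+1}}v^\varepsilon \le \mu_k \sup_{Q_k}v^\varepsilon$ with $\mu_k = 1-C(\sup_{Q_k}(v^\varepsilon)^+)^{1+n/2}$.

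To convert this iterated-contraction scheme into the stated Hölder (rather than merely continuity) bound, I would observe that the geometric-decay-with-slowly-degenerating-factor produced here is of the standard form handled by an elementary lemma: if $a_{k+1}\le (1-C a_k^{\,\gamma})a_k$ with $\gamma=1+n/2>0$, then $a_k\to 0$, and in fact $a_k \le C' k^{-1/\gamma}$; combined with the geometric shrinking of the radii $R_k$ one extracts a power rate $a_k \lesssim R_k^{\,\alpha}$ for some $\alpha=\alpha(n)\in(0,1)$, which is precisely the assertion $|(v^\varepsilon)^+(x,t)-(v^\varepsilon)^+(x_0,t_0)|\le C(|x-x_0|^2+|t-t_0|)^\alpha$. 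The uniformity in $\varepsilon$ is automatic because every constant in Lemmata~\ref{lemma4.1.1}--\ref{lemma4.1.4}, \ref{lemma4.2.1} and the auxiliary measure $\sigma$ is $\varepsilon$-independent (they depend only on $n$, $\|\beta\|_\infty$, $\|f_t\|_\infty$ and the a~priori lower bound on $(u^\varepsilon-\tilde\psi^\varepsilon)_{tt}$ from Theorem~\ref{semiconvexity}). Finally, continuity from the future at the vertex is obtained, as in Proposition~\ref{Prop4.1.5}, by a standard parabolic barrier argument near $\{t=t_0\}$.

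The main obstacle, and the point deserving most care, is the passage from the lateral condition on $\Gamma$ to a clean subsolution: one must check that in the energy inequality underlying Lemma~\ref{lemma4.2.1} the surface term $\int_{Q_1'}\zeta^2(w-k)^+\partial_\nu w\,dx'dt$ (equivalently $-\int_{Q_1'}\zeta^2(w-k)^+\beta_\varepsilon'(u^\varepsilon-\tilde\psi^\varepsilon)v^\varepsilon$) indeed has the right sign so that it may be discarded — which is exactly where $\beta_\varepsilon'\ge 0$, $\beta_\varepsilon\le 0$, and $0\le v^\varepsilon\le 1$ enter — and that the reflected function across $\{x_n=0\}$ remains subcaloric (no spurious distributional contribution on the hyperplane once $\partial_\nu v^\varepsilon\le 0$ holds in the appropriate one-sided sense). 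Once this sign bookkeeping is in place, everything else is a line-by-line transcription of Section~\ref{subsection4.1}.
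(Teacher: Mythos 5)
Your proposal correctly reproduces the paper's overall strategy: drop the nonnegative boundary term (using $\beta_\varepsilon'\ge 0$), reflect evenly across $\{x_n=0\}$, and run the De~Giorgi machinery from Section~\ref{subsection4.1} on the resulting subcaloric function, with Lemma~\ref{lemma4.2.1} supplying the half-cylinder analogue of Lemma~\ref{lemma4.1.1}. This matches the short remark preceding Proposition~\ref{Prop4.8} and the structure of its proof.

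The gap is in your final paragraph, where you pass from the oscillation decay to the claimed H\"older bound. You quote the contraction factor $\mu_k = 1-C\bigl(\sup_{Q_k}(v^\varepsilon)^+\bigr)^{1+n/2}$, which is the \emph{thick}-obstacle formula from Proposition~\ref{Prop4.1.5}; with that choice $\mu_k\to 1$, and indeed the paper there concludes only a general modulus $\omega$, not H\"older. Proposition~\ref{Prop4.8} instead has $\mu_k = 1-C\bigl(M_k/R_k\bigr)^{1+n/2}$, and since $R_k := \tfrac{\sigma}{8}M_k$ the ratio $M_k/R_k = 8/\sigma$ is \emph{constant}, so $\mu_k$ is a fixed number strictly below $1$ and $M_k\sim 2^{-k}$, $R_k\sim 2^{-k}$ decay geometrically. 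That geometric decay is what gives the H\"older exponent. Your proposed repair is internally inconsistent: from $a_{k+1}\le(1-Ca_k^\gamma)a_k$ you correctly get $a_k\lesssim k^{-1/\gamma}$, a \emph{polynomial} rate, but you then invoke ``geometric shrinking of the radii $R_k$,'' which contradicts your own definition $R_k=\tfrac{\sigma}{8}M_k=\tfrac{\sigma}{8}a_k$ (those radii would also decay only polynomially in $k$). Worse, if one did have $a_k\sim k^{-1/\gamma}$ together with $R_k\sim\rho^k$, then $a_k/R_k^\alpha\to\infty$ for every $\alpha>0$, so no power-rate modulus follows. To close the argument you need the thin-obstacle form of the contraction --- the point being that $M_k/R_k$ is held fixed by the rescaling, so the per-step decay does not degenerate --- after which $M_k$ and $R_k$ shrink geometrically at compatible rates and the stated H\"older estimate drops out. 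The barrier argument for continuity from the future is fine as stated.
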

\begin{proof}
It is enough to consider only the case when $(v^\varepsilon)^+(x_0,t_0)=0$. For simplicity, we take $(x_0,t_0)=(0,0)$ and $Q_1$ as before. Again, we drop the $\varepsilon$ and we set 
$$Q_k:=Q_{R_k}, \ \ \ M_k:=\sup_{Q_k}v$$
where $R_k
:=\frac{\sigma}{8}M_k$ and $$\bar{v}:=\frac{v_k}{M_k}$$
where $v_k(x,t):=v(R_kx,(R_k)^2t)$. Then $\bar{v}$ satisfies $$\Delta \bar{v}-\partial_t \bar{v}\geq \bar{f}_t\ \ \ {\rm{in}}\ \ \ Q_1$$
and $$\sup_{Q_{R'}}\bar{v}\leq 1-C\sigma$$
or in our original setting
$$M_{k+1}\leq \mu_kM_k$$
where $\mu_k=1-C(\frac{M_k}{R_k})^{1+\frac{n}{2}}$. So, even, if $\mu_k\rightarrow 1$ as $k\rightarrow \infty$, $M_k\rightarrow 0$. As a matter of fact $M_k\sim 2^{-k}$ and $R_k\sim 2^{-k}$.

To finish the proof, we use a standard barrier argument to get the H\"{o}lder continuity.
\end{proof}

\begin{thm}\label{theorem4.9}
Let $u$ be a solution to (\ref{model2in}), then $(u-\psi)_t^+$ is H\"{older} continuous. 
\end{thm}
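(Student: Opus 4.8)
The plan is to repeat verbatim the passage-to-the-limit argument used to prove Theorem~\ref{theorem4.1.6} in the ``thick'' case, now using the uniform-in-$\varepsilon$ H\"older estimate furnished by Proposition~\ref{Prop4.8} rather than the mere modulus of continuity of Proposition~\ref{Prop4.1.5}. First I would recall that the penalized solutions $u^\varepsilon$ of (\ref{penmodel2}) are known to converge, along a subsequence, locally uniformly to the unique solution $u$ of the Signorini problem (\ref{model2in}); this is the standard penalization construction, and the obstacle $\tilde\psi^\varepsilon$ used in Section~\ref{subsection4.2} is a fixed smooth extension of $\psi^\varepsilon$ with $\partial_\nu\tilde\psi^\varepsilon=0$ on $\Gamma$, so $\tilde\psi^\varepsilon\to\tilde\psi$ with $\tilde\psi|_{x_n=0}=\psi$ as well.

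Next, set $v^\varepsilon=(u^\varepsilon-\tilde\psi^\varepsilon)_t$, which solves (\ref{4.2.2}). The key input is that Proposition~\ref{Prop4.8} gives
$$|(v^\varepsilon)^+(x,t)-(v^\varepsilon)^+(x_0,t_0)|\le C\big(|x-x_0|^2+|t-t_0|\big)^\alpha$$
with $C$ and $\alpha\in(0,1)$ independent of $\varepsilon$; combined with the uniform $L^\infty$ bound on $v^\varepsilon$ (which comes from the quasi-convexity Theorem~\ref{semiconvexity}, controlling $(u^\varepsilon_{tt})^-$, together with the upper bound on $u^\varepsilon_t$ inherited from the data), this says the family $\{(v^\varepsilon)^+\}$ is equibounded and equi-H\"older on compact subsets of $Q$. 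By Arzel\`a--Ascoli a subsequence converges locally uniformly to some $g\in C^{0,\alpha}_{\mathrm{loc}}$ in the parabolic metric, with the same constant $C$ and exponent $\alpha$. It then remains to identify $g$ with $(u-\psi)_t^+$: since $u^\varepsilon\to u$ and $\tilde\psi^\varepsilon\to\tilde\psi$ locally uniformly, $(u^\varepsilon-\tilde\psi^\varepsilon)\to(u-\tilde\psi)$ in the sense of distributions, hence $v^\varepsilon=(u^\varepsilon-\tilde\psi^\varepsilon)_t\to(u-\tilde\psi)_t$ as distributions; a locally uniformly convergent subsequence of $(v^\varepsilon)^+$ therefore forces $g=((u-\tilde\psi)_t)^+=((u-\psi)_t)^+$ on $\Gamma$ (where $\tilde\psi=\psi$), which is exactly the quantity whose regularity is asserted. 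Thus $(u-\psi)_t^+$ is H\"older continuous.

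The main obstacle is not the compactness step but the justification that the distributional limit of $v^\varepsilon$ really is $(u-\tilde\psi)_t$ and that taking positive parts commutes with the locally uniform limit — i.e. that no mass of $v^\varepsilon$ escapes or concentrates and that the a priori $L^\infty$ control of $v^\varepsilon$, which rests on Theorem~\ref{semiconvexity}, survives in the Signorini geometry with the extended obstacle $\tilde\psi^\varepsilon$. One should check that the smooth extension $\tilde\psi^\varepsilon$ can be chosen so that $\Delta\tilde\psi^\varepsilon-\partial_t\tilde\psi^\varepsilon$ and its needed derivatives are bounded uniformly in $\varepsilon$, so that $f=-(\Delta\tilde\psi^\varepsilon-\partial_t\tilde\psi^\varepsilon)$ has $\|f_t\|_\infty$ under control (this is implicitly used throughout Section~\ref{subsection4.2}). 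Granting this, the limit passage is routine, and the theorem follows at once from Proposition~\ref{Prop4.8}.

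\begin{proof}
It is well known that a subsequence of $u^\varepsilon$, solutions to (\ref{penmodel2}), converges locally uniformly to the unique solution $u$ of (\ref{model2in}); since the extension $\tilde\psi^\varepsilon\to\tilde\psi$ with $\tilde\psi|_{\Gamma}=\psi$, the functions $v^\varepsilon=(u^\varepsilon-\tilde\psi^\varepsilon)_t$ converge to $(u-\psi)_t$ in the sense of distributions. By Proposition~\ref{Prop4.8} the family $(v^\varepsilon)^+$ is uniformly bounded (using the quasi-convexity of Theorem~\ref{semiconvexity} together with the one-sided bound on $u_t^\varepsilon$ from the data) and uniformly H\"older continuous in the parabolic metric on compact subsets of $Q$, with constant $C$ and exponent $\alpha$ independent of $\varepsilon$. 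By Arzel\`a--Ascoli a further subsequence of $(v^\varepsilon)^+$ converges locally uniformly to a function $g$ satisfying the same $C^{0,\alpha}$ estimate; the distributional convergence above identifies $g=((u-\psi)_t)^+$. Hence $(u-\psi)_t^+$ is H\"older continuous.
\end{proof}
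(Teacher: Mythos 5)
Your proof is the paper's one-line argument, spelled out: the paper simply says that a subsequence of $v^\varepsilon$ converges uniformly to the solution of (\ref{model2in}), which, combined with the uniform (in $\varepsilon$) H\"older estimate of Proposition~\ref{Prop4.8}, yields the claim. You have filled in the standard compactness (Arzel\`a--Ascoli) and identification-of-the-limit details, but the route is the same.
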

\begin{proof}
It is well known that a subsequence of $v^\varepsilon$ will converge uniformly to the unique solution of (\ref{model2in}).
\end{proof}

\subsection{The dynamic thin obstacle problem}\label{subsection4.2b}

Given a bounded domain $\Omega$ in $\R^n$ with part of its boundary $\Gamma\subset \partial\Omega$ 
lying on $\R^{n-1}$, a function $\psi$ (the obstacle) defined on $\Gamma$ such that $\max_{\Gamma\times \{0\}}\psi(x',0)>0$, $\psi(x',t)<0$ 
for every $x'\in \partial \Gamma\times(0,T]$ and a function $\phi$, 
with $\phi=0$ on $(\partial\Omega\setminus \Gamma)\times(0,T]$, $\phi\geq \psi$ on $\Gamma\times\{0\}$, find a $u$ such that

\begin{equation}\label{4.2.1b}
\begin{aligned}
\Delta u-\partial_tu= 0\ \ \  & \ \ \ {\rm{in}} \ \  \Omega \times (0,T] \\
\begin{rcases}  
&u\geq \psi, \ \ \alpha \partial_tu+\partial_\nu u\geq 0\ \ \ \\
&(\alpha \partial_tu+\partial_\nu)(u-\psi)=0\ \  \\
\end{rcases} &\ \ \ {\rm{on}} \ \  \Gamma \times (0,T] \\
u=\phi\ \ \  & \ \ \ {\rm{on}} \ \  \partial_p((\overline{\Omega}\setminus\Gamma) \times (0,T]) 
\end{aligned}
\end{equation}
where $\nu$ is the outward unit normal on $\partial\Omega$ and $\alpha$ any constant, $0<\alpha\leq 1$.

The approximate (penalized) problem is then

\begin{equation}\label{4-2penmodel2b}
\begin{cases}  
\partial_tu^\varepsilon-\Delta u^\varepsilon=0,   &{\rm{in}} \ \  \Omega \times (0,T] \cr
-\alpha\partial_tu^\varepsilon-\partial_\nu u^\varepsilon=\beta_\varepsilon(u^\varepsilon
-\psi^\varepsilon) &{\rm{on}} \ \  \Gamma \times (0,T] \cr
u^\varepsilon=\phi^\varepsilon+\varepsilon &{\rm{on}} \ \  \partial_p(\overline{\Omega}\setminus\Gamma \times (0,T]) \cr
\end{cases}
\end{equation}
where $\beta_\varepsilon$ is as in Section \ref{section3}. Let's extend $\psi^\varepsilon$ to all $\Omega$ i.e. we take any function $\tilde{\psi}^\varepsilon(x',x_n,t)$ such that $\tilde{\psi}^\varepsilon(x',0,t)=\psi^\varepsilon(x',t)$, $\tilde{\psi}^\varepsilon(x',x_n,t)<\phi$ on $\partial_p((\Omega\setminus\Gamma)\times(0,T])$ and $\frac{\partial\tilde{\psi}^\varepsilon}{\partial\nu}(x',0,t)=0.$ Subtracting $\tilde{\psi}^\varepsilon$ from the solution we have
\begin{equation}\label{4.2p2ab}
\begin{cases}  
\Delta (u^\varepsilon-\tilde{\psi}^\varepsilon)-\partial_t
(u^\varepsilon-\tilde{\psi}^\varepsilon)=
-(\Delta\tilde{\psi}^\varepsilon-
\partial_t\tilde{\psi}^\varepsilon) &{\rm{in}} \ \  \Omega \times (0,T] \cr
-\alpha\partial_t(u^\varepsilon
-\tilde{\psi}^\varepsilon)-\partial_\nu (u^\varepsilon-\tilde{\psi}^\varepsilon)
=\beta_\varepsilon(u^\varepsilon-\tilde{\psi}^\varepsilon)
+\alpha\partial_t\tilde{\psi}^\varepsilon &{\rm{on}} \ \  \Gamma \times (0,T] \cr
u^\varepsilon-\tilde{\psi}^\varepsilon
=\phi^\varepsilon-\tilde{\psi}^\varepsilon+\varepsilon &{\rm{on}} \ \  \partial_p(\overline{\Omega}\setminus\Gamma \times (0,T]). \cr
\end{cases}
\end{equation}
Differentiate with respect to time and set $v^\varepsilon=(u^\varepsilon-\tilde{\psi}^\varepsilon)_t$ to obtain
\begin{equation}\label{4.2p2bb}
\begin{cases}  
\Delta v^\varepsilon-\partial_tv^\varepsilon
=-(\Delta\tilde{\psi}^\varepsilon-
\partial_t\tilde{\psi}^\varepsilon)_t &{\rm{in}} \ \  \Omega \times (0,T] \cr
-\alpha\partial_tv^\varepsilon-\partial_\nu v^\varepsilon=\beta_\varepsilon'(u^\varepsilon
-\tilde{\psi}^\varepsilon)v^\varepsilon
+\alpha\partial_t\tilde{\psi}_t^\varepsilon &{\rm{on}} \ \  \Gamma \times (0,T] \cr
v^\varepsilon=(\phi^\varepsilon-\tilde{\psi}^\varepsilon)_t &{\rm{on}} \ \  \partial_p(\overline{\Omega}\setminus\Gamma \times (0,T]) \cr
v^\varepsilon=\Delta (\phi^\varepsilon-\tilde{\psi}^\varepsilon) &{\rm{on}} \ \  \Omega \times \{0\}. \cr
\end{cases}
\end{equation}

In order to avoid technicalities, in this more complex situation, and bring forth the main idea, we shall assume throughout this section that $(\Delta\tilde{\psi}^\varepsilon-
\partial_t\tilde{\psi}^\varepsilon)_t=0$ and we work with
\begin{equation}\label{4.2.2b}
\begin{cases}  
\Delta v^\varepsilon-\partial_tv^\varepsilon=0 &{\rm{in}} \ \  \Omega \times (0,T] \cr
-\alpha\partial_tv^\varepsilon-\partial_\nu v^\varepsilon=\beta_\varepsilon'(u^\varepsilon
-\tilde{\psi}^\varepsilon)v^\varepsilon
+\alpha\partial_t\tilde{\psi}_t^\varepsilon &{\rm{on}} \ \  \Gamma \times (0,T] \cr
v^\varepsilon=(\phi^\varepsilon-\tilde{\psi}^\varepsilon)_t &{\rm{on}} \ \  \partial_p(\overline{\Omega}\setminus\Gamma \times (0,T]) \cr
v^\varepsilon=\Delta (\phi^\varepsilon-\tilde{\psi}^\varepsilon) &{\rm{on}} \ \  \Omega \times \{0\}. \cr
\end{cases}
\end{equation}

We shall repeat the approach of Section \ref{subsection4.1} but, as it was done in \cite{AC2010}, 
instead of parabolic cylinders we take "hyperbolic" hypercubes with one of its sides lying on $\Gamma$. 
We normalize again i.e. our solution is between zero and one and we prove (Lemma \ref{lemma4.2.4b}) that, if $v^\varepsilon$ 
is zero on the top center and on $\Gamma$ in such a hypercube, then in a concentric subhypercube into the future $v^\varepsilon$ 
is smaller than one. Then we rescale and repeat. The rescaling, of course, is hyperbolic appropriate for the boundary term 
on $\Gamma$ but diminishes the time derivative in the heat equation; this though does not prevent us to obtain the continuity, as it was done in \cite{AC2010}.

Our first lemma asserts that if $v^\varepsilon$ is "most of the time" very near to its positive maximum in some hypercube sitting in $(\R^{n+1})$ against the hyperplane $x_n=0$ and going backwards in time then in a smaller hypercube into the future, $v^\varepsilon$ is strictly positive.

\begin{lemma}\label{lemma4.2.1b}
Let $Q_R(x_0',0,t_0)\subset \Omega\times(0,T]$ 
where $Q_R(x_0',0,t_0)=B_R(x_0',0)\times(t_0-R,t_0]$, $B_R(x_0',0)=B_R'(x_0)\times(0,1)$, $B'_R(x_0)=\{x'=(x_1,...,x_{n-1}):|x_i'-x_{0i}'|<R,i=1,...,n-1\}$ and $Q_R(x_0',0)=B_R'(x_0')\times(t_0-R,t_0]$. Suppose that $0<v^\varepsilon<1$ in $Q_R(x_0,t_0)$ 
where $v^\varepsilon$ is a solution to (\ref{4.2.2b}). Then there exists a constant $\sigma>0$,
 independent of $\varepsilon$, such that $$\fint_{Q'_R(x'_0,t_0)}(1-v^\varepsilon)^2dx'dt+\fint_{Q_R(x_0,t_0)}(1-v^\varepsilon)^2dxdt<\sigma$$
implies that $$v^\varepsilon\geq \frac{1}{8}$$ in $Q_{r/8}(x_0,t_0)=B_R'(x_0')\times(0,\frac{1}{8})\times(t_0-\frac{R}{8},t_0]$.
\end{lemma}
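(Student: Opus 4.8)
\textbf{Proof proposal for Lemma \ref{lemma4.2.1b}.}

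The plan is to mimic closely the proof of Lemma \ref{lemma4.1.1} (and its boundary counterpart Lemma \ref{lemma4.2.1}), with the single structural change that the parabolic cylinders are replaced by the ``hyperbolic'' hypercubes dictated by the dynamic boundary condition $-\alpha\partial_t v^\varepsilon-\partial_\nu v^\varepsilon=\beta'_\varepsilon(u^\varepsilon-\tilde\psi^\varepsilon)v^\varepsilon+\alpha\partial_t\tilde\psi^\varepsilon_t$ on $\Gamma$. As before, drop $\varepsilon$, shift $(x_0',0,t_0)$ to the origin, set $w=1-v$, and note that $w$ solves $\Delta w-\partial_t w=0$ in $\Omega\times(-R,R]$ with $-\alpha\partial_t w-\partial_\nu w=\beta_t(u-\tilde\psi)$ on $\Gamma$ (using $\alpha\partial_t\tilde\psi^\varepsilon_t=0$ from the standing simplification $(\Delta\tilde\psi^\varepsilon-\partial_t\tilde\psi^\varepsilon)_t=0$). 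First I would derive the energy inequality: multiply by $\zeta^2(w-k)^+$, integrate by parts, and handle the boundary term $\int_{Q_R'}\zeta^2(w-k)^+\partial_\nu w\,dx'dt$ by substituting $\partial_\nu w=-\alpha\partial_t w-\beta_t(u-\tilde\psi)$. The term with $\beta_t=\beta'(u-\tilde\psi)(u-\tilde\psi)_t$ is, up to sign, controlled exactly as in Lemma \ref{lemma4.2.1} using $\|\beta\|_\infty<\infty$, $\beta'\ge 0$, and the time quasi-convexity $\|(u_{tt})^-\|_\infty<\infty$ from Theorem \ref{semiconvexity}; the new term $-\alpha\int_{Q_R'}\zeta^2(w-k)^+\partial_t w\,dx'dt$ is integrated by parts in $t$ to produce a good boundary-in-time term at the top plus a term $\alpha\int_{Q_R'}\partial_t(\zeta^2(w-k)^+)\,w\,dx'dt$ absorbed into the cutoff-derivative terms (this is precisely the mechanism of \cite{AC2010} that trades a boundary time-derivative for cutoff losses). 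The outcome should be an energy inequality of the same shape as \eqref{eq_4.21}, with the lateral integral over $Q_R'$ now carrying an extra $(w-k)^+$ contribution weighted by $\partial_t\zeta$, which is harmless.

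Next I would set up the De Giorgi iteration. Using the hyperbolic scaling one takes $R_m=\tfrac14(1+2^{-m})$, cutoffs $\zeta_m$ adapted to the hypercubes $Q_m$ with $|\grad\zeta_m|\le C2^m$ and $0\le\partial_t\zeta_m\le C4^m$, and $w_m=(w-k_m)^+$ with $k_m=\tfrac12(1-2^{-m})$. The only genuinely new analytic point compared with the interior lemma is the presence of the lateral $Q_R'$ integrals on the right-hand side: as in Lemma \ref{lemma4.2.1} one estimates $\int_{Q_{m-1}'}(w_{m-1}\zeta_{m-1})^2\,dx'dt$ in terms of the interior quantity via the trace/divergence identity $\int_{Q_{m-1}'}(w_{m-1}\zeta_{m-1})^2 = -2\int_{Q_{m-1}}(w_{m-1}\zeta_{m-1})(w_{m-1}\zeta_{m-1})_{x_n}$, followed by H\"older and Young, bounding it by $\eta\int|\grad(\zeta_{m-1}w_{m-1})|^2 + C\eta^{-1}\int(\zeta_{m-1}w_{m-1})^2$. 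Feeding this and the parabolic Sobolev inequality into the energy inequality yields a recursion of the form $I_m\le C2^{Cm}(I_{m-2}^{1+2/n}+I_{m-1}^{1/2}I_{m-2}^{1/2+1/n})$ for the quantity $I_m=\max_t\int(\zeta_m w_m)^2 + \int|\grad(\zeta_m w_m)|^2$, which — just as at the end of the proof of Lemma \ref{lemma4.2.1} — converges to $0$ provided $I_0$ (equivalently $\sigma$) is chosen small, with $M>1$ and $\alpha_0$ fixed by the usual geometric bookkeeping $I_m\le\alpha_0 M^{-m}$. Then $w\le k_\infty=\tfrac12$ on the limiting hypercube, i.e. $v\ge\tfrac12$; that the conclusion is stated with $\tfrac18$ rather than $\tfrac12$ and on $Q_{r/8}$ simply reflects the anisotropy of the hyperbolic hypercubes (the time-slab shrinks to length $R/8$ while the spatial cross-section stays at $B'_R$), and requires only renaming the final truncation level.

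The step I expect to be the main obstacle is the bookkeeping of the hyperbolic scaling through the energy estimate: under $v(x,t)\mapsto v(Rx',Rx_n,Rt)$ (linear, not parabolic, in time, appropriate for the $\alpha\partial_t$ boundary term) the heat operator picks up a vanishing factor on $\partial_t$, so one must check that the iteration closes \emph{uniformly} as this factor degenerates and, crucially, that the smallness threshold $\sigma$ and the constants $M,\alpha_0$ do not deteriorate — exactly the subtlety resolved in \cite{AC2010}. Concretely, the danger is that the absorbed term $\alpha\int_{Q_R'}\partial_t(\zeta^2 w_m)\,w_m\,dx'dt$ scales worse than the good terms; one must verify that with $0<\alpha\le1$ fixed it is genuinely lower order (it contributes at the level of the $\partial_t\zeta_m$ terms already present) and that the time-maximum term on the left retains a coefficient bounded below independently of the scale. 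Once this is checked the remainder is a routine transcription of Lemmas \ref{lemma4.1.1}--\ref{lemma4.2.1}.
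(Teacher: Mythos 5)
Your proposal captures the correct starting point (penalize, subtract the obstacle, set $w=1-v$, derive an energy inequality by testing with $\zeta^2(w-k)^+$ and integrating the boundary time-derivative by parts), but the De Giorgi machinery you then transcribe from Lemma~\ref{lemma4.2.1} does not actually survive the transition to the dynamic problem, and the conclusion you reach is not the one the lemma asserts.

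The structural obstacle, which you flag in your last paragraph but do not resolve, is that the hypercubes $Q_R$ here have time-thickness $\sim R$, not $\sim R^2$. This breaks the ingredients your recursion relies on: the parabolic Sobolev inequality in $\R^n$ (which produces the exponent $1+2/n$ in Lemma~\ref{lemma4.2.1}) is calibrated to parabolic cylinders $B_R\times(-R^2,0]$, and the trace/divergence identity $\int_{Q'}(w\zeta)^2=-2\int_{Q}(w\zeta)(w\zeta)_{x_n}$ needs a cutoff $\zeta$ compactly supported in $x_n$, whereas the cutoffs one must use here are deliberately \emph{independent of $x_n$} (they depend only on $x'$ and $t$, because the dynamic condition lives on $\Gamma$ and the hyperbolic scaling degenerates the $\partial_t$ in the bulk). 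As a result, the paper's iteration quantity is not the one you write down: the interior gradient is only integrated over a thin $x_n$-slab $(0,\delta^m/2)$ that shrinks geometrically, and the recursion produced is $I_m\le C4^{m(1+1/(n-1))}I_{m-3}^{1+1/(n-1)}$, driven by Sobolev on the hyperplane $\R^{n-1}$ (exponent $1+1/(n-1)$, not $1+2/n$). The mechanism relating boundary and interior integrals is heat-kernel extension $u\mapsto u\ast H(x_n)$, following \cite{AC2010}, not the divergence theorem, and the induction is a \emph{dual} one (on $I_m\le M^{-m}$ and on vanishing of $u_m$ on the slice $Q_m'\times\{\delta^m/2\}$ simultaneously).

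Consequently, the De Giorgi step only yields $w\le 5/8$ \emph{on the boundary piece} $Q_{R/4}'$ (note the truncation levels $k_m\to 5/8$, not $1/2$). The statement $v\ge\frac18$ in $Q_{R/8}$ is then not a cosmetic renaming: it requires a separate barrier argument. One constructs $\bar{w}$ caloric in $Q_{R/4}$ with $\bar{w}=1$ on the lateral/initial boundary and $\bar{w}=5/8$ on $Q_{R/4}'$, observes $\bar{w}<7/8$ in $Q_{R/8}$, and applies the maximum principle to get $w\le\bar{w}<7/8$, i.e.\ $v>1/8$. This propagation-by-barrier step is the reason the conclusion is $1/8$ on $Q_{R/8}$ rather than $1/2$ on the full limiting hypercube, and it is missing from your proposal.
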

\begin{proof}
For simplicity we drop the superscript $\varepsilon$, shift $(x_0,0,t_0)$ to $(0,0,0)$ and write $Q_R$ for $Q_R(0,0,0)$. We first derive an energy inequality associated to our problem. Set $w=1-v$ then the problem becomes
\begin{equation}\label{w4.2.2b}
\begin{cases}  
\Delta w-\partial_tw=0 &{\rm{in}} \ \  \Omega \times (-T,T] \cr
\alpha\partial_tw+\partial_\nu w=\beta_t(u-\tilde{\psi})+\alpha\partial_t\tilde{\psi}_t &{\rm{on}} \ \  \Gamma \times (-T,T] \cr
w=1-(\phi-\tilde{\psi})_t &{\rm{on}} \ \  \partial_p(\overline{\Omega}\setminus\Gamma \times (-T,T]) \cr
w=1-\Delta (\phi-\tilde{\psi}) &{\rm{on}} \ \  \Omega \times \{-T\}. \cr
\end{cases}
\end{equation}
Choose a smooth cut-off function $\zeta$ vanishing near the parabolic boundary of $Q_R$ except on $Q_R'$ and $k\geq 0$. Multiply the above by $\zeta^2(w-k)^+$ and integrate by parts to obtain
$$\int_{Q_R}[\grad(\zeta^2(w-k)^+)\grad w+\zeta^2(w-k)^+\partial_tw]dxdt-\int_{Q_R'}\zeta^2(w-k)^+\partial_\nu wdx'dt=0$$
and
$$\int_{Q_R}[|\grad(\zeta^2(w-k)^+)|^2+\frac{1}{2}\partial_t(\zeta^2(w-k)^+)^2]dxdt-\int_{Q_R'}\zeta^2(w-k)^+(\beta_t(u-\psi)+\alpha\psi_{tt}
-\alpha\partial_tw)dx'dt=$$
$$=\int_{Q_R}((w-k)^+)^2(|\grad \zeta|^2+\zeta\zeta_t)dxdt$$
and
\begin{eqnarray}\label{p5-1b}
\frac{\alpha}{2}\int_{Q_R'}\partial_t[(\zeta(w-k)^+)^2]dx'dt&+& \int_{Q_R}\bigg[\frac{1}{2}\partial_t[(\zeta(w-k)^+)^2]+|\grad(\zeta(w-k)^+)|^2\bigg]dxdt\nonumber\\ 
&=&\int_{Q_R'}\partial_t[\zeta^2(w-k)^+\beta(u-\psi)]dx'dt-\int_{Q_R'}\zeta^2\partial_t(w-k)^+\beta(u-\psi)dx'dt\nonumber\\ 
&+&2\int_{Q_R'}\zeta\partial_t\zeta(w-k)^+\beta(u-\psi)dx'dt+\alpha\int_{Q_R'}\zeta\partial_t
\zeta((w-k)^+)^2dx'dt\nonumber\\ 
&+&\alpha\int_{Q'_R}\zeta^2(w-k)^+\psi_{tt}dx'dt+\int_{Q_R}(|\grad \zeta|^2+\zeta\partial_t\zeta)[(w-k)^+]^2dxdt.\nonumber\\
\end{eqnarray}
Now, using the fact that $\beta$ is bounded and negative, $(u-\psi)_{tt}$ is bounded below and since the upper limit of the $t-$integration $t=0$ can be replaced by any $-R\leq t\leq 0$, we obtain
$$\frac{\alpha}{2}\max_{-R\leq t\leq 0}\int_{B_R'}[(w-k)^+\zeta]^2dx'+\frac{1}{2}\max_{-R\leq t\leq 0}\int_{B_R}[(w-k)^+\zeta]^2dx+\int_{Q_R}|\grad ((w-k)^+\zeta)|^2dxdt $$
$$\leq ||\beta||_{\infty}||(u-\psi)_{tt}^-||_{\infty}\int_{Q_R\cap\{w>k\}}\zeta^2dx'dt+2||\beta||_{\infty}\int_{Q_R'}(w-k)^+|\partial_t\zeta|dx'dt$$
$$+\alpha\int_{Q_R'}[(w-k)^+]^2|\partial_t\zeta|dx'dt+\alpha||\psi_{tt}||_{\infty}\int_{Q_R'}(w-k)^+dx'dt+\int_{Q_R}[(w-k)^+]^2(|\grad\zeta|^2+|\partial_t\zeta|)dxdt$$
or
$$\alpha\max_{-R\leq t\leq 0}\int_{B_R'}[(w-k)^+\zeta]^2dx'+\max_{-R\leq t\leq 0}\int_{B_R}[(w-k)^+\zeta]^2dx+\int_{Q_R}|\grad ((w-k)^+\zeta)|^2dxdt$$
$$\leq C\bigg(\int_{Q_R'}[(w-k)^+]^2|\partial_t\zeta|+(w-k)^+(1+|\partial_t\zeta|)dx'dt+\int_{Q_R'\cap\{w>k\}}
\zeta^2dx'dt\bigg)$$
$$+C\int_{Q_R}[(w-k)^+]^2(|\grad\zeta|^2+|\partial_t\zeta|)dxdt$$
where $C=2\max\{||\beta||_{\infty},||(u-\psi)_{tt}^-||_{\infty},1,\alpha||\psi_{tt}||_{\infty}\}$ and, a fortiori, we have the "energy inequality" 
$$\alpha\max_{-R\leq t\leq 0}\int_{B_R'}[(w-k)^+\zeta]^2dx'+\int_{Q_R}|\grad ((w-k)^+\zeta)|^2dxdt$$
\begin{equation}\label{eq_4.21b}
\leq C\bigg(\int_{Q_R'}[(w-k)^+]^2|\partial_t\zeta|+(w-k)^+(1+|\partial_t\zeta|)+\zeta^2\chi_{\{w>k\}}dx'dt+\int_{Q_R}[(w-k)^+]^2(|\grad\zeta|^2+|\partial_t\zeta|)dxdt\bigg).
\end{equation}

Now that we have our energy inequality we shall obtain an iterative sequence of inequalities.
 More precisely, the method consists in taking a sequence of decreasing cutoffs in space and 
time $\zeta_m$ that converges to the indicator function of $Q_{R/4}$ and simultaneously a series 
of cutoffs of the graph of $u$, $u_m$ that converge to $(w-7/8)^+$ and prove by iteration that in 
the limit $(w-7/8)^+=0$ on $Q_{r/4}$. We follow closely the corresponding argument in \cite{AC2010}. We, therefore, define 
$$k_m=\frac{9}{16}+\frac{1}{16}\bigg(1-2^{-m}\bigg)\ \ \ R_m=\frac{R}{4}\bigg(1+\frac{1}{2^m}\bigg)$$
$$Q_m'=\{(x_1,...,x_n,t):-R_m\leq x_i\leq R_m, -R_m\leq t\leq 0\}$$
and we choose the cutoff functions $\zeta_m$ to depend only on $x'$ and $t$ such that $\chi_{Q_{m+1}'}\leq \zeta_m\leq \chi_{Q_{m}'}$, $|\grad\zeta_m|\leq C2^m$ and $|\partial_t \zeta_m|\leq C2^m$. We set $u_m=(u-k_m)^+$ and we denote by
$$I_m=\int_{Q'_R}(\zeta_mu_m)^2dx'dt+\int_0^{\delta^m/2}\int|\grad(\zeta_mu_m)|^2dxdt$$
where $0<\delta<1$ is chosen such that $2^n2^{-\frac{(n+6)2^{-m-1}}{\delta^m}}\leq 2^{-m-4}$ holds.
We also choose $M$ as in \cite{AC2010} to satisfy $2^{n+1}M^{-\frac{m}{2}}(\delta^n)^{-m-1}\leq 2^{-m-6}$, $M^{-m}\geq C4^{m(1+\frac{1}{n-1})}M^{-(m-3)(1+\frac{1}{n-1})}$, $m\geq 14(n-1)$.

We want to prove simultaneously that for every $m\geq 0$, $I_m\leq M^{-m}$ and that $u_m=0$ on $Q_{m}'\times \{\frac{\delta^m}{2}\}$.  The proof is by induction and is identical with Step 2a and Step 2b of Lemma 2.2 in \cite{AC2010} except that 
$$||u_{\chi_{Q_R'}}\ast H(x_n)||\leq ||H(y)||_{\infty(\{x_n\geq 1\})}\int_{Q_R'}udx'dt\leq \frac{2^{n+2}}{\pi^{\frac{n}{2}}}\bigg(\frac{n+2}{2e}\bigg)^{n+2}|Q_R'|^{1/2}\sigma^{1/2}\leq \frac{1}{64}$$
for $\sigma$ small enough. So we concentrate on Step 2c, where we will show that 
$$I_m\leq C4^{m(1+\frac{1}{n-1})}I_{m-3}^{1+\frac{1}{n-1}}, \ \ m\geq 14n-13.$$
By the energy inequality,
$$I_m\leq \int(w_m\zeta_{m-1})^2dx'dt
+C\bigg[C2^m\int(w_m\zeta_{m-1})^2dx'dt+(1+C2^m)\frac{1}{2}\int(w_m\zeta_{m-1})^2dx'dt\bigg]$$ $$+C\bigg[(1+C2^m)\frac{1}{2}|Q_{m-1}\cap\{w_m\neq 0\}|+|Q'_{m-1}\cap\{w_m\neq 0\}|+(C2^m)^2\int(w_m\zeta_{m-1})^2dxdt\bigg]$$
$$+C\bigg[\frac{1}{2}\int(w_m\zeta_{m-1})^2dxdt+\frac{1}{2}|Q_{m-1}\cap\{w_m\neq 0\}|\bigg]$$
where we have used Young's inequality. Since $w_m<w_{m-1}$ and $\{w_m\neq 0\}=\{w_{m-1}>2^{-m-4}\}$, we have 
$$I_m\leq C2^m\int(w_{m-1}\zeta_{m-1})^2dx'dt+C4^m\int(w_{m-1}
\zeta_{m-1})^2dxdt.$$
Also, the integral of the second term i.e
$$\int(w_{m-1}\zeta_{m-1})^2dxdt\leq \int|(w_{m-2}\zeta_{m-2})\ast H(x_n)|^2dxdt\leq ||H||^2_{L^1(Q_R)}\int(w_{m-2}\zeta_{m-2})^2dx'dt.$$
Therefore
\begin{eqnarray}
I_m &\leq & C4^m\int(w_{m-2}\zeta_{m-2})^2dx'dt\nonumber\\
&\leq &  C4^m\bigg(\int(w_{m-2}\zeta_{m-2})^{2\frac{n}{n-1}}dx'dt\bigg)^{\frac{n-1}{n}}|\{w_{m-2}\neq 0\}\cap Q_{m-2}'|^{\frac{1}{n}}\nonumber\\
&\leq & C4^{m(1+\frac{1}{n-1})}\int(w_{m-3}\zeta_{m-3})^{2\frac{n}{n-1}}dx'dt.
\end{eqnarray}
By Sobolev's inequality
$$I_m\leq C4^{m(1+\frac{1}{n-1})}\bigg(\int(w_{m-3}\zeta_{m-3})^2dx'dt+\int|\Lambda^{1/2}(w_{m-3}\zeta_{m-3})|^2dx'dt\bigg)^{\frac{n}{n-1}}$$
where $\Lambda(w_{m-3}\zeta_{m-3})=-\frac{\partial}{\partial x_n}(w_{m-3}\zeta_{m-3})$. Since 
$$\int|\Lambda^{1/2}(w_{m-3}\zeta_{m-3})|^2dx'dt\leq \int|\grad(w_{m-3}\zeta_{m-3})|^2dxdt$$
we have $$I_m\leq C4^{m(1+\frac{1}{n-1})}I_{m-3}^{1+\frac{1}{n-1}}\ \ {\rm{for\ every}}\ \ m\geq 14(n-1)+1$$
i.e. $I_m\rightarrow 0$ as $m\rightarrow \infty$ provided that
$$I_0\leq C^{-(n-1)}4^{-n(n-1)}=:\sigma.$$

Hence to complete the proof, consider the function $\bar{w}$ defined by 

\begin{equation}\label{barwb}
\begin{cases}  
\Delta \bar{w}-\partial_t\bar{w}=0 &{\rm{in}} \ \  Q_{R/4} \cr
\bar{w}=1 &{\rm{on}} \ \  \partial_p(\overline{Q_{R/4}}\setminus\{x_n=0\} \cr
\bar{w}=\frac{5}{8} &{\rm{on}} \ \  Q_{R/4}'. \cr
\end{cases}
\end{equation}
Then $\bar{w}<7/8$ in $Q_{R/8}$ and by the maximum principle $w\leq \bar{w}$. 
\end{proof}

Our second lemma asserts that if $v^\varepsilon$ is very tiny "most of the time" in some hypercube (as above) then, in a smaller concentric hypercube, $v^\varepsilon$ goes down from $1$ to $7/8$.

\begin{lemma}\label{lemma4.2.2b}
Let $Q_R(x_0',0,t_0)$ be as in Lemma \ref{lemma4.2.1b}. Suppose that $v^\varepsilon$ is a subsolution to (\ref{4.2.2b}) and that $0<v^\varepsilon<1$ in $Q_R(x_0',0,t_0)$. Then there exists a constant $\bar{\sigma}>0$, independent of $\varepsilon$, such that 
$$\fint_{Q'_R(x'_0,t_0)}(v^\varepsilon)^2dx'dt+\fint_{Q_R(x_0',0,t_0)}(v^\varepsilon)^2dxdt<\bar{\sigma}$$
implies that $$v^\varepsilon\leq \frac{7}{8}$$ in $Q_{r/8}(x_0,0,t_0)$.
\end{lemma}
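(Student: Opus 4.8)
The plan is to follow the proof of Lemma \ref{lemma4.2.1b} line by line, the only genuine difference being that the energy inequality is now simpler because the boundary penalization term has a favourable sign and can be discarded. First I would drop the superscript $\varepsilon$, shift $(x_0',0,t_0)$ to the origin, and record the subsolution structure. Under the standing assumption $(\Delta\tilde\psi^\varepsilon-\partial_t\tilde\psi^\varepsilon)_t=0$ the interior relation reads $\Delta v-\partial_t v\ge 0$ on $\Omega\times(-T,T]$ (so $v$ is subcaloric), while on $\Gamma$ one has $-\alpha\partial_t v-\partial_\nu v\ge\beta'_\varepsilon(u-\tilde\psi)\,v+\alpha\partial_t\tilde\psi_t$. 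Since $\beta'_\varepsilon\ge 0$ and $v>0$ wherever $(v-k)^+>0$ (as $k\ge 0$), after testing against $\zeta^2(v-k)^+\ge 0$ the term $-\beta'_\varepsilon(u-\tilde\psi)v\,\zeta^2(v-k)^+\le 0$ and is simply thrown away. This is exactly the spot where the proof of Lemma \ref{lemma4.2.1b} had to invoke the time quasi-convexity of $u$; here no such input is required, which is the reason the present energy inequality is ``much simpler''.

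Next I would choose a cut-off $\zeta$ vanishing near $\partial_p Q_R$ except on $Q_R'$, test the interior inequality against $\zeta^2(v-k)^+$, integrate by parts over $Q_R$, and insert $\partial_\nu v=-\alpha\partial_t v-\beta'_\varepsilon(u-\tilde\psi)v-\alpha\partial_t\tilde\psi_t$ on $Q_R'$. The term $-\alpha\int_{Q_R'}\zeta^2(v-k)^+\partial_t v\,dx'dt$, after an integration by parts in $t$ (using that $\zeta$ vanishes at the bottom $t=-R$), produces $\tfrac{\alpha}{2}\max_{-R\le t\le 0}\int_{B_R'}[(v-k)^+\zeta]^2dx'$ on the left together with an $\int_{Q_R'}[(v-k)^+]^2|\partial_t\zeta|$ remainder; the $-\beta'_\varepsilon v\,\zeta^2(v-k)^+$ contribution is nonpositive and dropped; and $-\alpha\int_{Q_R'}\zeta^2(v-k)^+\partial_t\tilde\psi_t$ is absorbed into $C\int_{Q_R'}(v-k)^+$. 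Together with the usual bulk terms this gives
\begin{equation}\label{energy422b}
\alpha\max_{-R\le t\le 0}\int_{B_R'}[(v-k)^+\zeta]^2dx'+\int_{Q_R}|\grad((v-k)^+\zeta)|^2dxdt\le C\Big(\int_{Q_R'}\big([(v-k)^+]^2|\partial_t\zeta|+(v-k)^+\big)dx'dt+\int_{Q_R}[(v-k)^+]^2(|\grad\zeta|^2+|\partial_t\zeta|)dxdt\Big),
\end{equation}
with $C=2\max\{1,\alpha||\tilde\psi_{tt}||_\infty\}$ --- that is, precisely (\ref{eq_4.21b}) with $w$ replaced by $v$ and with the $\zeta^2\chi_{\{w>k\}}$ term deleted.

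With (\ref{energy422b}) available I would run the De Giorgi iteration verbatim as in Lemma \ref{lemma4.2.1b}: pick $k_m$ increasing to a fixed level $\kappa_0<7/8$ and $R_m$ decreasing to $R/4$, use the hyperbolic cut-offs $\zeta_m=\zeta_m(x',t)$ with $\chi_{Q'_{m+1}}\le\zeta_m\le\chi_{Q'_m}$, set $v_m=(v-k_m)^+$ and $I_m=\int_{Q'_R}(\zeta_m v_m)^2dx'dt+\int_0^{\delta^m/2}\int|\grad(\zeta_m v_m)|^2dxdt$, transfer the hyperplane $L^2$-quantities to the interior slab by convolving with the heat kernel $H$ as in \cite{AC2010}, apply the $(n-1)$-dimensional Sobolev inequality on $\{x_n=0\}$, and reach the recursion $I_m\le C4^{m(1+\frac1{n-1})}I_{m-3}^{1+\frac1{n-1}}$ for $m$ large. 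Hence $I_m\to 0$ provided $I_0$ --- which is bounded by the averaged quantity in the hypothesis --- is $\le\bar\sigma:=C^{-(n-1)}4^{-n(n-1)}$, so that $v\le\kappa_0$ on $Q'_{R/4}$. To finish I would compare $v$ with the caloric barrier $\bar v$ equal to $1$ on $\partial_p Q_{R/4}\setminus\{x_n=0\}$ and equal to $\kappa_0$ on $Q'_{R/4}$: since $v\le 1$ everywhere and $v\le\kappa_0$ on the hyperplane face, the maximum principle gives $v\le\bar v$ on $Q_{R/4}$, and an explicit estimate of $\bar v$ gives $\bar v<7/8$ on $Q_{R/8}$, which is the asserted hypercube $Q_{r/8}(x_0,0,t_0)$.

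The part I expect to be delicate is, as in Lemma \ref{lemma4.2.1b}, the clash of scalings: the boundary operator $\alpha\partial_t v+\partial_\nu v$ is homogeneous under the hyperbolic dilations $(x,t)\mapsto(\lambda x,\lambda t)$, whereas $\Delta v-\partial_t v$ is parabolic, so $I_m$ must retain the thin interior slab $\{0<x_n<\delta^m/2\}$ and the passage from $Q'_R$ to the interior has to go through the heat-kernel convolution bound $||(v\chi_{Q'_R})\ast H(x_n)||\le\tfrac{1}{64}$ for $\sigma$ small, exactly as in \cite{AC2010}. Everything conceptually new here --- namely that $\beta'_\varepsilon\ge 0$ makes the boundary penalization term disappear, so that quasi-convexity of $u$ plays no role in this lemma --- is immediate; the remainder is the bookkeeping already carried out for Lemma \ref{lemma4.2.1b}.
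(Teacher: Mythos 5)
Your proposal is correct and follows essentially the same route as the paper: the energy inequality is derived by the same integration by parts, the boundary term involving $\beta'_\varepsilon$ is discarded by its sign (so that the time quasi-convexity of $u$ is never needed), and the De Giorgi iteration plus barrier argument are run exactly as in Lemma~\ref{lemma4.2.1b}. Your constant $\bar\sigma=C^{-(n-1)}4^{-n(n-1)}$ with $C=2\max\{1,\alpha\|\tilde\psi_{tt}\|_\infty\}$ matches the paper's.
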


\begin{proof}
The proof is identical to the proof of Lemma \ref{lemma4.2.1b} except from the energy inequality. For simplicity again we drop the $\varepsilon$ and take $(x_0',0,t_0)=(0,0,0)$ with $Q_R=Q_R(0,0,0)$. Since $\beta'\geq 0$, $v$ satisfies 
\begin{equation}\label{4.2.2bb}
\begin{cases}  
\Delta v-\partial_tv=0 &{\rm{in}} \ \  Q_R \cr
-\alpha\partial_tv-\partial_\nu v\geq \alpha\partial_t\psi_t &{\rm{on}} \ \  Q_R' \cr
\end{cases}
\end{equation}
Choose again, a smooth cutoff function $\zeta$ vanishing near the parabolic boundary of $Q_R$ except on $Q_R'$ and $k\geq 0$. Multiply the above by $\zeta^2(v-k)^+$ and integrate by parts to obtain
$$\int_{Q_R}[\grad(\zeta^2(v-k)^+)\grad v+\zeta^2(v-k)^+\partial_tv]dxdt-\int_{Q_R'}\zeta^2(v-k)^+\partial_\nu vdx'dt\leq 0$$
or 
$$\int_{Q_R}\bigg[|\grad(\zeta(v-k)^+)|^2+\frac{1}{2}\partial_t((\zeta(v-k)^+)^2)\bigg]dxdt+\int_{Q_R'}\zeta^2(v-k)^+\bigg[\alpha\psi_{tt}+\alpha\partial_tv\bigg]dx'dt$$ $$\leq \int_{Q_R}((v-k)^+)^2\bigg[|\grad \zeta|^2+\zeta\partial_t\zeta\bigg]dxdt$$
and
$$\frac{\alpha}{2}\int_{Q_R'}\partial_t[(\zeta(v-k)^+)^2]dx'dt+\int_{Q_R}\bigg[\partial_t[(\zeta(v-k)^+]^2)+|\grad(\zeta(v-k)^+)|^2\bigg]dxdt$$
$$\leq \alpha\int_{Q_R'}[(v-k)^+]^2|\partial_t\zeta|dx'dt+\alpha||\psi_{tt}||_{L^{\infty}}\int_{Q_R'}(v-k)^+dx'dt+\int_{Q_R}[(v-k)^+]^2(|\grad \zeta|^2+|\partial_t\zeta|)dxdt.$$
And again taking as upper limit any $-R\leq t\leq 0$ we obtain
$$\alpha\max_{-R\leq t\leq 0}\int_{B_R'}[\zeta(v-k)^+]^2dx'+\int_{Q_R}|\grad(\zeta(v-k)^+)|^2dxdt$$
$$\leq \bar{C}\bigg\{\int_{Q_R'}\bigg[[(v-k)^+]^2|\partial_t\zeta|+(v-k)^+\bigg]dx'dt+\int_{Q_R}[(v-k)^+]^2(|\grad \zeta|^2+|\partial_t\zeta|)dxdt\bigg\}$$
where $\bar{C}=2\max\{1,\alpha||\psi_{tt}||_{L^{\infty}}\}$.

Now, since we have our energy inequality, the rest is as that of Lemma \ref{lemma4.2.1b} and we define $$\bar{\sigma}:=\bar{C}^{-(n-1)}4^{-n(n-1)}.$$
\end{proof}

We proceed, now, by using the parabolic version of DeGiorgi's isoperimetric lemma. This lemma is proved in \cite{CaffVass} and with proper adjustments applies to our situation. We state it as our next lemma.

\begin{lemma}\label{lemma4.2.3b}
Given $\epsilon_1>0$, there exists a $\delta_1>0$ such that for every subsolution $v^\varepsilon$ to (\ref{4.2.2b}) satisfying $0<v^\varepsilon<1$ in $Q_R$, $$|\{(x,t)\in Q_R:v^\varepsilon=0\}|\geq \sigma_0|Q_R|$$ if $$|\{(x,t)\in Q_R:0<v^\varepsilon<1/2\}<\delta_1|Q_R|$$ 
then 
$$\fint_{Q'_{R'}}\bigg[(v^\varepsilon-\frac{1}{2})^+\bigg]^2dx'dt+\fint_{Q_{R'}}\bigg[(v^\varepsilon-\frac{1}{2})^+\bigg]^2dxdt<C\sqrt{\epsilon_1}$$
where $R'=\frac{\sigma_0}{2}R$ for $\sigma_0>0$.
\end{lemma}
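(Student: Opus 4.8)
The plan is to run the compactness and contradiction argument of \cite{CaffVass}, which is the parabolic substitute for De~Giorgi's isoperimetric lemma, adapting its ingredients to the dynamic boundary situation. The first ingredient is already available: the energy inequality for subsolutions of \eqref{4.2.2b} derived inside the proof of Lemma~\ref{lemma4.2.2b} (take there $k=0$, use cutoffs supported in a slightly larger hypercube $Q_{R''}$ with $R'<R''<R$, and recall $0<v^\varepsilon<1$) yields, uniformly in $\varepsilon$, bounds for $v^\varepsilon$ in $L^\infty(Q_{R''})$, for $\nabla v^\varepsilon$ in $L^2(Q_{R''})$, and for $\operatorname{tr}_\Gamma v^\varepsilon$ in $L^\infty_tL^2(Q'_{R''})$. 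The second ingredient is a one-sided control on the time derivative of the ``dynamic mass'' of $v^\varepsilon$ (the one including the $\Gamma$-term), supplied by the equation and the boundary inequality; the third is an Aubin--Lions compactness. Being by contradiction, the argument only delivers qualitative smallness, which is then recorded in the stated form by choosing $\delta_1$ so small that the smallness is $<C\sqrt{\epsilon_1}$.

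Assume the conclusion fails: there are $\epsilon_1>0$, $\delta_1=1/j\to0$, and subsolutions $v_j$ of \eqref{4.2.2b} with $0<v_j<1$ on $Q_R$, $|\{v_j=0\}\cap Q_R|\geq\sigma_0|Q_R|$, $|\{0<v_j<1/2\}\cap Q_R|<\tfrac1j|Q_R|$, yet
$$\fint_{Q'_{R'}}\big[(v_j-\tfrac12)^+\big]^2\,dx'dt+\fint_{Q_{R'}}\big[(v_j-\tfrac12)^+\big]^2\,dxdt\ \geq\ c_0\ >\ 0 .$$
Testing the weak form of \eqref{4.2.2bb} against a fixed $\phi=\phi(x)\geq0$ supported in the spatial hypercube (vanishing on its boundary except on $\Gamma$), and using $\Delta v_j-\partial_t v_j=0$ together with the favorable sign of the boundary inequality, one obtains
$$\frac{d}{dt}\Big(\int_\Omega v_j\,\phi\,dx+\alpha\int_\Gamma v_j\,\phi\,dS\Big)\ \leq\ g_j(t)\,\|\phi\|_{H^1},\qquad \sup_j\|g_j\|_{L^2_t}<\infty .$$
Combined with the energy bounds, this one-sided bound is precisely what drives the compactness of \cite{CaffVass}: along a subsequence $v_j\to v_\infty$ in $L^2(Q_{R''})$ and a.e., $\operatorname{tr}_\Gamma v_j\to\operatorname{tr}_\Gamma v_\infty$ in $L^2(Q'_{R''})$, and $v_\infty(\cdot,t)\in H^1$ for a.e.\ $t$.

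One now reads off the structure of $v_\infty$, which is where the isoperimetric effect enters. From $|\{0<v_j<1/2\}|\to0$ and a.e.\ convergence, $|\{0<v_\infty<1/2\}\cap Q_R|=0$, so $v_\infty\in\{0\}\cup[\tfrac12,1]$ a.e.; from the weak-$*$ limit $g$ of $\chi_{\{v_j=0\}}$, which vanishes on $\{v_\infty>0\}$ and satisfies $\int g\geq\sigma_0|Q_R|$, one gets $|\{v_\infty=0\}\cap Q_R|\geq\sigma_0|Q_R|$. On a.e.\ slice $t$, $\min(2v_\infty(\cdot,t),1)\in H^1$ takes only the values $0$ and $1$, hence is a.e.\ constant on the connected spatial box; thus $v_\infty(\cdot,t)$ is, up to a null set, $\equiv0$ or $\in[\tfrac12,1]$, which splits the time interval into $T_0$ and $T_+$ with $|T_0|\geq\sigma_0 R=2R'$ (since $\{v_\infty=0\}=\bigcup_{t\in T_0}(B\times\{t\})$). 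Passing the one-sided inequality above to the limit and evaluating at times of $T_0$, where $\int_\Omega v_\infty\phi+\alpha\int_\Gamma v_\infty\phi=0$, prevents this quantity from ever re-crossing, later in time, its positive threshold $\tfrac12(\int_\Omega\phi+\alpha\int_\Gamma\phi)$; hence $T_0$ is, up to a null set, a terminal subinterval of the time interval, so it contains the top portion of length $2R'$ and $v_\infty\equiv0$ on $Q_{R'}$ and on $Q'_{R'}$. If $T_0$ were null then $v_\infty\geq\tfrac12$ a.e.\ on $Q_R$, contradicting $|\{v_\infty=0\}|\geq\sigma_0|Q_R|>0$; otherwise the left side of the displayed lower bound converges to $0$ (by the $L^2$ convergence of $v_j$ and of its traces together with the Lipschitz map $r\mapsto(r-\tfrac12)^+$), contradicting $c_0>0$.

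The main obstacle is the compactness near $\Gamma$: getting $L^2_{loc}$ precompactness of $\{v^\varepsilon\}$ and of its $\Gamma$-traces for a subsolution that only obeys the inequality boundary condition $-\alpha\partial_t v^\varepsilon-\partial_\nu v^\varepsilon\geq\alpha\psi_{tt}$, so that no two-sided bound on $\partial_t v^\varepsilon$ is available up to the boundary. The resolution, following \cite{CaffVass}, is that the one-sided control of $\tfrac{d}{dt}(\int_\Omega v^\varepsilon\phi+\alpha\int_\Gamma v^\varepsilon\phi)$ coming from the equation and the dynamic boundary condition, together with the uniform energy bound, is exactly enough both for the compactness and for the ``$T_0$ is terminal'' step that produces the localization in the concentric future hypercube $Q_{R'}$. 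The lower-order source $\alpha\psi_{tt}$ on $\Gamma$ enters only through the constants (just as $\bar C$ and $\bar\sigma$ in Lemma~\ref{lemma4.2.2b}) and is treated as a controlled perturbation; the ``hyperbolic'' scaling, which merely diminishes the coefficient of $\partial_t$ in the heat equation, is harmless here as in \cite{AC2010}.
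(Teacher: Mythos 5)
The paper itself does not prove Lemma \ref{lemma4.2.3b}: it merely states it and remarks that ``This lemma is proved in \cite{CaffVass} and with proper adjustments applies to our situation.'' Your compactness--contradiction reconstruction is therefore precisely the kind of argument the paper invites, and the overall architecture --- uniform energy bound, Aubin--Lions compactness with a one-sided control on the dynamic mass, the structure of the limit $v_\infty$ (values in $\{0\}\cup[\tfrac12,1]$, $H^1_x$ for a.e.\ $t$, hence slice-constant), and a ``once zero, always zero'' argument --- faithfully mirrors the second De Giorgi lemma of \cite{CaffVass}. So the route is right.

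There is, however, a concrete gap in the ``$T_0$ is terminal'' step. Your one-sided bound is
$\frac{d}{dt}\bigl(\int_\Omega v_j\,\phi + \alpha\int_\Gamma v_j\,\phi\bigr)\leq g_j(t)\,\|\phi\|_{H^1}$
with only $\sup_j\|g_j\|_{L^2_t}<\infty$. Over a time interval of length comparable to $R$, the cumulative growth of the tested quantity is then $O(\sqrt{R}\,\|g_j\|_{L^2_t})$, which is not small, so the claim that $F$ cannot re-cross the positive threshold $\tfrac12\bigl(\int_\Omega\phi+\alpha\int_\Gamma\phi\bigr)$ does not follow as written. What is true in the limit is that the contribution to $\int g_j\,dt$ coming from $-\int_\Omega\nabla v_j\cdot\nabla\phi$ does tend to zero (weak $L^2$ convergence of $\nabla v_j$ to $\nabla v_\infty=0$), but the contribution from the dynamic boundary source $\alpha\psi_{tt}$ survives as a term of order $\alpha\|\psi_{tt}\|_\infty\|\phi\|_{L^1(\Gamma)}$, which is $O(1)$ and need not have a helpful sign. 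Saying it ``enters only through the constants'' is not a mechanism here: in this step the perturbation alters the monotone structure of $F$, not merely a multiplicative constant in an estimate. To close the argument you would need either an explicit smallness or sign condition on $\psi_{tt}$ (which must then be tracked through the iteration in Lemma \ref{lemma4.2.4b} and Proposition \ref{Prop4.2.5b}, where the rescaling $\bar\psi_t=\alpha\partial_t\psi_t/M_k$ in fact \emph{enlarges} the effective source), or a replacement of the ``$T_0$ terminal'' argument by a direct De~Giorgi isoperimetric inequality on slices that does not rely on monotonicity of the tested quantity. Your outline also glosses over why the Aubin--Lions step gives strong $L^2$ convergence of the $\Gamma$-traces from a merely one-sided control on $\partial_t$; this is the spot you flag as the ``main obstacle'', and the paper offers no guidance there, so that step needs a careful verification rather than an appeal to \cite{CaffVass}, where the boundary structure is different.
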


We are now ready to obtain our basic decay estimate to zero.

\begin{lemma}\label{lemma4.2.4b}
Let $Q_R(x_0',0,t_0)$ and $\sigma$ be as in Lemma \ref{lemma4.2.1b}. Suppose that $v^\varepsilon$ is a solution to (\ref{4.2.2b}) such that $v^\varepsilon(x_0',0,t_0)=0$ and $0 \leq v^\varepsilon\leq 1$ in $Q_{R}(x_0',0,t_0)$. Then $v^\varepsilon\leq 1-C\sigma$ in $Q_{R'}(x_0',0,t_0)$ where $C$ is independent of $\varepsilon$ and $R'=\frac{\sigma}{16}R$.
\end{lemma}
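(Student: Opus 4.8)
The plan is to mimic the proof of Lemma \ref{lemma4.1.4} from the ``thick'' case, but with the hyperbolic hypercubes $Q_R$, $Q_R'$ of Section \ref{subsection4.2b} in place of the parabolic cylinders, using Lemmata \ref{lemma4.2.1b}, \ref{lemma4.2.2b} and \ref{lemma4.2.3b} as the three ingredients. As before we drop $\varepsilon$ and take $(x_0',0,t_0)=(0,0,0)$. First, since $v(0,0,0)=0$, the contrapositive of Lemma \ref{lemma4.2.1b} gives $\fint_{Q_R'}(1-v)^2dx'dt+\fint_{Q_R}(1-v)^2dxdt\geq\sigma$, and hence a lower measure bound of the form $|\{v<1-\tfrac{\sigma}{4}\}\cap Q_R|\geq c\sigma|Q_R|$ (and the analogous bound on the thin part $Q_R'$), where the constant tracks how the two averages are combined. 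I would then renormalize by setting $w:=\frac{4}{\sigma}\bigl[v-(1-\tfrac{\sigma}{4})\bigr]^+$, which is again a subsolution to (\ref{4.2.2b}) with $0\leq w\leq 1$ and $|\{w=0\}\cap Q_R|\geq c\sigma|Q_R|$, and pass to the dyadic truncations $w_k:=2^k[w-(1-2^{-k})]^+$, each a subsolution to (\ref{4.2.2b}).

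The dichotomy is exactly as in Lemma \ref{lemma4.1.4}. Either for every $k$ one has $|\{0<w_k<1/2\}\cap Q_R|\geq\delta_1|Q_R|$, and then the identity $|\{w_k=0\}|=|\{w_{k-1}=0\}|+|\{0<w_k<1/2\}|$ forces $|\{w_{k_0}=0\}|\geq|Q_R|$ after $k_0>1/\delta_1$ steps, giving $w<1-2^{-k_0}$; or there is a first $k'\leq k_0$ with $|\{0<w_{k'}<1/2\}\cap Q_R|<\delta_1|Q_R|$, and then Lemma \ref{lemma4.2.3b} applied to $w_{k'}$ (with $\sigma_0=c\sigma$, choosing $\epsilon_1$ so that $C\sqrt{\epsilon_1}\leq\bar\sigma$) followed by Lemma \ref{lemma4.2.2b} applied to $w_{k'+1}$ yields $w_{k'+1}\leq 1/2$, hence $w_{k'+1}\leq 7/8$, on the smaller hypercube $Q_{R'}$ with $R'=\frac{\sigma_0}{2}R$, i.e. $w<1-2^{-(k'+c)}$ there. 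In both cases $w<1-2^{-(k_0+c)}$ on $Q_{R'}$, which unwinds to $v<1-2^{-(k_0+c')}\sigma\leq 1-C\sigma$ on $Q_{R'}$ with $R'=\frac{\sigma}{16}R$ after tracking the successive radius shrinkages ($Q_R\to Q_{R/4}$ from Lemma \ref{lemma4.2.1b}, then $Q_{R/4}\to Q_{R/8}$ from Lemmata \ref{lemma4.2.2b}, \ref{lemma4.2.3b}).

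The delicate point — different from the thick case — is the presence of \emph{two} scales in the energy: the bulk integral over $Q_R$ and the thin boundary integral over $Q_R'$, tied together through the Dirichlet-to-Neumann / trace mechanism and the hyperbolic rescaling $(x',x_n,t)\mapsto(Rx',x_n,Rt)$ used in Lemma \ref{lemma4.2.1b}, which shrinks the heat-equation time derivative. Concretely, I must make sure that (i) the measure lower bound extracted from Lemma \ref{lemma4.2.1b} is genuinely a bound on the thin set $Q_R'$ (so that Lemma \ref{lemma4.2.3b}, whose hypothesis and conclusion are stated on $Q_R'$ together with $Q_R$, can be fed the truncations $w_{k'}$), and (ii) the subsolution property and the bounds $0\leq w_k\leq1$ are preserved under both the truncation and the renormalization by $4/\sigma$ — the factor $4/\sigma>1$ is harmless for the subsolution inequality since $\Delta w-\partial_t w\geq 0$ scales correctly, but one should check it does not spoil the boundary term $\alpha\partial_t\psi_t$, which is why the running assumption $(\Delta\tilde\psi^\varepsilon-\partial_t\tilde\psi^\varepsilon)_t=0$ and, in the general reduction, the smallness/sign conditions on the obstacle are used. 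Everything else is the bookkeeping of constants already carried out in Lemmata \ref{lemma4.2.1b}--\ref{lemma4.2.3b} and in \cite{AC2010}, so I expect the proof to be short once the two-scale contrapositive in the first paragraph is set up carefully.
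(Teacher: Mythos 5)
Your proposal is correct and follows essentially the same route as the paper: contrapositive of Lemma \ref{lemma4.2.1b} to get a lower measure bound for $\{v<1-\sigma/4\}$, renormalization to $w=\frac{4}{\sigma}[v-(1-\sigma/4)]^+$, DeGiorgi dyadic truncations $w_k$, and the dichotomy driven by Lemmata \ref{lemma4.2.3b} and \ref{lemma4.2.2b}. One small slip: Lemma \ref{lemma4.2.2b} yields $w_{k'+1}\leq 7/8$ directly (not $\leq 1/2$, which would be the thick-obstacle version from Lemma \ref{lemma4.1.2}); the conclusion is unaffected since you immediately weaken to $7/8$ anyway.
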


\begin{proof}
We drop the $\varepsilon$, take $(x_0',0,t_0)$ to be $(0,0,0)$ (by translation), and set $Q_R=Q_R(0,0,0)$. Since $v(0,0,0)=0$ by Lemma \ref{lemma4.2.1b}
$$\fint_{Q'_R}(1-v)^2dx'dt+\fint_{Q_R}(1-v)^2dxdt\geq \sigma.$$
It follows then that there exists a constant $c_0<1$ such that 
$$|\{v<1-\frac{\sigma}{4}\}\cap Q_R|\geq c_0\sigma|Q_R|.$$
Therefore set $$w:=\frac{4}{\sigma}\bigg(v-(1-\frac{\sigma}{4})\bigg)^+$$
and observe that $w$ is a (nonnegative) subsolution to (\ref{4.2.2b}). By DeGiorgi again, the normalized truncations i.e. 
$$w_k:=2^k\bigg(w-(1-2^{-k})\bigg)^+$$
are still subsolutions to (\ref{4.2.2b}). We will show, now that in a finite number of 
steps $k_0=k_0(\delta_1)$ ($\delta_1$ as in Lemma \ref{lemma4.2.3b}) that $|\{w_{k_0}>0\}|=0$. 
Note that for every $k$, $0\leq w_k\leq 1$ and $|\{w_k=0\}\cap Q_R|\geq \sigma_1|Q_R|$. 
Set $C\sqrt{\epsilon_1}\leq \bar{\sigma}$ where $\epsilon_1$ is defined in Lemma \ref{lemma4.2.3b} and $\bar{\sigma}$ in 
Lemma \ref{lemma4.2.2b}. Hence we assume that for every $k$, $|\{0<w_k<\frac{1}{2}\}\cap Q_R|\geq \delta_1|Q_R|$. Then for every $k$
$$|\{w_k=0\}|=|\{w_{k-1}=0\}|+|\{0<w_{k-1}<1/2\}|\geq |\{w_{k-1}=0\}|+\delta_1|Q_R|$$
Hence after a finite number of steps say $k_0>1/\delta_1$ we get $|\{w_{k_0}=0\}|\geq |Q_R|$. Thus $w_{k_0}<0$ i.e. $2^{k_0}[w-(1-2^{-k_0})]^+=0$ or $w<1-2^{-k_0}$. 
Suppose, now, that there exists $k'$, $0\leq k'\leq k_0$ such that $$|\{0<w_{k'}<\frac{1}{2}\}|<\delta_1.$$
By Lemma \ref{lemma4.2.3b} applied to $w_{k'}$ and consequently by Lemma \ref{lemma4.2.2b} applied to $w_{k'+1}$ we conclude that $w_{k'+1}\leq 7/8$ in $Q_{R'}$, where $R'=\frac{\sigma}{16}R$ 
i.e. $w<1-\frac{1}{8}2^{-(k'+1)}$. A fortiori, in both cases we have $w<1-2^{-(k_0+4)}$ in $Q_{R'}$ that is $v<1-2^{-k_0-6}\sigma$.
\end{proof}

The estimates we obtained above are all independent of $\varepsilon$ and remain invariant under hyperbolic scaling much the same way as in \cite{AC2010}. 
Although the time derivative term diminishes in the rescaling, we still obtain the continuity of the time derivative.

\begin{prop}\label{Prop4.2.5b}
Let $v^\varepsilon$ be a solution to (\ref{4.2.2b}) in $Q_R$. Suppose that $0\leq v^\varepsilon\leq M$ where $M$ is independent of $\varepsilon$. 
If $v^\varepsilon(0,0,0)=0$ then 
$$v^\varepsilon(x',x_n,t)\leq \omega(|x'|,|x_n|,|t|)$$
where $\omega$ is a modulus of continuity (i.e. monotone and $\omega(0)=0$) independent of $\varepsilon$.
\end{prop}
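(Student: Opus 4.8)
The plan is to iterate the decay estimate of Lemma \ref{lemma4.2.4b} along a geometrically decreasing sequence of hyperbolic hypercubes, exactly as was done for the square cylinders in Proposition \ref{Prop4.1.5}, and then close up the continuity ``from the future'' with a barrier. Since the statement is local and only concerns the point $(0,0,0)$ where $v^\varepsilon$ vanishes, I may assume (after the standard reduction, using that away from the free boundary $v^\varepsilon$ solves a smooth equation) that $v^\varepsilon(0,0,0)=0$ and $0\le v^\varepsilon\le M$ in $Q_R$. Drop $\varepsilon$ and normalize so that $M=1$ and $R=1$. I set $M_k:=\sup_{Q_{R_k}}v$, and I want to choose the scales $R_k$ and show $M_{k+1}\le\mu_k M_k$ with $\mu_k=1-C\sigma_k$ for an appropriate $\sigma_k$.

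First I would rescale hyperbolically: given that $\sup_{Q_{R_k}}v=M_k$, put $\bar v(x',x_n,t):=v(R_k x',R_k x_n,R_k t)/M_k$ on $Q_1$, so that $0\le\bar v\le 1$, $\bar v(0,0,0)=0$, and $\bar v$ still solves (\ref{4.2.2b}) in the rescaled domain — here one uses that the hyperbolic scaling $(x,t)\mapsto(R x,R t)$ only \emph{diminishes} the coefficient of $\partial_t\bar v$ in the interior heat equation (it becomes $R_k\partial_t\bar v-\Delta\bar v=0$), which is harmless for the maximum/energy arguments, and leaves the boundary condition on $\Gamma$ of exactly the same form, cf. \cite{AC2010}. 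Then Lemma \ref{lemma4.2.4b} applies to $\bar v$ and yields $\sup_{Q_{R'}}\bar v\le 1-C\sigma$ with $R'=\tfrac{\sigma}{16}$; undoing the scaling gives
\begin{equation*}
\sup_{Q_{R_{k+1}}}v\le(1-C\sigma_k)\,\sup_{Q_{R_k}}v,\qquad R_{k+1}:=\tfrac{\sigma_k}{16}R_k,
\end{equation*}
where $\sigma_k$ is the threshold constant of Lemma \ref{lemma4.2.1b} at that scale, and by the explicit form of $\sigma$ in Lemma \ref{lemma4.2.1b} one has $\sigma_k\sim M_k^{\,p}$ for an exponent $p=p(n)$ coming from the Sobolev exponent in the iteration. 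Hence $M_{k+1}\le(1-C M_k^{\,p})M_k$, and although $\mu_k\to1$ as $k\to\infty$, the product telescopes to force $M_k\to0$: this is precisely the elementary ODE-comparison lemma used in Propositions \ref{Prop4.1.5} and \ref{Prop4.8}, giving a (logarithmic, or in fact power) modulus. Tracking $R_k$ alongside $M_k$ (again as in the remark ``$M_k\sim2^{-k}$ and $R_k\sim2^{-k}$'') lets me convert the discrete decay $M_k\le\omega(R_k)$ into a genuine modulus of continuity $\omega(|x'|,|x_n|,|t|)$ at $(0,0,0)$, uniformly in $\varepsilon$.

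Finally, the above only controls $v$ on hypercubes sitting \emph{backward} in time from $(0,0,0)$; to get a two-sided modulus one runs the standard barrier argument to obtain continuity ``from the future,'' comparing $v$ with a caloric barrier built on the thin boundary piece $Q_R'$ much as in (\ref{barwb}), which is insensitive to the diminished time-coefficient. The main obstacle I anticipate is bookkeeping the hyperbolic rescaling carefully: one must check that the boundary term $\beta'_\varepsilon(u^\varepsilon-\tilde\psi^\varepsilon)v^\varepsilon+\alpha\partial_t\tilde\psi_t^\varepsilon$ (or, in the reduced setting, that $\beta'\ge0$ makes the boundary integral favorable and the extra term is negligible after rescaling) behaves correctly under $(x,t)\mapsto(R_kx,R_kt)$, and that the constants $\sigma_k,\bar\sigma_k,\delta_1$ from Lemmata \ref{lemma4.2.1b}--\ref{lemma4.2.3b} can be chained without the exponent of $M_k$ in $\mu_k$ being so large as to stop the product from vanishing — this is exactly where the $n\ge14(n-1)+1$ type conditions and the explicit choices of $M,\delta$ in \cite{AC2010} are needed, and the verification that $M_k\to0$ despite $\mu_k\to1$ is the delicate quantitative point.
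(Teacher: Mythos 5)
Your proposal follows the paper's own argument essentially verbatim: hyperbolic rescaling $\bar v := v(r_k x', r_k x_n, r_k t)/M_k$ (which turns the interior equation into $\Delta\bar v - r_k\partial_t\bar v = 0$ and keeps the thin boundary condition of the same form), application of Lemma~\ref{lemma4.2.4b} to get $M_{k+1}\leq(1-C M_k^{\,n-1})M_k$, the observation that this recursion forces $M_k\to 0$ despite $\mu_k\to 1$, and a barrier for continuity from the future. The only inaccuracy is your attribution of the exponent in $\sigma_k\sim M_k^{\,p}$ solely to the Sobolev exponent; it enters chiefly through the rescaled boundary source $\bar\psi_t = \alpha\partial_t\tilde\psi_t^\varepsilon/M_k$, which inflates the constant $C$ in the energy inequality and hence shrinks $\sigma$ via the relation $\sigma\sim C^{-(n-1)}$, but this does not change the structure of the argument.
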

\begin{proof}
We drop as usual the $\varepsilon$. Set 
$$Q_k:=Q_{R_k}=(-R_k,R_k)^{n-1}\times (0,r_k)\times (-R_k,0] \ \ {\rm{and}}\ \ M_k:=\sup_{Q_k}v$$
where $R_k:=r_kR$, $r_k:=\frac{\sigma}{16}M_k$.
Define 
$$\bar{v}:=\frac{v_k}{M_k}, \ \ {\rm{where}}\ \ v_k(x,t):=v(r_kx', r_kx_n,r_kt).$$
Then $\bar{v}$ verifies 
\begin{equation}\label{page17Prob}
\begin{cases}  
\Delta \bar{v}-r_k\partial_t\bar{v}=0 &{\rm{in}} \ \  Q_R \cr
-\alpha\partial_t\bar{v}-\partial_\nu \bar{v}=\beta'(u-\psi)\bar{v}+\bar{\psi}_t &{\rm{on}} \ \  Q_R' \cr
\end{cases}
\end{equation}
where $\bar{\psi}_t=\alpha\partial_t\psi_t/M_k$. We apply Lemma \ref{lemma4.2.4b} to $\bar{v}$ to obtain
$$\sup_{Q_{R'}}\bar{v}\leq 1-C\sigma.$$

Hence in our original setting
$$\sup_{Q_{k+1}}v\leq \mu_k\sup_{Q_k}v$$
where $\mu_k=1-C(\sup_{Q_k}v)^{n-1}$. Therefore $\mu_k\rightarrow 1$ as $k\rightarrow \infty$ only if $\sup_{Q_k}u\rightarrow 0$ which yields a modulus of continuity. Finally, a standard barrier argument yields the continuity from the future, too.
\end{proof}

\begin{thm}\label{theorem4.2.6b}
Let $u$ be a solution to (\ref{4.2.1b}) then $(u-\psi)_t^+$ is continuous with a uniform modulus of continuity. 
\end{thm}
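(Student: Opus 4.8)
The plan is to deduce the theorem from Proposition~\ref{Prop4.2.5b} in exactly the way Theorems~\ref{theorem4.1.6} and \ref{theorem4.9} are deduced from their corresponding propositions: by a compactness (Arzel\`a--Ascoli) argument in the penalization parameter $\varepsilon$. First I would recall that the penalized problems (\ref{4-2penmodel2b}) have smooth solutions $u^\varepsilon$ and that, by the standard theory of the dynamic Signorini problem, a subsequence of $\{u^\varepsilon\}$ converges locally uniformly, as $\varepsilon\to 0$, to the unique solution $u$ of (\ref{4.2.1b}). Correspondingly $v^\varepsilon=(u^\varepsilon-\tilde\psi^\varepsilon)_t$ solves (\ref{4.2.2b}) under the running assumption $(\Delta\tilde\psi^\varepsilon-\partial_t\tilde\psi^\varepsilon)_t=0$; the general inhomogeneous case is recovered by carrying the term $f_t$ through the energy inequalities of Lemmas~\ref{lemma4.2.1b}--\ref{lemma4.2.4b}, just as $f_t$ was carried through \S\ref{subsection4.1}.

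Next I would reduce to the positive part $(v^\varepsilon)^+$ and record the uniform bound $0\le (v^\varepsilon)^+\le M$ with $M$ independent of $\varepsilon$: the lower bound $v^\varepsilon\ge -C$ comes from the time quasi-convexity of $u^\varepsilon$ (Theorem~\ref{semiconvexity}), and the two-sided bound on $u^\varepsilon_t$ follows from the maximum principle applied to the equation obtained by differentiating (\ref{4-2penmodel2b}) in $t$, using $\beta'_\varepsilon\ge 0$ and the smoothness of the data. These are precisely the hypotheses of Proposition~\ref{Prop4.2.5b}. At every point $(x_0',0,t_0)\in\Gamma\times(0,T]$ where $(v^\varepsilon)^+$ vanishes, Proposition~\ref{Prop4.2.5b}, applied after translating that point to the origin and using the hyperbolic invariance of (\ref{4.2.2b}), produces a modulus of continuity $\omega$ for $(v^\varepsilon)^+$ at that point which is independent of $\varepsilon$ and of the point. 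Where $(v^\varepsilon)^+>0$, the function $v^\varepsilon$ satisfies near that point the heat equation with the linear oblique condition $-\alpha\partial_t v^\varepsilon-\partial_\nu v^\varepsilon=\beta'_\varepsilon(u^\varepsilon-\tilde\psi^\varepsilon)v^\varepsilon+\alpha\partial_t\tilde\psi^\varepsilon_t$, whose right-hand side is bounded, so interior and boundary parabolic estimates furnish a modulus there as well, and the two are patched (by the usual dichotomy: a pair of points is either both far from the contact set, where the linear estimate applies, or one of them is close to a zero point, where $\omega$ applies twice) into a single modulus for $(v^\varepsilon)^+$ on compact subsets, uniform in $\varepsilon$. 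By Arzel\`a--Ascoli a further subsequence of $\{(v^\varepsilon)^+\}$ converges locally uniformly; since $u^\varepsilon\to u$ and $\tilde\psi^\varepsilon\to\tilde\psi$ with $\tilde\psi=\psi$ on $\Gamma$, the limit is $(u-\psi)_t^+$, and being a locally uniform limit of functions all sharing the modulus $\omega$, it inherits $\omega$; this proves the theorem.

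The main obstacle is not in this wrap-up but in making the patching honest: one must combine the free-boundary decay of Proposition~\ref{Prop4.2.5b} at the contact points $\{v^\varepsilon=0\}$ with classical parabolic estimates on the complement and across $\Gamma$, and one must check that the hyperbolic rescaling used in Proposition~\ref{Prop4.2.5b}, which in the limit kills the $\partial_t$ term of the heat equation, does not degrade the modulus $\omega$ — this is exactly the delicate point already resolved in \cite{AC2010}, and I would invoke that mechanism rather than redo it. Identifying the limiting object as genuinely $(u-\psi)_t^+$, rather than merely $\limsup_\varepsilon (v^\varepsilon)^+$, is then routine from the distributional convergence $v^\varepsilon\to(u-\tilde\psi)_t$ together with the uniform convergence just obtained.
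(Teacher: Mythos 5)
Your proposal is correct and follows exactly the route the paper takes (implicitly—no proof is printed for Theorem~\ref{theorem4.2.6b}, but the pattern is the same one-line argument used for Theorems~\ref{theorem4.1.6} and~\ref{theorem4.9}): uniform modulus of continuity for $(v^\varepsilon)^+$ from Proposition~\ref{Prop4.2.5b}, locally uniform convergence of the penalized family, and compactness. You have merely expanded the compactness/patching step, which the authors leave to the reader, and correctly flagged the only delicate points (the uniform bound $M$ from quasi-convexity and the maximum principle, and the non-degeneration of the modulus under the hyperbolic rescaling, which is handled in \cite{AC2010}).
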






\section{Further implications on the (nondynamic) thin obstacle problem or (time dependent) Signorini Problem}\label{section5}

In the present section we shall concentrate on the nondynamic parabolic "thin" obstacle or parabolic Signorini problem and we will show how the quasi-convexity yields the optimal regularity of the solution as well as free boundary regularity. The other cases, as it was mentioned in \S\ref{Intro}, will be treated in forthcoming papers. Since it is easier to work with the zero obstacle, we extend the obstacle as it was done in \S\ref{subsection4.2} in all of $\Omega$ and subtract it from the solution which we still denote by $u$. More precisely:

Given $\Omega \subset \R^n$ be an open bounded set with smooth
boundary $\partial \Omega$ and $\Gamma \subset \partial \Omega$ lying in $\R^{n-1}$.
We consider the following problem:
\begin{equation}\label{model2}
\begin{cases}  
\Delta u-\partial_t u=f,   &{\rm{in}} \ \  \Omega \times (-T,T] \cr
\partial_\nu u\geq 0, \ \ u\geq 0  &{\rm{on}} \ \  \Gamma \times (-T,T] \cr
u\partial_\nu u=0  &{\rm{on}} \ \  \Gamma \times (-T,T] \cr
u=\phi-\psi &{\rm{on}} \ \  \partial_p(\Omega\setminus\Gamma \times (-T,T]) \cr
\end{cases}
\end{equation}
where $\nu$ is the unit outward normal, the functions $\psi (x',t)$ and $\phi (x,t)$ are smooth functions, satisfying
the compatibility conditions of \S\ref{section3}, and $f:=-(\Delta \tilde{\psi}-\partial_t \tilde{\psi})$. Notice that the extended $\tilde{\psi}$ can be chosen, with no loss of generality, in such a way so that $f$ is independent of $x_n$.

The methods to follow can be easily extended to cover a more general nonhomogeneous term $f$. But, in order to avoid minor technicalities and set forth the ideas involved behind it, we work with (\ref{model2}).

\subsection{Optimal regularity of the space derivative}\label{optimal space}






The solution to the problem (\ref{model2}) is globally Lipschitz continuous in space and furthermore the space normal to the hyperplane derivative 
enjoys a $C^{\alpha}$ for $0<\alpha\leq{\frac{1}{2}}$ parabolic regularity up to the hyperplane (see \cite{Athparabolic} and \cite{ArU}). We will prove in this subsection that, actually, $\alpha=\frac{1}{2}$. Recently, in \cite{DGPT}, the optimal space derivative regularity was also obtained using the parabolic Almqren's frequency formula approach.

First, we want to complete what had started in \cite{AC} i.e. to prove a parabolic monotonicity formula analogous to the elliptic one for the global zero obstacle case. We thus take in (\ref{model2}) $f=0$ and the domain $\Omega$ to be the half space $\R^n_+$. In this situation, it is clear, perhaps by appropriately blowing up the local solution, that the solution $u$ is convex in the tangential and time directions. For simplicity we take the origin to be a free boundary point. The proof of the monotonicity result relies on the following eigenvalue problem (see the appendix of \cite{AC}): 

\begin{lemma}\label{Lemma3}
Set $$\lambda_0=\inf_{\substack{w\in H^1(\R^n_+) \\ w=0\ \text{on}\ \R^{n-1}_-}}\frac{\int_{\R^n_{+}}|\grad w(y,-1)|^2e^{-\frac{-|y|^4}{4}}dy}{\int_{\R^n_{+}}w^2(y,-1)e^{-\frac{-|y|^4}{4}}dy},$$
where $$\R^n_+:=\{x=(x',x_n)\in \R^n : x_n>0\}$$
and 
$$\R^{n-1}_-:=\{(x',0):x'\in \R^{n-1},\ x_{n-1}<0\}.$$
Then $\lambda_0=1/4$.
\end{lemma}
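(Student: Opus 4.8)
The plan is to exhibit the minimizer explicitly and then verify that no admissible competitor does better; I will work directly on $\R^n_+$, no dimension reduction being needed. The angular separation that produces the value $\tfrac14$ is elementary: on $(0,\pi)$ the problem $-h''=\mu h$ with $h'(0)=0$ and $h(\pi)=0$ has least eigenvalue $\mu=\tfrac14$, with eigenfunction $h(\theta)=\cos(\theta/2)$. Accordingly, writing $(y_{n-1},y_n)=(r\cos\theta,r\sin\theta)$, $\theta\in[0,\pi]$, and keeping $w$ independent of $y_1,\dots,y_{n-2}$, I would take
\[ w_0(y):=r^{1/2}\cos(\theta/2), \]
which is the real part of the principal square root of $y_{n-1}+iy_n$, whose branch cut is precisely $\R^{n-1}_-$. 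Thus $w_0$ is harmonic in $\R^n_+$, vanishes exactly on $\R^{n-1}_-$, is positive on $\overline{\R^n_+}\setminus\R^{n-1}_-$, satisfies $\partial_\nu w_0=0$ on $\{y_n=0\}\setminus\R^{n-1}_-$ (the natural condition attached to the constraint-free part of the boundary), and has $|\grad w_0|\sim r^{-1/2}$ near the edge $\{y_{n-1}=y_n=0\}$, which is square integrable there against the Gaussian weight $e^{-|y|^2/4}\,dy$ (the backward heat kernel at time $-1$); hence $w_0$ is admissible. First I would check that it realizes $\tfrac14$: being homogeneous of degree $\tfrac12$, Euler's relation gives $y\cdot\grad w_0=\tfrac12 w_0$, so $w_0$ solves $\Delta w_0-\tfrac12\,y\cdot\grad w_0+\tfrac14 w_0=0$ in $\R^n_+$; multiplying by $w_0\,e^{-|y|^2/4}$ and integrating by parts — the boundary terms vanish since $w_0\,\partial_\nu w_0=0$ on $\{y_n=0\}$ and by Gaussian decay at infinity — yields $\int_{\R^n_+}|\grad w_0|^2 e^{-|y|^2/4}=\tfrac14\int_{\R^n_+}w_0^2 e^{-|y|^2/4}$, so $\lambda_0\le\tfrac14$.

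For $\lambda_0\ge\tfrac14$ I would use the ground-state substitution. Since $w_0>0$ a.e.\ in $\R^n_+$ and vanishes only on the Dirichlet set, any admissible $w$ may be written $w=w_0\,v$; expanding $|\grad(w_0v)|^2=|\grad w_0|^2v^2+w_0^2|\grad v|^2+w_0\,\grad w_0\cdot\grad(v^2)$, integrating the cross term by parts and using the equation for $w_0$ to replace $\dv\!\big(w_0\,\grad w_0\,e^{-|y|^2/4}\big)$ by $\big(|\grad w_0|^2-\tfrac14 w_0^2\big)e^{-|y|^2/4}$, the singular $|\grad w_0|^2v^2$ terms cancel exactly and one is left with
\[ \int_{\R^n_+}|\grad w|^2 e^{-|y|^2/4}=\int_{\R^n_+}w_0^2|\grad v|^2 e^{-|y|^2/4}+\tfrac14\int_{\R^n_+}w^2 e^{-|y|^2/4}\ \ge\ \tfrac14\int_{\R^n_+}w^2 e^{-|y|^2/4}. \]
Hence $\lambda_0=\tfrac14$, with $w_0$ a minimizer. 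Equivalently one can argue spectrally: the weighted operator $-\Delta+\tfrac12\,y\cdot\grad$ with Dirichlet data on $\R^{n-1}_-$ and the natural condition on the rest of $\{y_n=0\}$ is self-adjoint and nonnegative on $L^2(\R^n_+,e^{-|y|^2/4}dy)$, the Gaussian weight makes the form domain compactly embedded, so $\lambda_0$ is attained by an eigenfunction which may be taken nonnegative; since $w_0$ is a nonnegative eigenfunction with eigenvalue $\tfrac14$ and eigenfunctions for distinct eigenvalues are $L^2(e^{-|y|^2/4}dy)$-orthogonal — impossible for two a.e.\ positive functions — the bottom eigenvalue equals $\tfrac14$.

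The only genuine difficulty is technical: legitimizing the two integrations by parts despite the blow-up $|\grad w_0|\sim r^{-1/2}$ at the edge $\{y_{n-1}=y_n=0\}$ and the unboundedness of $\R^n_+$. I would first carry out the computation for $w=w_0\,v$ with $v$ smooth of bounded support — a family dense in the admissible class — and then insert a two-sided cutoff excising an $\eta$-tube around the edge and truncating outside a ball $B_R$: the boundary integral over the $\eta$-tube is $O(\eta)$ (its surface measure is $\sim\eta$ while $w_0\,\partial_\nu w_0\sim r^{1/2}r^{-1/2}=O(1)$), the contribution from $\partial B_R$ is dominated by the Gaussian tail, and both tend to $0$ as $\eta\to0$, $R\to\infty$; that the rearranged integrand carries no remaining non-integrable singularity is exactly the cancellation noted above. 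As a by-product this also identifies $w_0$ as the minimizer up to a multiplicative constant.
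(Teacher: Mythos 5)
The paper does not prove this lemma in the text: it refers the reader to the appendix of \cite{AC}, so there is no in-paper argument to compare against. Taking your proposal on its own terms, it is correct in substance and is the standard argument. The candidate $w_0=r^{1/2}\cos(\theta/2)$ in the $(y_{n-1},y_n)$-plane is precisely the $\tfrac12$-homogeneous harmonic profile that vanishes on $\R^{n-1}_-$ and satisfies the natural (Neumann) condition on the rest of $\{y_n=0\}$; combining harmonicity with Euler's identity $y\cdot\grad w_0=\tfrac12 w_0$ gives $(-\Delta+\tfrac12\,y\cdot\grad)w_0=\tfrac14 w_0$, so $w_0$ is an eigenfunction of the Ornstein--Uhlenbeck operator associated to the Gaussian weight $e^{-|y|^2/4}$ with eigenvalue $\tfrac14$, and the upper bound $\lambda_0\le\tfrac14$ follows. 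The ground-state substitution $w=w_0v$ then gives the matching lower bound, with the cancellation of the $|\grad w_0|^2v^2$ terms being exactly the point of the trick.

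Two small refinements. First, the density step is cleaner stated the other way around: since the edge $\{y_{n-1}=y_n=0\}$ has codimension $2$ in $\R^n$ and hence zero $H^1$-capacity, smooth functions vanishing in a (relative) neighborhood of $\overline{\R^{n-1}_-}$ in $\{y_n=0\}$ are dense in the admissible class in the $H^1(e^{-|y|^2/4}dy)$-norm; for such $w$ the ratio $v=w/w_0$ is automatically smooth and bounded (it vanishes near the edge where $w_0$ does), so the two integrations by parts require no cutoff at the edge at all, only the routine truncation at infinity controlled by the Gaussian. This is in fact exactly what your $\eta$-tube estimate is implicitly delivering, just phrased more economically. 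Second, your alternative spectral argument (compact embedding $H^1(\gamma)\hookrightarrow L^2(\gamma)$, positivity of the ground state, and $L^2(\gamma)$-orthogonality of eigenfunctions with distinct eigenvalues being incompatible with two a.e.\ positive functions) is a valid shortcut to the same conclusion and is worth keeping as a sanity check, but the ground-state substitution already closes the argument without it.
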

Let $w$ be any  function in $\overline{\R^n_+}\times [-1,0]$ that is caloric in $\R^n_+\times [-1,0]$, where $\R^n_+=\{x=(x',x_n)\in \R^n : x_n>0\}$. We assume that $w$ has moderate growth at infinity, 
$$\int_{B_R}w^2(x,-1)dx\leq Ce^{\frac{|x|^2}{4+\varepsilon}}$$
for some positive constant $C$, $R$ large and some $\varepsilon>0$. We also set 
\begin{equation}\label{fundamental solution}G(x,t)= \begin{cases}\frac{1}{(4\pi t)^{n/2}}e^{-\frac{|x|^2}{4t}}, &  t> 0\cr
 0 &  t\leq 0.
\end{cases}\end{equation}

\begin{lemma}\label{Lemma4}

Set $w(x,t)=u_{x_n}(x,t)$ where $u$ is a solution, with the above restrictions, to problem (\ref{model2}) and assume that $w(0,0)=0$. If $$\varphi(t):=\frac{1}{t^{1/2}}\int_{-t}^0\int_{\R^n_+}|\grad w|^2G(x,-s)dxds,$$ then $\varphi(t)$ is increasing in $t$.
\end{lemma}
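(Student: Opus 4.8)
The plan is to prove monotonicity of the Almgren-type frequency functional $\varphi(t)$ by the standard Weiss/Almgren differentiation technique, using the eigenvalue bound $\lambda_0 = 1/4$ from Lemma \ref{Lemma3} as the crucial ingredient. First I would rewrite $\varphi$ in self-similar (parabolic) variables: substituting $s = -\tau$ and then rescaling $x \mapsto t^{1/2} y$, $\tau \mapsto t \sigma$, the functional becomes
$$
\varphi(t) = \int_{-1}^{0}\int_{\R^n_+} |\grad w(t^{1/2}y, t\sigma)|^2 \, G(y,-\sigma)\, dy\, d\sigma ,
$$
so that differentiating in $t$ only hits the function $w$ through the parabolic scaling generator $Zw := \tfrac{1}{2}x\cdot\grad w + t\, w_t$. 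The key algebraic identity is that $\grad w$, being caloric componentwise (since $w = u_{x_n}$ is caloric in $\R^n_+$), satisfies a Rellich/Pohozaev-type identity in Gaussian-weighted space relating $\int |Zw|^2 G$, $\int |\grad w|^2 G$, and a boundary term on $\{x_n = 0\}$.

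Next I would carry out the differentiation. Writing $D(t) := t^{-1/2}\int_{-t}^0\int_{\R^n_+}|\grad w|^2 G(x,-s)\,dx\,ds$ (so $\varphi = D$), one computes $t\, \varphi'(t)$ and, after integrating by parts using the caloric equation for each $\partial_i w$ and the Gaussian weight's identity $\grad G = -\tfrac{x}{2(-s)}G$, one obtains an expression of the form
$$
t\,\varphi'(t) = \frac{2}{t^{1/2}}\left( \int\!\!\int |Z(\grad w)|^2 G \cdot \int\!\!\int |\grad w|^2 G - \Big(\int\!\!\int \grad w \cdot Z(\grad w)\, G\Big)^2 \right)\Big/ \Big(\int\!\!\int |\grad w|^2 G\Big)
$$
up to a boundary contribution on $\Gamma \times$ time. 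The bulk term is nonnegative by Cauchy--Schwarz. The boundary term is where the obstacle condition and Lemma \ref{Lemma3} enter: on $\R^{n-1}$ the Signorini conditions $u_{x_n}\geq 0$, $u\, u_{x_n} = 0$ together with tangential/time convexity of $u$ force $w = u_{x_n}$ to vanish on (essentially) $\R^{n-1}_-$ and to have the right sign structure, so that the boundary integral has a definite sign; moreover $\lambda_0 = 1/4$ is exactly the threshold that makes the homogeneity-$\tfrac12$ competitor a critical point, guaranteeing the boundary term does not destroy monotonicity. Concretely, the spectral gap $\lambda_0 = 1/4$ yields the Poincaré-type inequality on $\R^n_+$ with the quartic Gaussian weight that bounds the boundary term by the interior Dirichlet term with the correct constant.

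I would then assemble these pieces: $t\varphi'(t) \geq 0$ follows from (i) Cauchy--Schwarz on the interior terms and (ii) the sign of the boundary term secured by the Signorini structure and the sharp constant $\lambda_0=1/4$. Finally one checks the integrability/growth hypotheses (the "moderate growth" assumption on $w$ guarantees all the Gaussian-weighted integrals converge and the integration by parts has no contribution from infinity), and that $\varphi$ is absolutely continuous in $t$ so that $\varphi' \geq 0$ a.e. implies $\varphi$ increasing.

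The main obstacle I expect is the rigorous handling of the boundary term on $\Gamma$: one must translate the unilateral Signorini conditions plus the tangential and time convexity of $u$ into a clean statement about $w=u_{x_n}$ on the hyperplane (its sign, its vanishing set, and the sign of $w\, \partial_t w$ and $w\, \partial_{x_i} w$ there) and then show this makes the boundary contribution to $t\varphi'(t)$ nonnegative, invoking Lemma \ref{Lemma3} at precisely the right place. The interior Cauchy--Schwarz part is routine once the self-similar change of variables is set up correctly; the subtlety is entirely in the boundary analysis and in verifying that no boundary-at-infinity terms appear under the stated growth condition.
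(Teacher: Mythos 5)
Your high-level list of ingredients is right --- the eigenvalue Lemma~\ref{Lemma3}, the moderate-growth hypothesis to kill contributions from infinity, and the Signorini complementary conditions to control the boundary integral --- but the mechanism you propose does not match the structure of $\varphi$, and the role you assign to $\lambda_0=1/4$ is misplaced.

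The first problem is structural. You call $\varphi$ an ``Almgren-type frequency functional'' and write $t\varphi'(t)$ in the Cauchy--Schwarz form
$\big(\iint|Z(\grad w)|^2G\cdot\iint|\grad w|^2G-(\iint\grad w\cdot Z(\grad w)G)^2\big)/\iint|\grad w|^2G$.
That quotient structure is what appears when one differentiates a genuine frequency \emph{ratio} (Dirichlet energy over $L^2$ mass). But $\varphi(t)=t^{-1/2}\int_{-t}^0\int_{\R^n_+}|\grad w|^2G$ is a rescaled Gaussian Dirichlet energy with no denominator; its logarithmic derivative is $-\frac{1}{2t}+I'(t)/I(t)$, and $\varphi'$ cannot collapse into an $\frac{AB-C^2}{B}$ expression. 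The paper instead differentiates directly, writes $|\grad w|^2=\tfrac12\Delta w^2-w w_t$ using that $w=u_{x_n}$ is caloric, and integrates by parts against $G(x,-s)$, using $\Delta G+G_t=\delta_{(0,0)}$, $w(0,0)=0$ and $G(x,0)=0$. The result is
$\varphi'(t)=-\tfrac{1}{4t^{3/2}}\int w^2(x,-t)G(x,t)\,dx+\tfrac{1}{t^{1/2}}\int|\grad w(x,-t)|^2G(x,t)\,dx
-\tfrac{1}{4t^{3/2}}\int_{-t}^0\int_{\R^{n-1}}2ww_\nu G\,dx'\,ds$,
which is a single-time-slice Poincar\'e gap plus a boundary integral --- not a Cauchy--Schwarz determinant.

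The second problem is the role of $\lambda_0$. You say the spectral gap ``bounds the boundary term by the interior Dirichlet term.'' In fact the eigenvalue plays no part in controlling the boundary term. Lemma~\ref{Lemma3} (after rescaling $x=\sqrt{t}\,y$) is exactly the Gaussian Poincar\'e inequality that makes the combination
$\tfrac{1}{t^{1/2}}\int|\grad w(\cdot,-t)|^2G(\cdot,t)-\tfrac{1}{4t^{3/2}}\int w^2(\cdot,-t)G(\cdot,t)$ nonnegative, using that $w(\cdot,-t)$ vanishes on a half of the hyperplane. The boundary term $-\tfrac{1}{4t^{3/2}}\iint 2ww_\nu G$ is handled entirely by the complementary conditions: on $\R^{n-1}$ either $u>0$ and $w=u_{x_n}=0$, or $u=0$ and the sign of $u_{x_n}u_{x_nx_n}$ is the right one, so $ww_\nu$ has a definite sign. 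Collapsing these two separate steps into one ``Poincar\'e bounds the boundary by the interior'' statement would not give you a workable proof of $\varphi'\ge0$. So although the ingredients are identified, the differentiation scheme and the way the pieces fit together need to be redone along the lines just described.
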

\begin{proof}
Note that $\Delta w^2=2w\Delta w+2|\grad w|^2$. We compute $\varphi'(t)$, with a usual mollification argument, to obtain
\begin{eqnarray}
\varphi'(t) &=& -\frac{1}{2t^{3/2}}\int_{-t}^0\int_{\R^n_+}|\grad w|^2G(x,-s)dxds+\frac{1}{t^{1/2}}\int_{\R^n_+}|\grad w(x,-t)|^2G(x,t)dx\nonumber\\
&=& -\frac{1}{2t^{3/2}}\int_{-t}^0\int_{\R^n_+}(\frac{1}{2}\Delta w^2-ww_t)G(x,-s)dxds+\frac{1}{t^{1/2}}\int_{\R^n_+}|\grad w(x,-t)|^2G(x,t)dx.\nonumber
\end{eqnarray}
By integrating by parts and noticing that $\Delta G+G_t=\delta_{(0,0)}$, $w(0,0)=0$ and $G(x,0)=0$, we obtain, 


$$\varphi'(t)=-\frac{1}{4t^{3/2}}\int_{\R^n_+} w^2(x,-t) G(x,t)dx+\frac{1}{t^{1/2}}\int_{\R^n_+}|\grad w(x,-t)|^2G(x,t)dx$$
\begin{equation}
-\frac{1}{4t^{3/2}}\int_{-t}^0\int_{\R^{n-1}}2ww_\nu G(x',0,-s)dx'ds.
\end{equation}
Hence, by the eigenvalue problem of Lemma \ref{Lemma3} and the complimentary conditions of the solution on $R^{n-1}$, $\varphi'(t)\geq0$.
\end{proof}

\begin{thm}\label{Theorem4}
If $u$ is a solution to the global convex case of (\ref{model2}) then $\grad u\in C^{1/2,1/4}_{x,t}$ up to the coincidence set.
\end{thm}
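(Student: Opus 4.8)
The plan is to extract the sharp exponent $\alpha=\tfrac12$ from the monotonicity formula of Lemma~\ref{Lemma4}, following the classical scheme ``monotonicity $\Rightarrow$ Morrey growth of the energy $\Rightarrow$ parabolic Campanato/Hölder estimate'', adapted to the half-space geometry. First I would reduce the statement to a uniform growth estimate at free boundary points. Away from the coincidence set $u$ is caloric in $\R^n_+$ and on $\{x_n=0\}$ satisfies either $u=0$ (on the interior of the coincidence set) or $\partial_\nu u=0$ (on the interior of the non-coincidence set); both are smooth conditions, so by parabolic Schauder estimates $\grad u$ is there as regular as needed. At a free boundary point $z_0$, in the global convex case, $u(z_0)=0$, and since $u|_{\{x_n=0\}}$ is convex in $(x',t)$ with a minimum at $z_0$ while $u_{x_n}(z_0)=0$ (because $u_{x_n}$ vanishes on the non-coincidence part of $\{x_n=0\}$), in fact $\grad u(z_0)=0$. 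Thus it suffices to prove $|\grad u(z)|\le C\,d_p(z,z_0)^{1/2}$ for $z$ near $z_0$, with $C$ independent of $z_0$, where $d_p$ is the parabolic distance.

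Set $w=u_{x_n}$ and normalize $z_0=(0,0)$, so $w(0,0)=0$ and Lemma~\ref{Lemma4} applies: monotonicity gives $\varphi(t)\le\varphi(1)$ for $0<t\le1$, i.e.
\[
\int_{-t}^{0}\!\int_{\R^n_+}|\grad w|^2\,G(x,-s)\,dx\,ds\ \le\ \varphi(1)\,t^{1/2},\qquad 0<t\le1 .
\]
Using $G(x,-s)\ge c\,s^{-n/2}$ on $\{|x|^2\le s\}$ and summing over dyadic parabolic shells yields the Morrey bound $\int_{Q_r^+}|\grad w|^2\le C\,r^{n+1}$ on parabolic half-cylinders centred at $(0,0)$; combining this with a parabolic Poincaré inequality (legitimate since $w$ is caloric) and a dyadic telescoping that uses $w(0,0)=0$ gives $\fint_{Q_r^+}|w|^2\le C\,r$. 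The same inequality holds with a uniform constant at every free boundary point, and at every other point it follows from Step~1; the parabolic Campanato characterization of Hölder spaces then improves the a~priori exponent of \cite{Athparabolic,ArU} and delivers $u_{x_n}\in C^{1/2,1/4}_{x,t}$ up to $\{x_n=0\}$. For the remaining components of $\grad u$: for $i<n$ the functions $u_{x_i}$ are caloric in $\R^n_+$, vanish on the interior of the coincidence set, have vanishing conormal derivative on the interior of the non-coincidence set, and are monotone in $x_i$ by the tangential convexity $u_{x_ix_i}\ge0$; feeding the bound just obtained for $u_{x_n}$ (hence for $u_{x_nx_n}=\partial_t u-\Delta' u$ on $\{x_n=0\}$) into the standard estimates for this mixed Dirichlet--Neumann problem gives the same $r^{1/2}$ growth at free boundary points. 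Collecting everything yields $\grad u\in C^{1/2,1/4}_{x,t}$ up to the coincidence set.

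The delicate point is the passage from the single scalar quantity $\varphi$ to a genuinely \emph{two-sided} pointwise bound on $w=u_{x_n}$. On $\{x_n=0\}$ one only has the one-sided information $w\le0$, with $w=0$ off the coincidence set, so the positive part $w^+$ vanishes on $\{x_n=0\}$, reflects evenly to a subcaloric function, and is readily controlled by the Morrey bound together with a De Giorgi iteration; the negative part $w^-$, however, equals $\partial_\nu u\ge0$ on the coincidence set and does not vanish on $\{x_n=0\}$, so controlling it near the coincidence set --- where the Signorini sign condition degenerates --- must be done by hand, playing the subcaloricity of $w^-$ in $\R^n_+$ against the Morrey bound and the nonnegativity of $u$ on $\Gamma$ (recall $u^-$ itself vanishes on all of $\Gamma$). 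This, rather than any soft compactness, is where the monotonicity formula and the structure of the problem actually enter, and it is the step I expect to require the most care.
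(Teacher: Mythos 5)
Your route (monotonicity $\Rightarrow$ Morrey decay of the energy $\Rightarrow$ parabolic Campanato) is sensible and close in spirit to the frequency-function treatment of \cite{DGPT}, but it is not what the paper does, and your sketch has a genuine gap that you in fact flag yourself at the end without closing: the passage from the scalar $\varphi$ to a two-sided pointwise bound on $w$. The paper's resolution is a structural observation you did not make. By the Signorini complementarity, at a.e.\ point of $\{x_n=0\}$ either $w=u_{x_n}=0$ (interior of the non-contact set) or $\partial_\nu w=-u_{x_nx_n}=0$ (interior of the contact set: there $u\equiv0$ on the hyperplane and $u$ caloric force $u_{x_nx_n}=u_t-\Delta'u=0$ trace-wise). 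Hence $\partial_\nu(w^2)=2w\,\partial_\nu w=0$ a.e.\ on $\{x_n=0\}$, so the \emph{even} reflection of the full $w^2=(w^+)^2+(w^-)^2$ --- not of $w^+$ or $w^-$ separately --- is a global subcaloric function in $\R^n$. The sub-mean-value property then bounds $w^2(x,t)$ by a heat-kernel average at earlier times, and the Gaussian Poincar\'e coming from Lemma~\ref{Lemma3} (applied on each spatial slice, using that $w$ vanishes on half the hyperplane by convexity of the contact set) converts that average into exactly the weighted gradient energy that $\varphi(r)\le C$ controls. Once $w^2$ is recognized as a single global subcaloric function, your two-sidedness worry disappears; there is no $w^-$ to handle ``by hand.''

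Independently of that, the parabolic Poincar\'e plus dyadic telescoping step you call ``legitimate since $w$ is caloric'' is not justified on a half-cylinder. $w$ is caloric only in $\R^n_+$, and the mixed free-boundary-dependent conditions on $\{x_n=0\}$ produce boundary integrals in the weak formulation (e.g.\ a term of the type $\int_{\{x_n=0\}}(w-\bar w)\zeta^2\,\partial_\nu w$) that do not vanish for arbitrary test functions, so the standard caloric space-time Poincar\'e does not apply directly. The paper avoids this entirely by replacing it with the spatial Gaussian Poincar\'e of Lemma~\ref{Lemma3}, which only needs that $w$ vanishes on half of $\{x_n=0\}$ at each time, and by using the subcaloricity of $w^2$ in place of any Campanato telescoping. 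If you want to keep the Morrey/Campanato route you would need a half-cylinder Poincar\'e that explicitly accounts for the boundary terms, and the $\partial_\nu(w^2)=0$ observation above is again the natural ingredient to make that work.
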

\begin{proof}
It is enough to prove that $u$ tends to zero in a parabolic $C^1$ fashion as $(x,t)$, a point in the noncoincidence set, approaches a point $(x_0,t_0)$ in the coincidence set which we take to be the origin. Set $w=u_{x_n}$, then $w$ satisfies the hypothesis of Lemma \ref{Lemma4}. In particular, $w$ vanishes at the origin therefore
\begin{equation}\label{star1}
\frac{1}{t^{1/2}}\int_{-t}^0\int_{\R^n_+}|\grad w(x,s)|^2G(x,-s)dxds\leq C.
\end{equation}
Since $w$ vanishes on at least half of the space for all $t\leq 0$, the Poincare inequality implies that
\begin{equation}\label{star2}
\int_{\R^n_+}w^2(y,-r^2)G(x-y,t+r^2)dy\leq 4r^2\int_{\R^n_+}|\grad w(y,-r^2)|^2G(x-y,t+r^2)dy.
\end{equation}
Since $w^2$ is a subsolution across $x_n=0$ we have, for every $(x,t)\in Q_{r/2}^-$ and $s<r/2$
\begin{equation}\label{star3}
w^2(x,t)\leq \int_{\R^n}w^2(y,s)G(x-y,t-s)dy. 
\end{equation}
Now integrate (\ref{star3}) with respect to $s$ from $-r^2$ to $-r^2/2$ to obtain
\begin{equation}\label{star4}
r^2w^2(x,t)\leq \int_{-r^2}^{-r^2/2}\int_{\R^n}w^2(y,s)G(x-y,t-s)dyds
\end{equation}
and combining with Poincare inequality we have
\begin{equation}\label{star5}
w^2(x,t)\leq 4\int_{-r^2}^{-r^2/2}\int_{\R^n}|\grad w|^2G(x-y,t-s)dyds
\end{equation}
for every $(x,t)\in Q_{r/2}^-$. Hence by (\ref{star1}), the proof is complete.
\end{proof}
Now, we remove the restrictions previously imposed and we show how to improve the $0<\alpha<1$ in the $C^\alpha$ regularity to get $C^{1/2}$. First we prove a lemma, which uses the normal semi-concavity, the tangential semi-convexity, and the time semi-convexity. 
\begin{lemma}\label{Lemma5}
Let $u$ be a solution of (\ref{model2}) in $Q^+_1$ with $\grad u,u_t^+ \in C_{x,t}^{\alpha,\frac{\alpha}{2}}$ Then there exists a $\delta=\delta(\alpha)>0$ such that $$(0,0,t)\notin \Gamma(\{u_{x_n}<-r^{\alpha+\delta}\}\cap Q_r')$$
for every $t\in[-r^2,0]$ and $0<r<1$, where $\Gamma(A)$ denotes the convex hull of the set $A$.
\end{lemma}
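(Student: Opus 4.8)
The plan is to argue by contradiction, using crucially that in the (blow‑up–)normalized setting in which this lemma is applied the origin is a free boundary point, so that $u(0,0,t)\ge 0$ and $u_{x_n}(0,0,t)=0$ for every $t\in[-r^2,0]$. Suppose that for some such $t_*$ and some $0<r<1$ one has $(0,0,t_*)\in\Gamma(E)$, where $E:=\{u_{x_n}<-r^{\alpha+\delta}\}\cap Q_r'$; set $\mu:=r^{\alpha+\delta}$. By Carathéodory's theorem in $\R^{n-1}\times\R$ there are points $(x_i',0,t_i)\in E$ and weights $\theta_i\ge 0$, $\sum_i\theta_i=1$, with $(0,t_*)=\sum_i\theta_i(x_i',t_i)$. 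Since $\partial_\nu u=0$ off the coincidence set, the inequality $u_{x_n}<-\mu<0$ at each $(x_i',0,t_i)$ forces these points into the coincidence set, so $u(x_i',0,t_i)=0$.

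Next I would pass to the slice $\{x_n=h\}$ with $h:=c_0\,\mu^{1/\alpha}=c_0\,r^{(\alpha+\delta)/\alpha}$, the scale at which a $C^\alpha$ quantity varies by $\mu$. Expanding in $x_n$ from the hyperplane and using the normal semi‑concavity $u_{x_nx_n}\le C$,
\[
u(x_i',h,t_i)\ \le\ u(x_i',0,t_i)+h\,u_{x_n}(x_i',0,t_i)+\tfrac{C}{2}h^2\ <\ -\mu h+\tfrac{C}{2}h^2\ \le\ -\tfrac12\mu h
\]
for $c_0$ small (note $h\ll\mu$ since $\alpha<1$). On the other hand $u(\cdot,h,\cdot)$ is semi‑convex in the variables $(x',t)$: the second incremental quotients of $u$ in the $x'$‑ and $t$‑directions are supersolutions of the Signorini problem in the open cylinder, hence — the data being smooth — bounded below, which gives a uniform semi‑convexity constant. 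Jensen's inequality for semi‑convex functions at $(0,h,t_*)=\sum_i\theta_i(x_i',h,t_i)$ then yields
\[
u(0,h,t_*)\ \le\ \sum_i\theta_i\,u(x_i',h,t_i)+\tfrac{C}{2}\sum_i\theta_i\bigl(|x_i'|^2+|t_i-t_*|^2\bigr)\ \le\ -\tfrac12\mu h+C r^2 .
\]
Now pick $\delta=\delta(\alpha)>0$ so small that $E_\alpha:=(\alpha+\delta)\,\tfrac{\alpha+1}{\alpha}<2$ — possible because $\alpha+1<2$ — so that the leading term $-\tfrac12\mu h\sim -c_0\,r^{E_\alpha}$ dominates $C r^2$ once $r$ is small, giving $u(0,h,t_*)\le -c_1\,r^{E_\alpha}$ with $c_1$ proportional to $c_0$.

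Finally I would contradict this with the one‑dimensional estimate over the origin: since $u(0,0,t_*)\ge 0$ and $u_{x_n}(0,0,t_*)=0$, the $C^{\alpha,\alpha/2}$ bound on $\grad u$ forces $u(0,x_n,t_*)=\int_0^{x_n}u_{x_n}(0,\tau,t_*)\,d\tau\ge -c_2\,x_n^{1+\alpha}$, hence $u(0,h,t_*)\ge -c_2 c_0^{1+\alpha}\,r^{E_\alpha}$. Because $c_1\sim c_0$, this lower bound lies strictly above $-c_1\,r^{E_\alpha}$ once $c_0$ is chosen small (a genuine smallness condition on $c_0$ alone, of the form $c_2c_0^{\alpha}<c$), contradicting the bound of the previous paragraph. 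This settles all $r$ below a threshold; for larger $r$ the set $E$ is empty, because $u_{x_n}(0,0,0)=0$ and $\grad u\in C^\alpha$ force $u_{x_n}\ge -Kr^\alpha>-r^{\alpha+\delta}$ on $Q_r'$. I expect the main obstacle to be the bookkeeping of the three scales — the threshold $\mu=r^{\alpha+\delta}$, the normal excursion $h\sim\mu^{1/\alpha}$, and the tangential radius $r$ — arranged so that the gain $\delta$ is precisely what is spent in upgrading the merely‑$C^\alpha$ control of $u_{x_n}$ into a quantitative separation, all with constants depending only on $n$, $\alpha$ and the given $C^\alpha$ norms.
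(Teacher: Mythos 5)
Your strategy (contradiction + Carathéodory + tangential/temporal semi‑convexity + Jensen + a lower bound at the origin) is exactly the paper's route, and your scaling $h\sim r^{(\alpha+\delta)/\alpha}$ and exponent bookkeeping are in the right ballpark. However, there is a genuine gap in the lower‑bound step, and it is exactly the spot where the paper has to work harder.

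You assert at the outset that, because the origin is a free boundary point, $u_{x_n}(0,0,t)=0$ for \emph{every} $t\in[-r^2,0]$, and your final contradiction rests entirely on the estimate $u(0,h,t_*)\ge -c_2 h^{1+\alpha}$, which is only valid under that assertion. But being a free boundary point at $t=0$ gives $u(0,0,0)=0$ and $u_{x_n}(0,0,0)=0$ at $t=0$ only; for $t<0$ the segment $\{(0,0,t)\}$ may lie in the interior of the coincidence set, where the Signorini condition gives just $u_{x_n}(0,0,t)\le 0$. The quantitative lower bound $u_{x_n}(0,0,t)\ge -r^{\alpha+\delta}$ is itself part of what the lemma is meant to establish (since $(0,0,t)\in E$ trivially implies $(0,0,t)\in\Gamma(E)$), so assuming it is circular. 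And the bound one does have from $\grad u\in C^{\alpha,\alpha/2}$, namely $u_{x_n}(0,0,t_*)\ge u_{x_n}(0,0,0)-C|t_*|^{\alpha/2}\ge -Cr^{\alpha}$, is too weak: it yields only $u(0,h,t_*)\ge -Cr^{\alpha}h-c\,h^{1+\alpha}$, and $r^{\alpha}h\gg r^{\alpha+\delta}h=\mu h$ as $r\to0$, so it cannot contradict the Jensen upper bound $u(0,h,t_*)\le -\tfrac12\mu h+Cr^{2}$.

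The paper closes this gap differently: it anchors the lower bound at time $t=0$, where $u(0,h,0)\ge -c_0 h^{1+\alpha}$ holds cleanly from $u(0,0,0)=u_{x_n}(0,0,0)=0$, and then transports this bound to $t=-r^2$ (and, it says, to any $t\in[-r^2,0]$) using two inputs you did not use: the time semi‑convexity $u_{tt}\ge -M_1$ and the H\"older continuity of $u_t^+$ together with $u_t^+(0,0,0)=0$. This gives $u(0,h,t_*)\ge u(0,h,0)-\max\{0,c_1h^{\alpha}r^2\}-\tfrac{M_1}{2}r^4$, where the transport errors $h^{\alpha}r^2$ and $r^4$ are of higher order than $h^{1+\alpha}$ for the chosen $h$, so the comparison with the Jensen upper bound goes through. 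That temporal anchoring and the explicit use of $u_t^+$ is the missing idea in your proposal.

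A minor additional slip: in your last paragraph the inequality $-Kr^{\alpha}>-r^{\alpha+\delta}$ is reversed for $0<r<1$, since $r^{\alpha+\delta}<r^{\alpha}$; the set $E$ need not be empty for $r$ near $1$, and that case should be handled by the same exponent comparison (which works uniformly for $0<r<1$), not by claiming emptiness.
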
 
\begin{proof}
If $$(x',0,-r^2)\in \{u_{x_n}<-r^{\alpha+\delta}\}$$ then $$u(x',h,-r^2)\leq -r^{\alpha+\delta}h+\frac{M}{2}h^2$$ since $u_{x_nx_n}<M$. Take $h=\frac{r^{\alpha+m\delta}}{M}$ for some $m>1$; in this case $$u(x',h,-r^2)\leq -\frac{r^{2\alpha+(m+1)\delta}}{2M}.$$ Moreover, if we restrict the considerations to $|x'|\leq \frac{r}{2M}$ then 
\begin{equation}\label{onesidedelta}
u(x',h,-r^2)+M|x'|^2\leq -\frac{r^{2\alpha+(m+1)\delta}}{4M}
\end{equation}
provided that $\delta<\frac{2(1-\alpha)}{m+1}$. On the other hand, since $u_{tt}>-M_1$ and $u_t^+$ is Holder continuous whose exponent, with no loss of generality, can be taken to be the same $\alpha$ as above, we have
\begin{eqnarray}\label{twosidedelta}
u(0,h,-r^2)&\geq &u(0,h,0)-max\{0,c_1 h^{\alpha}r^2\}-\frac{M_1}{2}r^4\nonumber\\
&\geq &-c_0h^{1+\alpha}-max\{0,c_1 h^{\alpha}r^2\}-\frac{M_1}{2}r^4\nonumber\\
&>&-\bar{c}r^{(\alpha+m\delta)(1+\alpha)}
\end{eqnarray}
 
Finally, if we choose $\delta>\frac{\alpha(1-\alpha)}{\alpha m-1}$ and $m>1+\frac{2}{\alpha}$
we get a contradiction to (\ref{onesidedelta}) above. Note that the same argument applies for any $t\in[-r^2,0]$.
\end{proof}

We provide now our monotonicity formula for solutions to the local situation. 

\begin{lemma}\label{Lemma6non-a}
Let $\delta>0$ and $u$ be a solution to the Signorini problem (\ref{model2}). Set $w=u_{x_n}$ and $$\varphi(r)=\frac{1}{r}\int_{-r^2}^0\int_{\R^n_+}|\grad(\eta w)(x,s)|^2G(x,-s)dxds$$
for $r<1$ where $\eta\in C_0^\infty(B_r)$ with $\eta\equiv 1$ and $\eta_{x_n}|_{B_r\cap\R^{n-1}}=0$. There exists a universal constant $C>0$ such that
\begin{enumerate}
\item[(i)] if $2\alpha+\delta>1$ then $\varphi(r)\leq C$,
\item[(ii)] if $2\alpha+\delta<1$ then $\varphi(r)\leq Cr^{2\alpha+\delta-1}$.
\end{enumerate}
\end{lemma}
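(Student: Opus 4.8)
The plan is to treat $\varphi$ as the localized, cut-off analogue of the exact monotonicity quantity of Lemma~\ref{Lemma4}, and to show it is \emph{almost} monotone — the defect being a negative power of $r$ that is integrable when $2\alpha+\delta>1$ and merely non-integrable when $2\alpha+\delta<1$. Concretely, I would first differentiate $\varphi$ in $r$ (equivalently in $t=r^2$), following the mollification computation in the proof of Lemma~\ref{Lemma4}. Since $w=u_{x_n}$ is caloric in $\R^n_+$ but $\eta$ is only a cut-off, $\eta w$ solves the heat equation up to the source $w\,\Delta\eta+2\,\grad\eta\cdot\grad w$, supported in $\{\grad\eta\neq0\}\subset B_r\setminus B_{r/2}$; in the Gaussian-weighted identity this contributes a term of order $e^{-c/r^2}$ relative to the scales in play, hence negligible against any power $r^{\beta}$ and absorbable.

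After this differentiation, $\varphi'(r)$ reduces, exactly as in Lemma~\ref{Lemma4}, to the combination $\frac{1}{|s|}\int_{\R^n_+}|\grad(\eta w)|^2G-\frac{1}{4|s|^2}\int_{\R^n_+}(\eta w)^2G$ (integrated in $s$), together with a boundary integral on $\Gamma$ built from $w\,w_\nu$, plus the cut-off error above. In the global convex/homogeneous case of Lemma~\ref{Lemma4} the first combination is $\ge0$ by the weighted Poincar\'e inequality of Lemma~\ref{Lemma3} — valid there because $u_{x_n}$ vanishes on the noncoincidence portion of $\R^{n-1}$, which occupies at least half of the thin space — while the boundary term vanishes by the Signorini complementarity. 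Here neither is exact, and this is precisely where Lemma~\ref{Lemma5} enters: at each time level $s\in[-r^2,0]$ the set $\{u_{x_n}<-r^{\alpha+\delta}\}\cap Q_r'$ does not contain the origin in its convex hull, so a supporting hyperplane through the origin yields a half of $B_r'$ on which, using also $u_{x_n}\le0$, one has $|\eta w|\le Cr^{\alpha+\delta}$. Feeding ``$\eta w$ almost vanishes on half of the hyperplane'' into a perturbed form of Lemma~\ref{Lemma3} gives $\int|\grad(\eta w)|^2G\ge\frac{1}{4|s|}\int(\eta w)^2G-(\mathrm{defect})$, and the same information — together with the normal semiconcavity $u_{x_nx_n}<M$ to estimate $w_\nu$ where $w$ is tiny, and the fact that the ``bad'' part of $B_r'$ is pushed off the origin and hence carries little Gaussian mass — controls the boundary term. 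Collecting these, one arrives at a differential inequality of the form $\varphi'(r)\ge-Cr^{2\alpha+\delta-2}$.

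To conclude I would integrate this inequality from $r$ up to a fixed $r_0<1$, noting $\varphi(r_0)<\infty$ since at a fixed scale the integrand is harmless. When $2\alpha+\delta>1$ the integral $\int_r^{r_0}\rho^{2\alpha+\delta-2}\,d\rho$ is bounded uniformly in $r$, so $\varphi(r)\le\varphi(r_0)+C$, which is (i). When $2\alpha+\delta<1$ that same integral is $\le C\,r^{2\alpha+\delta-1}$, a quantity that dominates $\varphi(r_0)$ as $r\to0$, so $\varphi(r)\le C\,r^{2\alpha+\delta-1}$, which is (ii).

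The main obstacle is this quantitative perturbation step: converting the purely qualitative conclusion of Lemma~\ref{Lemma5} (``the origin is not in the convex hull of $\{u_{x_n}<-r^{\alpha+\delta}\}$'') into explicit constants both in the weighted Poincar\'e inequality and in the boundary term, with the sharp power of $r$. It is essential that Lemma~\ref{Lemma5} is geometric rather than pointwise: a crude bound $|u_{x_n}|\le Cr^{\alpha}$ on the bad set would be far too lossy, whereas the convex separation lets one treat that set as negligible against the Gaussian weight concentrated near the origin. A secondary technical point is the dependence of $\eta$ on the scale, which I would handle either by fixing $\eta$ at a reference scale and running the estimate only for $r$ below a fixed fraction of it, or by checking that changing $\eta$ only alters the already-negligible cut-off error.
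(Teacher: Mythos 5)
Your high-level plan coincides with the paper's: differentiate $\varphi$, isolate a Poincar\'e-type combination plus boundary and cutoff errors, feed in Lemma~\ref{Lemma5}, and arrive at $\varphi'(r)\geq -Cr^{2\alpha+\delta-2}$ to be integrated into the dichotomy (i)/(ii). Your treatment of the cutoff error (exponentially small, since $\grad\eta$ lives away from the origin relative to the Gaussian scale) and of the final integration in $r$ is correct, and you rightly flag that $\eta$ should be fixed at a reference scale rather than scaled with $r$.

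The gap is in the step you yourself label ``the main obstacle.'' You propose to insert the \emph{qualitative} output of Lemma~\ref{Lemma5} into a ``perturbed form of Lemma~\ref{Lemma3}'' to obtain $\int|\grad(\eta w)|^2G\geq\frac{1}{4|s|}\int(\eta w)^2G - (\text{defect})$ directly for $\eta w$, which does \emph{not} vanish on any half-plane. But Lemma~\ref{Lemma3} is an exact eigenvalue statement, and such a perturbed inequality (with an additive defect of the correct power of $r$) would itself require a separate quantitative argument that you do not supply. The paper avoids the need for any perturbed eigenvalue estimate by a truncation device: it sets $\overline{w}=-(w+r^{\alpha+\delta})^-$, so that by Lemma~\ref{Lemma5} $\overline{w}$ vanishes \emph{identically} on a half of $\R^{n-1}$ through the origin and Lemma~\ref{Lemma3} applies exactly to $\eta\overline{w}$; since $\overline{w}$ is a one-sided truncation of $w$, the gradient term for $\eta\overline{w}$ is dominated by that for $\eta w$, so the exact Poincar\'e suffices. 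The residual error in $\varphi'(r)$ then comes from the decomposition $w=(w-\overline{w})+\overline{w}$ with $|w-\overline{w}|\leq r^{\alpha+\delta}$, and expanding $(\eta w)^2=\eta^2[(w-\overline{w})^2+2\overline{w}(w-\overline{w})+\overline{w}^2]$ yields the $-\frac32 r^{2\alpha+2\delta-2}$ term directly, with no convex-hull geometry or Gaussian-mass estimate on the ``bad'' set needed. This truncation is precisely the quantitative conversion of Lemma~\ref{Lemma5} that your sketch lacks, and it is the genuinely load-bearing idea in the proof; the rest, including the role of $f$ in the boundary term (it supplies the $-Cr^\alpha$ you anticipate), proceeds essentially as you describe.
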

\begin{proof}
We compute
\begin{equation}\label{inter6.1a}
|\grad(\eta w)|^2=\frac{1}{2}(\Delta (\eta w)^2-\partial_t(\eta w)^2)
-2\eta w\grad \eta\grad w-\eta w^2\Delta \eta
\end{equation}
and
\begin{eqnarray}\label{inter6.2a}
\varphi'(r) &=& -\frac{1}{2r^2}\int_{-r^2}^0\int_{\R^n_+}(\Delta (\eta w)^2-\partial_t(\eta w)^2)G(x,-s)dxds+2\int_{\R^n_+}|\grad(\eta w)(x,-r^2)|^2G(x,r^2)dx
\nonumber\\ 
&+&\frac{1}{r^2}\int_{-r^2}^0\int_{\R^n_+}(
2\eta w \grad \eta\grad w+\eta w^2 \Delta\eta)dxdt.
\end{eqnarray}

We integrate by parts to obtain
\begin{eqnarray}\label{inter6.3a}
\varphi'(r) &=& \frac{1}{2r^2}\int_{-r^2}^0\int_{\R^n_+}(\grad(\eta w)^2\grad G+\partial_t(\eta w)^2G)dxds-\frac{1}{2r^2}\int_{-r^2}^0\int_{\R^{n-1}}[(\eta w)^2]_\nu(x',0,s)G(x',0,-s)dx'ds\nonumber\\ 
&+&\frac{1}{r^2}\int_{-r^2}^0\int_{\R^n_+}(
2\eta\grad w\grad \eta+\eta w^2\Delta \eta)G(x,-s)dxds+2\int_{\R^n_+}|\grad(\eta w)(x,-r^2)|^2G(x,r^2)dx.\nonumber\\
\end{eqnarray}

Integrating again by parts, we obtain
\begin{eqnarray}\label{inter6.4a}
\varphi'(r) &=& -\frac{1}{2r^2}\int_{-r^2}^0\int_{\R^n_+}(\eta w)^2(\Delta G+\partial_tG)dxds-\frac{1}{2r^2}\int_{-r^2}^0\int_{\R^{n-1}}(\eta w)^2_\nu G(x,-s)dx'ds\nonumber\\ 
&-&\frac{1}{2r^2}\int_{\R^n_+}(\eta w)^2(x,-r^2)G(x,r^2)dx+\frac{1}{r^2}\int_{-r^2}^0\int_{\R^n_+}(
\eta \Delta \eta w^2
+2\eta w\grad\eta\grad w)G(x,-s)dxds\nonumber\\ 
&+&2\int_{\R^n_+}|\grad(\eta w)(x,-r^2)|^2G(x,r^2)dx.
\end{eqnarray}
Since $w(0,0)=0$, we have
\begin{eqnarray}\label{inter6.5a}
\varphi'(r) &=&-\frac{1}{2r^2}\int_{\R^n_+}(\eta w)^2(x,-r^2)G(x,r^2)dx+2\int_{\R^n_+}|\grad(\eta w)(x,-r^2)|^2G(x,r^2)dx \nonumber\\
&-&\frac{1}{2r^2}\int_{-r^2}^0\int_{\R^{n-1}}2\eta w\eta w_\nu G(x,-s)dx'ds
+\frac{1}{r^2}\int_{-r^2}^0\int_{\R^n_+}\eta
\Delta \eta w^2G(x,-s)dxds\nonumber\\
&+&\frac{2}{r^2}\int_{-r^2}^0\int_{\R^n_+}\eta w \grad\eta\grad w G(x,-s)dxds \nonumber\\
\end{eqnarray}
or
\begin{eqnarray}\label{inter6.6a}
\varphi'(r) &=&-\frac{1}{2r^2}\int_{\R^n_+}(\eta w)^2(x,-r^2)G(x,r^2)dx+2\int_{\R^n_+}|\grad(\eta w)(x,-r^2)|^2G(x,r^2)dx \nonumber\\
&+&\frac{1}{2r^2}\int_{-r^2}^0\int_{\R^n_+}\grad \eta^2\grad w^2 G(x,-s)dxds+\frac{1}{r^2}\int_{-r^2}^0\int_{\R^n_+}\eta \Delta \eta w^2 G(x,-s)dxds\nonumber\\
&+&\frac{1}{r^2}\int_{-r^2}^0\int_{\R^{n-1}}\eta^2wf \ G(x,-s)dx'ds
\end{eqnarray}
and finally
$$\varphi'(r)\geq -\frac{1}{2r^2}\int_{\R^n_+}(\eta w)^2(x,-r^2)G(x,r^2)dx+2\int_{\R^n_+}|\grad(\eta w)(x,-r^2)|^2G(x,r^2)dx-Cr^\alpha.$$

Now, consider the truncated function $\overline{w}=-(w+r^{\alpha+\delta})^-$ and note that
$$\int_{\R^n_+}|\grad(\eta \overline{w})(x,-r^2)|^2G(x,r^2)dx\leq \int_{\R^n_+}|\grad(\eta w)(x,-r^2)|^2G(x,r^2)dx.$$
Hence
$$\varphi'(r)\geq -\frac{1}{2r^2}\int_{\R^n_+}[\eta (w-\overline{w})+\eta\overline{w}]^2(x,-r^2)G(x,r^2)dx+2\int_{\R^n_+}|\grad(\eta \overline{w})(x,-r^2)|^2G(x,r^2)dx-Cr^\alpha$$
and
$$\varphi'(r)\geq -\frac{1}{2r^2}\int_{\R^n_+}\eta^2[(w-\overline{w})^2+2\overline{w}(w-\overline{w})]G(x,r^2)dx-Cr^\alpha$$
or
$$\varphi'(r)\geq-\frac{3}{2}r^{2\alpha+2\delta-2}-Cr^\alpha\geq -\frac{3}{2}r^{2\alpha+\delta-2}.$$
Therefore
$$\varphi(1)-\varphi(r)\geq -\frac{3}{2}\bigg(\frac{1}{2\alpha+\delta-1}\bigg)+\frac{3}{2}\bigg(\frac{1}{2\alpha+\delta-1}\bigg)r^{2\alpha+\delta-1}.$$
Since $\varphi(1)$ is universally bounded the proof is complete.

\end{proof}

Next, we state our main result of this subsection:

\begin{thm}\label{Theorem5}
Let $u$ the solution of (\ref{model2}), then $\grad u$ is $C_{x,t}^{\frac{1}{2},\frac{1}{4}}$ up to the hyperplane $\R^{n-1}$.
\end{thm}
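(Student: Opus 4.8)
The plan is to run a bootstrap argument: we already know from \cite{Athparabolic,ArU} that $\grad u, u_t^+\in C_{x,t}^{\alpha,\alpha/2}$ for some $0<\alpha\le\frac12$, and we want to upgrade the exponent to $\frac12$. The engine is Lemma \ref{Lemma6non-a}, which controls the weighted Dirichlet-type quantity $\varphi(r)$ associated to $w=u_{x_n}$: in the subcritical regime $2\alpha+\delta<1$ it gives the decay $\varphi(r)\le Cr^{2\alpha+\delta-1}$, and once $2\alpha+\delta>1$ it gives a bound $\varphi(r)\le C$. The parameter $\delta=\delta(\alpha)>0$ is the one furnished by Lemma \ref{Lemma5}, which says (via the normal semi-concavity $u_{x_nx_n}<M$, the tangential semi-convexity, and the time semi-convexity $u_{tt}>-M_1$ together with H\"older continuity of $u_t^+$) that the bad set where $u_{x_n}<-r^{\alpha+\delta}$ cannot reach, in a convex-hull sense, the free boundary point; this is what lets us pass from the one-sided pointwise bound on $w^-$ near the hyperplane to the genuine $L^2$-type control of $\grad(\eta w)$ near the origin.

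First I would set up the iteration cleanly. Starting exponent $\alpha_0=\alpha$; given $\alpha_j<\frac12$, apply Lemma \ref{Lemma5} to get $\delta_j=\delta(\alpha_j)>0$, and apply Lemma \ref{Lemma6non-a}(ii) to obtain $\varphi(r)\le Cr^{2\alpha_j+\delta_j-1}$. The next step — the technical heart — is to convert this Dirichlet-decay estimate into a pointwise decay statement: namely that $|w(x,t)|=|u_{x_n}(x,t)|\le Cr^{\alpha_{j+1}}$ in $Q_{r/2}^-$ for a new exponent $\alpha_{j+1}>\alpha_j$. This is done exactly as in Theorem \ref{Theorem4}: because $w$ (more precisely its relevant truncation, which vanishes on at least half of the hyperplane for each time slice) satisfies a Poincar\'e inequality of the form \eqref{star2}, and because $w^2$ is a subsolution across $\{x_n=0\}$ so that the mean-value inequality \eqref{star3}--\eqref{star5} holds, one bounds $w^2(x,t)$ by the weighted integral of $|\grad w|^2$, i.e. by $\varphi(r)$ up to controlled factors. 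Combined with the analogous normal-direction argument (integrating $u_{x_n}$ and using $u=0$ on the coincidence set plus $u_{x_nx_n}<M$), the decay of $\varphi$ translates into a gain in the H\"older exponent of $\grad u$. Then I would check that the recursion $\alpha_{j+1}$ in terms of $\alpha_j$ strictly increases and, because each gain is bounded below away from zero as long as we stay below $\frac12$, reaches the threshold $2\alpha+\delta\ge 1$ in finitely many steps; at that point Lemma \ref{Lemma6non-a}(i) gives $\varphi(r)\le C$, which by the same conversion yields $|w(x,t)|\le Cr^{1/2}$, hence $\grad u\in C^{1/2}_x$, and the parabolic scaling $|t|^{1/4}$ from the heat equation gives the time exponent $\frac14$.

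Two points need care. The first, and the one I expect to be the main obstacle, is the matching of exponents in the conversion step and verifying the recursion genuinely terminates: one must track how $\alpha_{j+1}$ depends on $\alpha_j$ and $\delta_j$ (the truncation $\overline w=-(w+r^{\alpha+\delta})^-$ used in Lemma \ref{Lemma6non-a} costs exactly $r^{\alpha+\delta}$, and the final H\"older exponent extracted is governed by $\min(\frac12, \alpha+\delta)$ up to the $\varphi$-decay), and confirm the increment does not degenerate before hitting $\frac12$. The second is the passage from the decay of a weighted integral over $\R^n_+$ to a pointwise estimate near the hyperplane with the cutoff $\eta$ present: one must absorb the commutator terms $\eta\grad\eta\grad w$ and $\eta\Delta\eta\, w^2$ (these are the $Cr^\alpha$ error terms already appearing in the proof of Lemma \ref{Lemma6non-a}), and one must justify that the subsolution/Poincar\'e machinery of Theorem \ref{Theorem4} applies to the localized object $\eta w$; but these are routine given the earlier lemmas. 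Finally, one reduces to a free boundary point at the origin as in Theorem \ref{Theorem4} and notes the estimate is interior-uniform, so it globalizes to give $\grad u\in C_{x,t}^{1/2,1/4}$ up to $\R^{n-1}$.
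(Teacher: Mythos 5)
Your proposal matches the paper's approach: the paper likewise establishes the subsolution mean-value plus Gaussian Poincar\'e bound
$(\eta\overline w)^2(x,t)\le C\int_{-r^2}^{-r^2/2}\int_{\R^n_+}|\grad(\eta\overline w)|^2G(x-y,t-s)\,dy\,ds$
and then invokes the dichotomy of Lemma \ref{Lemma6non-a}, delegating the exponent bootstrap in one line to the proof of Theorem 5 in \cite{AC}. The explicit iteration you lay out --- Lemma \ref{Lemma5} furnishing $\delta(\alpha_j)$, Lemma \ref{Lemma6non-a}(ii) giving $\varphi(r)\le Cr^{2\alpha_j+\delta_j-1}$, conversion to pointwise decay via the Poincar\'e/subsolution machinery, and termination once $2\alpha+\delta\ge1$ --- is a correct unpacking of that delegated step.
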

\begin{proof}
Let $w=u_{x_n}$ and $\overline{w}$ be as in the proof of Lemma \ref{Lemma6non-a}. Fix $s>0$, choose $R>0$ large enough and $\varepsilon<s$. We define a cut-off function $\eta=\eta(x)$ so that $\text{supp}\eta\in B_{R+1}(0)$, $\eta \equiv 1$ on $B_R(0)$ and $|\grad\eta|\leq C$. 

Then 
\begin{equation}\label{thm5-1}
(\Delta-\partial_\xi)(\eta^2\overline{w})=2\eta^2|\grad \overline{w}|^2+4\overline{w}\eta\grad\overline{w}\grad\eta
+2(\eta\Delta\eta+|\grad\eta|^2)\overline{w}^2+2\eta^2\overline{w}(\Delta\overline{w}-\partial_\xi\overline{w}).
\end{equation}
Recall that $(\Delta+\partial_\xi)G(x,-\xi)=\delta_{(0,0)}$, therefore using (\ref{thm5-1}), an integration by parts along with the fact that $\eta$ is compactly supported we obtain
$$2\int_{-s}^{-\varepsilon}\int_{\R^n_+}\eta^2|\grad\overline{w}|^2G(x,-\xi)dxd\xi=-\int_{\R^n_+}\eta^2\overline{w}^2G(x,\varepsilon)dx+\int_{\R^n_+}\eta^2\overline{w}^2G(x,s)dx$$ 
$$-4\int_{-s}^{-\varepsilon}\int_{\R^n_+}\overline{w}
\eta\grad\eta\grad
\overline{w}G(x,-\xi)dxd\xi-2\int_{-s}^{-\varepsilon}\int_{\R^n_+}(\eta\Delta\eta+|\grad\eta|^2)\overline{w}^2G(x,-\xi)dxd\xi$$
\begin{equation}\label{thm5-2}
-2\int_{-s}^{-\varepsilon}\int_{\R^n_+}\eta^2\overline{w}(\Delta \overline{w}-\partial_{\xi}\overline{w})G(x,-\xi)dxd\xi.
\end{equation}
Observe that $$\int_{-s}^{-\varepsilon}\int_{\R^n_+}\overline{w}\eta|\grad\eta||\grad\overline{w}|G(x,-\xi)dxd\xi\leq C\int_{-s}^{-\varepsilon}\int_{B_{R+1}^+\setminus B_{R}^+}|\overline{w}||\grad\overline{w}|\frac{e^{-R^2/4|\xi|}}{|\xi|^{n/2}}dxd\xi$$
$$\leq Ce^{-R^2/4+\varepsilon_0}\int_{-s}^{0}\int_{B_{R+1}^+\setminus B_{R}^+}|\overline{w}||\grad\overline{w}|dxd\xi.$$
Using Cauchy-Schwartz, we conclude that the last three terms on the right hand side of (\ref{thm5-2}) behave the same, in particular they decay to zero as $R\rightarrow \infty$. Therefore we conclude that
$$(\eta\overline{w})^2(0,0)\leq \int_{\R_+^n}(\eta\overline{w})^2G(x,s)dx$$
or, after rescaling,
\begin{equation}\label{thm5-3}
(\eta\overline{w})^2(x,t)\leq \int_{\R^n_+}(\eta\overline{w})^2(y,s)G(x-y,t-s)dy.
\end{equation}
for every $(x,t)\in Q^+_{r/2}$ and $-r^2<s<-\frac{r^2}{2}$. By Poincar\'{e} inequality for Gaussian measures (see \cite{Beckner}) we have that
\begin{equation}\label{thm5-4}
\int_{\R^n_+}(\eta\overline{w})^2(y,s)G(x-y,t-s)dy\leq 2 |s|\int_{\R^n_+}|\grad(\eta\overline{w})(y,s)|^2G(x-y,t-s)dy
\end{equation}
for $(x,t)\in Q^+_{r/2}$ and $-r^2<s<-\frac{r^2}{2}$. Combine (\ref{thm5-3}) and (\ref{thm5-4})
to obtain
\begin{equation}\label{thm5-5}
(\eta\overline{w})^2(x,t)\leq C |s|\int_{\R^n_+}|\grad(\eta\overline{w})(y,s)|^2G(x-y,t-s)dy
\end{equation}
for every $(x,t)\in Q^+_{r/2}$ and $-r^2<s<-\frac{r^2}{2}$. An integration with respect to $s$ in (\ref{thm5-5}) shows that
$$(\eta\overline{w})^2(x,t)\leq C\int_{-r^2}^{-r^2/2}\int_{\R^n_+}|\grad(\eta\overline{w})(y,s)|^2G(x-y,t-s)dyds$$
for every $(x,t)\in Q^+_{r/2}$. Now the dichotomy for $\varphi(r)$ in Lemma \ref{Lemma6non-a} provides a $C^{1/2}$ modulus of continuity for $w$, as in the proof of Theorem 5 in \cite{AC}.
\end{proof}

\subsection{H\"{o}lder continuity of the time derivative near a free boundary point of positive parabolic density}\label{optimal time}

Although the positive time derivative is always H\"{o}lder continuous
(see \S\ref{subsection4.2}), one does not expect to obtain continuity of the full time derivative without further restrictions. The purpose of this section is to show that, indeed, H\"{o}lder continuity of the full time derivative can be achieved near free boundary points of positive parabolic density with respect to the coincidence set. In order to achieve this desired result we employ the  well known "hole filling" method of Widman (see \cite{Widman}) adapted for parabolics by Struwe (see \cite{Struwe}). As it was mentioned in the introduction,  the results of the present section are independent of the quasi-convexity.  

\begin{defn}\label{positive density}
A free boundary point $(x_0',0,t_0)$ is of positive parabolic density with respect to the coincidence set if there exist positive constants $c>0$ and $r_0>0$ such that $|Q_r'(x'_0,0,t_0)\cap\{u=0\}|\geq c|Q'_r(x'_0,0,t_0)|$ $ \forall r<r_0$.
\end{defn}

So the main result of this subsection is stated as follows::

\begin{thm}\label{Theorem6}
Let $(x_0,t_0)$ be a free boundary point of positive parabolic density with respect to the coincidence set to problem (\ref{model2}). Then $u_t$ is H\"{o}lder continuous in a neighborhood of $(x_0,t_0)$.
\end{thm}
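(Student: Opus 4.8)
The plan is to run a De Giorgi / Campanato iteration for $v:=u_t$, with the Widman--Struwe hole-filling technique as the driving device and the positive parabolic density of the coincidence set as the key structural input, which forces a Poincar\'e inequality for $v$.

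First I would differentiate (\ref{model2}) in $t$. In $\Omega\times(-T,T]$ the function $v=u_t$ solves $\Delta v-\partial_t v=f_t$, and on $\Gamma\times(-T,T]$ the complementarity relations $u\ge 0,\ \partial_\nu u\ge 0,\ u\,\partial_\nu u=0$ differentiate to an \emph{unsigned} Signorini relation: on the (open) non-coincidence set $\partial_\nu u\equiv 0$, hence $\partial_\nu v=0$, while on the interior of the coincidence set $u\equiv 0$ on $\Gamma$, hence $v=0$; so $v\,\partial_\nu v=0$ a.e. on $\Gamma$. The consequence I would exploit is that if $\zeta$ is a cutoff allowed to be nonzero on $\Gamma$, then in the weak formulation for $v$ the boundary term $\int_\Gamma\zeta^2 v\,\partial_\nu v$ vanishes; one may keep working on the half-cylinders $Q_r^+$, or reflect $v$ evenly across $\{x_n=0\}$ (the jump of $\partial_{x_n}v$ thereby created is a measure carried by $\{v=0\}$, hence invisible to the test function $\zeta^2 v$). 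One starts from $v\in L^2_{loc}$, available from the standard energy estimates for (\ref{model2}).

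Second, testing with $\zeta^2 v$ ($\zeta\equiv 1$ on $Q_{r/2}$, $\zeta\equiv 0$ off $Q_r$) and the usual manipulations give a Caccioppoli inequality
$$\sup_{t}\int_{B_{r/2}^+}v^2\,dx+\int_{Q_{r/2}^+}|\grad v|^2\,dxdt\ \le\ \frac{C}{r^2}\int_{Q_r^+\setminus Q_{r/2}^+}v^2\,dxdt+Cr^{n+2},$$
the last term coming from $\|f_t\|_{\infty}<\infty$ (recall $f$ is smooth and $x_n$-independent). Third, using the positive density I would produce the matching Poincar\'e inequality: since $v$ vanishes a.e. on the coincidence set and $|Q_r'(x_0',0,t_0)\cap\{u=0\}|\ge c|Q_r'|$ for every $r<r_0$, one has $\int_{Q_r^+}v^2\le Cr^2\int_{Q_r^+}|\grad v|^2$ uniformly for $r<r_0$. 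The parabolic subtlety here --- exactly Struwe's point --- is that one cannot simply slice in time: one controls $\int_{Q_r^+}|v-m_r(t)|^2$ by $r^2\int_{Q_r^+}|\grad v|^2$ slice-wise (with $m_r(t)$ the spatial mean of $v(\cdot,t)$ over $B_r^+$), then controls the time-oscillation of $m_r$ by the Dirichlet energy through the equation $\Delta v-\partial_t v=f_t$, and finally pins $m_r$ near $0$ because $v$ vanishes on a fixed fraction of the thin cylinder $Q_r'$ --- a trace inequality turning this thin vanishing into $|Q_r^+|\,\overline{m}_r^{\,2}\lesssim r^2\int_{Q_r^+}|\grad v|^2$.

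Fourth, combining the two inequalities and ``filling the hole'' (bound $\int_{Q_r^+\setminus Q_{r/2}^+}v^2\le\int_{Q_r^+}v^2\le Cr^2\int_{Q_r^+}|\grad v|^2$ in the Caccioppoli, then add a multiple of $\int_{Q_{r/2}^+}|\grad v|^2$ to both sides) yields $\int_{Q_{r/2}^+}|\grad v|^2\le\theta\int_{Q_r^+}|\grad v|^2+Cr^{n+2}$ with $\theta\in(0,1)$; iterating gives $\int_{Q_r^+}|\grad v|^2\le Cr^{\gamma}$ for some $\gamma>0$, and with the Poincar\'e inequality again $\int_{Q_r^+}|v-(v)_{Q_r^+}|^2\le Cr^{\gamma+2}$. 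A parabolic Morrey/Campanato argument (as in the proof of Theorem 5 in \cite{AC}) converts this into a H\"older modulus for $v=u_t$ at $(x_0,t_0)$. Finally, running the same argument centered at every free boundary point near $(x_0,t_0)$ that retains (a uniform version of) positive density, together with the fact that off the free boundary $u_t$ is already smooth by standard parabolic theory --- the lateral condition there being either pure Neumann or pure Dirichlet --- upgrades the pointwise estimate to H\"older continuity of $u_t$ in a full neighborhood. The hardest part will be Step 3: the honest parabolic Poincar\'e/hole-filling bookkeeping, controlling the time-oscillation of the spatial mean of $v$ by the energy while the only zero-data available sits on the thin manifold $\Gamma$ and only on a positive fraction of it; this is where the density hypothesis is really used and where one must follow Struwe's adaptation rather than the naive elliptic template. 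A secondary point is to make the decay uniform over a neighborhood of $(x_0,t_0)$ (propagation of non-degeneracy) and to check that the exponent $\gamma$ delivered by the iteration is large enough to feed the Campanato characterization --- if not, the scheme must be bootstrapped, using the improved integrability/continuity of $v$ to re-run the Caccioppoli--Poincar\'e step with a better exponent, ultimately aiming at the $C^{1/2,1/4}$ regularity of the space derivative.
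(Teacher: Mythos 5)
Your overall strategy --- Widman--Struwe hole-filling driven by a parabolic Poincar\'e inequality which in turn feeds off the positive parabolic density of the coincidence set --- is exactly the paper's. Two differences are worth noting. First, a minor structural one: the paper begins by observing that $u_t^+$ is already H\"older continuous by \S\ref{subsection4.2} and so it suffices to treat $u_t^-$; you try to run the scheme for the full $v=u_t$. That per se is not fatal (on the penalized level the boundary term $-\int\beta'_\varepsilon(v^\varepsilon)^2$ still has a favorable sign when you test with $\zeta^2 v^\varepsilon$, and $v$ vanishes on the interior of the coincidence set regardless of sign), but it unnecessarily forgoes the simplification §4.2 offers.

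The substantive gap is in your fourth step. You test only with $\zeta^2 v$, derive a Caccioppoli inequality, and then expect hole-filling to produce a Morrey decay $\int_{Q_\rho}|\grad v|^2\le C\rho^\gamma$ feeding a Campanato characterization. But the parabolic Campanato criterion requires roughly $\int_{Q_\rho}|v-\bar v_\rho|^2\le C\rho^{\,n+2+2\beta}$, i.e. $\int_{Q_\rho}|\grad v|^2\le C\rho^{\,n+2\beta}$ after Poincar\'e, whereas the hole-filling iteration
\[
\int_{Q_{\rho/2}}|\grad v|^2\ \le\ \theta\int_{Q_\rho}|\grad v|^2+C\rho^{\,n+2},\qquad\theta=\tfrac{C}{1+C}\in(0,1),
\]
only yields $\int_{Q_\rho}|\grad v|^2\le C\rho^{\,\gamma}$ with $\gamma=-\log_2\theta$, which can be arbitrarily small and in particular well below $n$. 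Your footnote that "the scheme must be bootstrapped" will not close this --- iterating a fixed-contraction estimate does not improve $\gamma$. What the paper actually does (and what Struwe's parabolic adaptation of Widman is built around) is to weight the test function by a regularization $G_\delta^{(\xi,\tau)}$ of the heat kernel centered at the point of interest: they multiply the equation by $\zeta^2\,G_\delta^{(\xi,\tau)}\,(v^\varepsilon)^-$. Since $-(\Delta+\partial_t)G_\delta^{(\xi,\tau)}$ is a probability-like measure on the heat ball $E_\delta(\xi,\tau)$, this produces \emph{directly} a pointwise term $\sup_{Q^+_\rho}(v^-)^2$ in the monitored quantity
\[
\omega(\rho):=\int_{Q^+_\rho}G\,|\grad v^-|^2\,dxdt+\sup_{Q^+_\rho}(v^-)^2,
\]
so the contraction $\omega(\tfrac{1}{5})\le\lambda\,\omega(1)+c$ hands you the H\"older modulus for $v^-$ with no Campanato step and no exponent shortfall. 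Equivalently, the weight $G\sim\rho^{-n}$ on $Q_\rho$ converts the small decay exponent $\gamma$ into the correct Morrey scaling $\int_{Q_\rho}|\grad v^-|^2\le C\rho^{\,n+\gamma}$. Without this Green's-function weight your scheme cannot deliver H\"older continuity, no matter how carefully the Poincar\'e and hole-filling bookkeeping is done.
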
 

\begin{proof}Since, by \S\ref{subsection4.2}, $u_t^+$ is H\"{o}lder continuous, it suffices to prove the theorem for $u_t^-$. Actually, we will show that $u_t^-$ decays to zero in parabolic cylinders shrinking to the free boundary point $(x_0,t_0)$. We consider the penalized solution $u^\varepsilon$ of (\ref{model2}) in $Q_r^+(x_0,t_0)$ with $r<r_0$, where $r_0$ is as in Definition \ref{positive density}. For simplicity we take $(x_0,t_0)=(0,0)$ and $r=1$. Differentiate with respect to $t$ to have as in (\ref{4.2.2}) 
\begin{equation}\label{penalized equation}
\begin{cases}  
\Delta v^\varepsilon-\partial_t v^\varepsilon=f^\varepsilon_t,   &{\rm{in}} \ \  Q_1^+ \cr  -\partial_\nu v^\varepsilon
=\beta'_\varepsilon(u^\varepsilon)v^\varepsilon  &{\rm{on}} \ \  Q_1' \cr
\end{cases}
\end{equation}
where  $v^\varepsilon:=(u^\varepsilon)_t$. 
For any $(\xi,\tau)\in Q^+_\frac{1}{5}$ we want to multiply the equation by an appropriate test function and integrate by parts over the set 
$Q_\frac{3}{5}^+(\xi,\tau):=Q_\frac{3}{5}(\xi,\tau)\cap \{x_n\geq0\}\subset Q^+_1$. 
This will lead us to an estimate which will iterated to yield the desired result.

The aforesaid appropriate test function will be the product of following three functions: 

The first one is the square of a smooth function $\zeta(x,t)$ supported in 
$Q_\frac{3}{5}^+(\xi,\tau)$ such that 
$\zeta\equiv 1$ for every $(x,t)\in Q_\frac{2}{5}^+(\xi,\tau)$, $|\grad\zeta|\leq c$\ with\ 
$supp(\grad\zeta)\subset (B_\frac{3}{5}^+(\xi,\tau)\setminus B_\frac{2}{5}^+(\xi,\tau))\times(\tau-\frac{9}{25},\tau],
\ 0\leq \zeta_t \leq c$  with
$\ supp (\zeta_t)\subset B_\frac{3}{5}(\xi,\tau) \times (\tau-\frac{9}{25},\tau-\frac{4}{25})$.

 The second one is a smoothing of the fundamental solution $G(x,t)$ of the heat equation (see (\ref{fundamental solution})), i.e.
$$G_{\delta}^{(\xi,\tau)} (x,t):=
(G(x-\xi,\tau-t)\chi(x,t)_{E^c_{\delta}(\xi,\tau)}+p(x-\xi,t-\tau)\chi(x,t)_{E_{\delta}(\xi,\tau)})\chi(x,t)_{\{t<\tau\}}$$ 
where $\ E_\delta(\xi,\tau):=\{(x,t)\in R^{n+1}:t\leq\tau,\ 
  G(x-\xi,t-\tau)\geq\frac{1}{\delta^n}\}$,  the "heat" ball of "radius" $\delta$ about $(\xi,\tau)$,
and $p(x,t):=\frac{1}{\delta^n}(\frac{|x|^2}{4t}+\log\frac{e\delta^n}{(4\pi|t|)^{n/2}})\chi(x,t)_{\{t<0 \}}$.
Notice that $G^{(\xi,\tau)}_{\delta}$ is a $C^1$ function everywhere in $\R^{n+1}$ except at 
$(\xi,\tau)$.  In order to deal with this problem we just translate the singularity outside of our domain by a small amount $\varepsilon'>0$ and then we let $\varepsilon'$ to tend to zero, for simplicity we omit this technicality.

Finally the third function is $(v^{\varepsilon})^-$ which can be smoothed out by the standard way; again we omit it for the sake of simplicity.

Therefore we multiply the equation in (\ref{penalized equation}) by $\zeta^2 G_{\delta}^{(\xi,\tau)} (v^{\varepsilon})^-$  and integrate by parts over 
$Q_\frac{3}{5}^+(\xi,\tau)$ to obtain

$$\int_{Q_\frac{3}{5}^+(\xi,\tau)}(\grad (\zeta^2 G_{\delta}^{(\xi,\tau)} (v^{\varepsilon})^-) \grad v^\varepsilon+(\zeta^2 G_{\delta}^{(\xi,\tau)} (v^{\varepsilon})^-)\partial_tv^\varepsilon)dxdt
=-\int_{Q'_\frac{3}{5}(\xi,\tau)}(\zeta^2 G_{\delta}^{(\xi,\tau)} (v^{\varepsilon})^-)\beta'(u^{\varepsilon})
v^{\varepsilon} dx'dt$$
\begin{equation}
-\int_{Q_\frac{3}{5}^+(\xi,\tau)}\zeta^2 G_{\delta}^{(\xi,\tau)} (v^{\varepsilon})^- f_tdxdt
\end{equation}
By calculating appropriately and by noticing that due to the non negativity of $\beta'_\varepsilon$ the boundary integral term has the right sign, so it can be omitted, we obtain
$$\int_{Q_\frac{3}{5}^+(\xi,\tau)}(G_{\delta}^{(\xi,\tau)}|\grad (\zeta (v^\varepsilon)^-)|^2
+\frac{1}{2}[\grad G_{\delta}^{(\xi,\tau)}\grad (\zeta (v^\varepsilon)^-)^2
+G_{\delta}^{(\xi,\tau)}\partial_t (\zeta (v^\varepsilon)^-)^2])dxdt$$
$$\leq \int_{Q_\frac{3}{5}^+(\xi,\tau)} G_{\delta}^{(\xi,\tau)}(|\grad \zeta|^2+\zeta \zeta_t)((v^\varepsilon)^-)^2dxdt
+\frac{1}{2}\int_{Q_\frac{3}{5}^+(\xi,\tau)}\grad G_{\delta}^{(\xi,\tau)}\grad \zeta^2((v^\varepsilon)^-)^2dxdt$$ 
$$+\int_{Q_\frac{3}{5}^+(\xi,\tau)} \zeta^2 G_{\delta}^{(\xi,\tau)}(v^\varepsilon)^- f_tdxdt.$$  
Using the fact that 
$\supp (\mu)=E_\delta(\xi,\tau)$ 
where 
$\mu:=-(\Delta + \partial_t)G_{\delta}^{(\xi,\tau)}$ 
with 
$d\mu=\frac{1}{4\delta^n}dE_{\delta}(\xi,\tau)$ 
and 
$|E_{\delta}(\xi,\tau)|=4\delta^n$ 
(see \cite{Evans}) and that, for $\delta$ small enough, the inequalities
$0\leq G_{\delta}^{(\xi,\tau)}\leq C(n)\ \ in\ \   (B_{\frac{4}{5}}^+\setminus B_\frac{1}{5}^+)\times(-\frac{2}{5},0)$,
and  
$c(n)\leq G_{\delta}^{(\xi,\tau)} \leq C(n)\ \ in\ \ B_{\frac{4}{5}}^+\times (-\frac{2}{5},-\frac{4}{25}),$ 
we have
$$\int_{Q_\frac{2}{5}^+(\xi,\tau)} G_{\delta}^{(\xi,\tau)}|\grad (v^{\varepsilon})^-|^2dxdt
+ \fint_{E_{\delta}^+(\xi,\tau)} (v^{\epsilon})^-dE_{\delta}(\xi,\tau)
\leq C(n)  \int_{-\frac{2}{5}}^{-\frac{4}{25}} \int_{B_\frac{4}{5}^+}((v^\varepsilon)^-)^2dxdt$$
\begin{equation}\label{estimate 1}
+C(n)\int_{-\frac{2}{5}}^0 \int_{B_\frac{4}{5}^+\setminus B_\frac{1}{5}^+}((v^\varepsilon)^-)^2dxdt+ C(n)M
\end{equation}
where 
$M:=||v^\varepsilon||_\infty ||f_t||_\infty$.
 
Now, we first let $\varepsilon$ tend to $0$ in order to obtain (\ref{estimate 1}) for $v^-$, then we let $\delta$ to go to $0$, and, finally, we take the supremum over 
$(\xi,\tau)\in Q_\frac{1}{4}^+$ to obtain, a fortiori,
\begin{equation}\label{inequality for iteration}
\int_{Q_\frac{1}{5}^+}G(x,-t)|\grad v^-|^2dxdt
+\sup_{Q_\frac{1}{5}^+}(v^-)^2
\leq C(n)\int_{-\frac{2}{5}}^{-\frac{4}{25}}\int_{B_\frac{4}{5}^+}(v^-)^2dxdt
+C\sup_{Q_1^+\setminus Q_\frac{1}{5}^+}(v^-)^2+CM.
\end{equation}
Next we want to control the first integral of the right hand side of (\ref{inequality for iteration}) by one similar to the first integral of the left hand side of (\ref{inequality for iteration}). To do that we first multiply the equation in (\ref{penalized equation}) by $\zeta^2(v^\varepsilon)^-$ where $\zeta$ is a smooth cutoff function supported in $B_1\times(-1,t)$ , for any $t\leq-\frac{4}{25}$, $\zeta \equiv 1$ on $B_\frac{4}{5} \times (-\frac{2}{5},t)$, and vanishing near its parabolic boundary with $|\grad\zeta|\leq c$ and $0\leq\zeta_t\leq c$, then we integrate by parts over this set intersected by $\R^n_+$ to have
$$\int_{-1}^t\int_{B_1^+}(\grad (\zeta^2(v^{\varepsilon})^-) \grad v^\varepsilon+(\zeta^2
(v^{\varepsilon})^-)\partial_tv^\varepsilon)dxdt
=-\int_{-1}^t\int_{B_1'}(\zeta^2 (v^{\varepsilon})^-)\beta'(u^{\varepsilon})
v^{\varepsilon} dx'dt$$ 
$$-\int_{-1}^t\int_{B_1^+}\zeta^2
(v^{\varepsilon})^- f_tdxdt.$$
Again, exploiting the positivity of $\beta'$ and letting $\varepsilon$ go to zero, we arrive, as above but in a much simpler way, at the following inequality
$$\int_{B_ \frac{4}{5}^+}(v^-)^2(x,t)dx
+\int_{-\frac{2}{5}}^t\int_{B_\frac{4}{5}^+}|\grad  v^-|^2dxdt
\leq c
\int_{Q_1^+}(v^-)^2dxdt+C(n)Mr^{n+2}$$
$\forall\ t \in (-\frac{2}{5},-\frac{4}{25})$. Observe that
a sufficient portion of the coincidence set is present in $Q_1$ so that the parabolic Poincar\'{e} inequality can be applied to dominate the integral on the right hand side of the above inequality. Therefore, since the second term on the left hand side is non negative, we have, for every 
$-\frac{2}{5}\leq t\leq-\frac{4}{25}$,
$$\int_{B_ \frac{4}{5}^+}(v^-)^2(x,t)dx
\leq C(n)\int_{Q_1^+}|\grad v^-|^2dxdt
+C(n)M.$$
We then integrate the above inequality with respect to $t$ from $-\frac{2}{5}$ to $-\frac{4}{25}$ to get
$$\int_{-\frac{2}{5}}^{-\frac{4}{25}}
\int_{B_ \frac{4}{5}^+}(v^-)^2dxdt
\leq C(n)\int_{Q_1^+}|\grad(v^-)|^2dxdt
+C(n)M.$$
Insert this in (\ref{inequality for iteration}) above and, using the fact that $G(x,-t)\geq c(n)$ for 
$-\frac{2}{5}\leq t\leq-\frac{4}{25}$, 
to have  
$$\int_{Q_\frac{1}{5}^+}G(x,-t)|\grad v^-|^2dxdt
+\sup_{Q_\frac{1}{5}^+}(v^-)^2\leq$$
\begin{equation}\label{hole iteration}
C(n)(\int_{Q_1^+ \setminus Q_\frac{1}{5}^+}
G(x,-t)|\grad v^-|^2dxdt
+\sup_{Q_1^+\setminus Q_\frac{1}{5}^+}(v^-)^2))
+C'(n)M.
\end{equation}
Set 
$\omega(\rho):=\int_{Q_\rho^+}G|\grad v^-|^2dxdt
+\sup_{Q_\rho^+}(v^-)^2$, 
then add 
$C(n)\omega(\frac{1}{5})$ 
to both sides of (\ref{hole iteration}) and divide the new inequality by $1+C(n)$ to have
\begin{equation}\label{iteration 1}
\omega(\frac{1}{5})\leq \lambda \omega (1)+c
\end{equation}
where $\lambda =\frac{C(n)}{1+C(n)}$. Iteration of (\ref{iteration 1}) implies that there exists an $\alpha=\alpha(\lambda)\in (0,1)$ and a constant 
$C=C(n,||u_t||_{\infty},||f_t||_\infty)$ such that
$$\omega (\rho)\leq C\rho^\alpha$$  
for every $0<\rho\leq\frac{r_0}{5}$. This concludes the H\"{o}lder continuity from the past. The continuity from the future follows, now, by standard methods. 
\end{proof}

\subsection{Free boundary regularity}\label{free boundary}

In the study of free boundary regularity it turns out that in order to achieve smoothness of the free boundary one has to focus his attention in a neighborhood of certain free boundary points, which we shall call them non-degenerate, (see Definition \ref{nondeg-point} below). A good candidate for a non-degenerate free boundary point must include one of positive parabolic density of the coincidence set. The fact, that $u_t$ is H\"{o}lder continuous at such a point (see \S\ref{optimal time}), yields a control of the speed of the interphase, a crucial step for our further analysis of the regularity of the free boundary. 
 Since it is more convenient to work with the zero obstacle and with the right hand side of the equation to vanish at the point, which, for simplicity, we take it to be the origin,
we set  $\tilde{u}(x',x_n,t)=u(x',x_n,t)-\psi(x',t) +\frac{1}{2}H\psi(0,0)x^2_n
$ ($H:=\Delta-\partial_t$). Observe that $\{\tilde{u}(x',x_n,t)=0\}=\{u(x',x_n,t)=\psi(x',t)\}$ and upon reflection $\tilde{u}$ in $B^*_1:=\{(x,t)\in \R^{n+1}:|x|^2+t^2<1\}$ satisfies:

\begin{equation}\label{4.28}
\begin{cases}
\tilde{u}(x',0,t)\geq 0 & {\rm{in}}\ \ B^*_1\cap\{x_n=0\}\\
\tilde{u}(x',x_n,t)=\tilde{u}(x',-x_n,t) & {\rm{in}} \ \ B^*_1\\
\Delta\tilde{u}(x',x_n,t)-\partial_t\tilde{u}(x',x_n,t)=H\psi(0,0)-H\psi(x',t) & {\rm{in}}\ \ B^*_1\setminus\{\tilde{u}=0\}\\
\Delta\tilde{u}(x',x_n,t)-\partial_t\tilde{u}(x',x_n,t) \leq H\psi(0,0)-H\psi(x',t) & {\rm{in}}\ \ B^*_1 \\
\end{cases}
\end{equation}

For simplicity of notation we\; "drop"\; the\; "$\sim$"\; for the rest of this section.

Now we pass the $u_t$ term to the right hand side of the equation and if we assume that $H\psi$ is at least $C^{\alpha}$ we can apply the elliptic theory developed in  \cite{ACS2008}, \cite{CSS} and extended in \cite{CDS} at the t-level of the point. Consequently, if the origin is regular then at $t=0$  
the blow up limit $v_0$ of the solution $u$ (up to sub-sequences) exists, 
and, in appropriate coordinates,
$$v_0(x)=\frac{2}{3}\rho^\frac{3}{2}\cos(\frac{3}{2}\theta)$$ 
where $\rho=\sqrt{x_1^2+x_n^2}$ and $\theta={\arctan}(\frac{x_n}{x_1})$ (unique up to rotations).

Now, we are ready to state the "hyperbolic" definition of our non-degenerate free boundary point.

\begin{defn}\label{nondeg-point}
Let $(x_0,t_0)$ be a free boundary point and 
$B_r^*(x_0,t_0):=\{(x,t)\in R^{n+1}: (x-x_0)^2+(t-t_0)^2<r^2\}$, set $$l:=\limsup_{r\to 0^+}\frac{||u||_{L^\infty(B_r^*(x_0,t_0))}}{r^{3/2}}$$
A point $(x_0,t_0)$ is called a non-degenerate free boundary point if it is of positive parabolic density of the coincidence set and $0<l<\infty$, otherwise degenerate.
\end{defn}

With this definition at our hands we state the main result of this section:

\begin{thm}
Let $u$ be a solution to (\ref{4.28}). Assume the origin to be a non-degenerate free boundary point. Then the free boundary is a $C^{1,\alpha}$ $n$-dimensional surface about the origin.
\end{thm}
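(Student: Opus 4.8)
The plan is to run a parabolic blow-up analysis at the origin, reduce the free boundary analysis at each fixed time level to the elliptic Signorini theory of \cite{ACS2008,CSS,CDS}, and then glue the time slices together using the H\"older continuity of $u_t$ supplied by Theorem~\ref{Theorem6} (whose hypothesis, positive parabolic density, is exactly part of the definition of a non-degenerate free boundary point). Throughout one keeps in mind the a priori regularity already available: $\grad u\in C^{1/2,1/4}_{x,t}$ up to the hyperplane (Theorem~\ref{Theorem5}), the tangential and time quasi-convexity of $u$, and the H\"older continuity of $u_t$ near the point.

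After normalizing as in (\ref{4.28}) set $g(x',t):=H\psi(0,0)-H\psi(x',t)$, which is $C^\alpha$ with $g(0,0)=0$; since the origin lies in the coincidence set, $u\ge 0$ on $\{x_n=0\}$, and $u_t$ is continuous there by Theorem~\ref{Theorem6}, one has $u_t(0,0)=0$. Consider the rescalings $u_r(x,t):=u(rx,r^2t)/\|u\|_{L^\infty(B_r^*)}$; the bound $0<l<\infty$ forces $\|u\|_{L^\infty(B_r^*)}\sim r^{3/2}$ along a subsequence, hence $H u_r\to 0$ (because $|g(rx,r^2t)|\lesssim r^\alpha$) and, crucially, $(u_r)_t(x,t)=r^{1/2}u_t(rx,r^2t)/\|u\|_{L^\infty(B_r^*)}\lesssim r^{1/2+\beta}\to 0$, using $u_t\in C^\beta$ and $u_t(0,0)=0$. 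Thus any blow-up limit $u_0$ is time independent: it is a global solution of the zero-obstacle elliptic Signorini problem, even in $x_n$, and homogeneous of degree $3/2$ (by $0<l<\infty$, or by the frequency formula). By the elliptic classification of such solutions, after a rotation $u_0(x)=\frac{2}{3}\rho^{3/2}\cos(\frac{3}{2}\theta)$ with $\rho,\theta$ polar coordinates in the $(x_1,x_n)$-plane, so the origin is a regular point; this is precisely the statement already recorded in the text preceding the theorem.

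Next I would upgrade to a full space-time neighborhood. By Theorem~\ref{Theorem6} the family $\{u(\cdot,\cdot,t)\}_{|t|\le t_0}$ is a H\"older-continuous family of elliptic Signorini solutions with right-hand side $\Delta_x u(\cdot,\cdot,t)=u_t(\cdot,\cdot,t)+g(\cdot,t)\in C^\alpha$ uniformly in $t$; moreover $u_t\in C^\beta$ with $u_t(0,0)=0$ gives $|u(x,t)-u(x,0)|\le C|t|(|x|+|t|^{1/2})^\beta\lesssim r^{2+\beta}\ll r^{3/2}$ on $B_r^*$, so the non-degeneracy $\|u(\cdot,0,t)\|_{L^\infty(B_r')}\gtrsim r^{3/2}$ persists for $|t|\le r^2$, and the blow-ups of the individual slices at free boundary points near the origin stay close to---hence, by the rigidity of the classification, equal to---the regular profile. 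The elliptic regularity theory of \cite{ACS2008,CSS,CDS}, applied at each time level, then yields that for every $|t|\le t_0$ the coincidence set $\Lambda_t\subset\R^{n-1}$ has a $C^{1,\alpha}$ boundary near the origin, say $\Lambda_t\cap B'=\{x_1\le g(x'',t)\}$ with $x''=(x_2,\dots,x_{n-1})$ and $\|g(\cdot,t)\|_{C^{1,\alpha}}$ bounded uniformly in $t$, while uniqueness of the elliptic blow-up pins down the tangent hyperplane of $\Lambda_t$ at the moving free boundary point up to $o(1)$, uniformly in $t$.

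The remaining, and main, point is the joint $C^{1,\alpha}$ regularity of $g$ in $(x'',t)$, i.e. gluing the slices. Here the speed control is used quantitatively: since $\|u(\cdot,\cdot,t_1)-u(\cdot,\cdot,t_2)\|_{L^\infty(B_r^*)}\lesssim|t_1-t_2|\,r^\beta$, the rescaled solutions $u_r(\cdot,\cdot,t)$ converge to the regular profile at a rate uniform in $t$, and the axis of the limiting profile depends on $t$ in a H\"older fashion; equivalently, $\partial\Lambda_t$ lies in an $o(r)$-neighborhood of a hyperplane whose normal is H\"older in $t$. Combining the uniform spatial $C^{1,\alpha}$ bound for $g(\cdot,t)$ with this H\"older-in-$t$ control of $g$ and of $\grad_{x''}g$, together with a bootstrap of the elliptic estimate transplanted along the H\"older-continuous family $\{u(\cdot,\cdot,t)\}$, one concludes that $g\in C^{1,\alpha}$ jointly, so the free boundary $\bigcup_t\partial\Lambda_t\times\{t\}$ is a $C^{1,\alpha}$ surface about the origin. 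I expect this last step to be the genuine obstacle: because on the thin manifold every first-order quantity degenerates at the free boundary (as $u\sim (x_1)_+^{3/2}$), the naive ``differentiate $u(\gamma(t),0,t)=0$'' computation of the speed is useless, and one must instead extract a modulus of continuity for the direction of the moving tangent plane by propagating the blow-up convergence rate uniformly across time, which is exactly where the quantitative form of Theorem~\ref{Theorem6} and the rigidity of the $3/2$-homogeneous classification do the work.
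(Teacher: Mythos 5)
Your blow-up analysis diverges from the paper at the very first step, and the divergence is fatal to the closing argument. You rescale parabolically, $u_r(x,t)=u(rx,r^2t)/\|u\|_{L^\infty(B_r^*)}$, and correctly observe that this kills the time derivative in the limit, so $u_0$ is a static $\frac32$-homogeneous elliptic Signorini solution. The paper instead rescales hyperbolically, $u_r(x,t)=u(rx,rt)/r^{3/2}$; the heat operator degenerates in the limit so that $u_0$ is still harmonic in $x$ for each fixed $t$, but the coincidence set survives as a \emph{translating} half-space, yielding the traveling-wave profile
\[
u_0(x,t)=\tfrac23\,\rho(t)^{3/2}\cos\!\bigl(\tfrac32\theta(t)\bigr),\qquad
\rho(t)=\sqrt{(x_1+\omega t)^2+x_n^2}.
\]
The drift $\omega$ \emph{is} the speed of the free boundary; it is the slope, in $(x_1,t)$, of the space-time tangent plane. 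By freezing $t$ you throw away precisely this quantity at the blow-up stage and then must reconstruct it by comparing time slices, which is the part you flag as ``the genuine obstacle'' and for which you give only a sketch (``bootstrap of the elliptic estimate transplanted along the H\"older-continuous family'') without a concrete mechanism. In the paper this difficulty never arises: the speed comes for free as a parameter of the blow-up limit, its existence and single-valuedness are established by the matching step $bA=aB$ (using continuity of $u_t$ to rule out a kink between past and future), and the $C^{1,\alpha}$ regularity of the normals is obtained by a compactness argument comparing the translating blow-up limits $u_0^{(p)}$ at nearby free boundary points.

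There is also a quantitative slip that matters for the route you chose. The ball $B_r^*(x_0,t_0)$ in Definition~\ref{nondeg-point} is the \emph{Euclidean} ball in $(x,t)$, so $|t|$ ranges up to $r$, not $r^2$. Your persistence-of-non-degeneracy estimate $|u(x,t)-u(x,0)|\lesssim r^{2+\beta}\ll r^{3/2}$ therefore does not hold on $B_r^*$; the correct bound with your ingredients is only $\lesssim r^{1+\beta}$, which is $\ll r^{3/2}$ only when $\beta>1/2$, and Theorem~\ref{Theorem6} gives no such lower bound on $\beta$. This is another symptom of the same problem: your parabolic scaling is mismatched with the hyperbolic normalization built into the non-degeneracy hypothesis. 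Switching to the paper's $u(rx,rt)/r^{3/2}$ rescaling both fixes this mismatch and makes the gluing step unnecessary.
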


The following "hyperbolic" blow up sequence will be very useful for our analysis since, at a point, it preserves the geometry of the free boundary: $$u_r(x,t):=\frac{u(rx,rt)}{r^{3/2}}.$$

\begin{lemma}\label{blowups}
Let $u$ be a solution to (\ref{4.28}). If $(0,0)$ is a non-degenerate free boundary point then there exists a sequence  $u_{r_j}$ of blow ups which converges uniformly on compact subsets to a function $u_0$ such that, (in appropriate coordinates),  
$$u_0(x,t)=\frac{2}{3}\rho(t)^\frac{3}{2}\cos(\frac{3}{2}\theta(t))$$ 
where 
$\rho(t):=\sqrt{(x_1+\omega t)^2+x_n^2}$ 
and 
$\theta(t):=\arctan(\frac{x_n}{x_1+\omega t})$ for some $\omega\in\R$. 
\end{lemma}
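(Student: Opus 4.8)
The plan is to run a blow-up analysis at the origin with the \emph{hyperbolic} scaling $u_r(x,t):=u(rx,rt)/r^{3/2}$ of the statement, which matches the $3/2$ vanishing encoded in the non-degeneracy and which, exactly as in \cite{AC2010}, leaves the thin obstacle condition scale invariant while multiplying the heat-equation time derivative by the vanishing factor $r$: under this scaling (\ref{4.28}) becomes, off the coincidence set,
$$\Delta u_r-r\,\partial_t u_r=r^{1/2}\bigl(H\psi(0,0)-H\psi(rx',rt)\bigr)=:F_r,\qquad ||F_r||_\infty\to0,$$
together with the even symmetry and the Signorini complementarity conditions on $\{x_n=0\}$. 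From $l<\infty$ one gets $||u||_{L^\infty(B^*_\rho)}\le C\rho^{3/2}$ for small $\rho$, hence $||u_r||_{L^\infty(B^*_R)}\le CR^{3/2}$ uniformly in $r$; combined with the optimal space regularity $\grad u\in C^{1/2,1/4}_{x,t}$ (Theorem \ref{Theorem5}), the H\"older continuity of $u_t$ at the non-degenerate point (Theorem \ref{Theorem6}), and the fact that the parabolic Signorini estimates are uniform as the coefficient $r$ of $\partial_t$ tends to $0$ (the mechanism of \cite{AC2010}; see also \S\ref{subsection4.2b} and \cite{DGPT}), the $u_r$ and their gradients have a modulus of continuity independent of $r$ on compact sets. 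By Arzel\`a--Ascoli a sequence $r_j\to0$ can be extracted along which $u_{r_j}\to u_0$ and $\grad u_{r_j}\to\grad u_0$ locally uniformly (and in $H^1_{loc}$, from uniform energy bounds); $l>0$ guarantees $u_0\not\equiv 0$.

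Next I would identify $u_0$. Since $r_j u_{r_j}\to0$ locally uniformly, $r_j\partial_t u_{r_j}=\partial_t(r_j u_{r_j})\to0$ in the sense of distributions, and $F_{r_j}\to0$; passing to the limit in the displayed equation and in the ($H^1_{loc}$-stable) complementarity conditions shows that every time slice $u_0(\cdot,t)$ is a global solution of the \emph{elliptic} thin obstacle problem in $\R^n$, with $3/2$ growth and with $(0,0)$ in its coincidence set. By an Almgren/Weiss-type monotonicity argument adapted to the hyperbolic dilations — in the spirit of the parabolic frequency formula of \cite{DGPT} and of the monotonicity formula of \cite{AC} — the blow-up $u_0$ is homogeneous of degree $3/2$, $u_0(\lambda x,\lambda t)=\lambda^{3/2}u_0(x,t)$. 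In particular the slice at $t=0$ is a $3/2$-homogeneous global solution of the elliptic problem with a free boundary point at the origin; since the origin is non-degenerate, hence a regular free boundary point, that slice is — after a rotation in the $x'$-variables — the half-space solution $\tfrac23\rho^{3/2}\cos(\tfrac32\theta)$ recalled before the statement, by the elliptic theory of \cite{ACS2008}, \cite{CSS}, \cite{CDS}.

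Finally I would propagate this to every slice. Regular free boundary points of the elliptic problem being stable, for $t$ near $0$ the slice $u_0(\cdot,t)$ is again a half-space solution, with coincidence set $\{x_1\le g(t),\ x_n=0\}$ for a continuous $g$, $g(0)=0$, the rotation carrying it to standard position depending continuously — hence, once normalized, constantly — on $t$. The hyperbolic $3/2$-homogeneity of $u_0$ forces $\{(x,t):u_0(x',0,t)=0\}$ to be a cone, i.e. $g(\lambda t)=\lambda g(t)$, so $g(t)=-\omega t$ for a constant $\omega$; the control of the speed of the interphase coming from the H\"older continuity of $u_t$ (Theorem \ref{Theorem6}) is what rules out a jump in the slope of $g$ across $t=0$, so that $\omega$ is a single number. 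Replacing $x_1$ by $x_1+\omega t$ in the half-space profile yields $u_0(x,t)=\tfrac23\rho(t)^{3/2}\cos(\tfrac32\theta(t))$ with $\rho(t)=\sqrt{(x_1+\omega t)^2+x_n^2}$ and $\theta(t)=\arctan(x_n/(x_1+\omega t))$, which is exactly the assertion.

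I expect the main obstacle to be establishing the homogeneity of $u_0$ — equivalently, that \emph{every} time slice, and not only the one at $t=0$, is a regular-point blow-up, and that these slices fit together into a single traveling wave with one speed $\omega$. This requires a monotonicity or frequency formula compatible with the hyperbolic scaling (a genuinely new ingredient compared with the parabolic Almgren formula of \cite{DGPT}), fed by the speed control of Theorem \ref{Theorem6}; the uniformity of the parabolic Signorini estimates as $r\to0$, used for the precompactness, is the other delicate point.
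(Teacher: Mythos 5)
You correctly identify the right scaling, the compactness mechanism, the elliptic slice structure, and the role of the half-space solution at $t=0$ and of the continuity of $u_t$ at the end. But the proposal hinges on a step you yourself flag as unresolved: the homogeneity of $u_0$ under the hyperbolic dilations, which you would derive from an Almgren/Weiss-type formula adapted to that scaling. No such formula is proved in the paper (nor is it in \cite{DGPT} or \cite{AC}, where the scaling is parabolic), and it is not needed. Everything you deduce from homogeneity — that every slice, not only $t=0$, is a regular-point blow-up, that the coincidence set is $\{x_1\leq g(t),\,x_n=0\}$ with $g$ linear, and finally the traveling-wave form — is obtained in the paper by a shorter and more elementary route, so the missing frequency formula is a genuine gap in your argument, not merely a loose end.

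Here is what the paper does instead and what it buys. (a) It invokes the quasi-convexity (Theorem~\ref{semiconvexity} in time, together with the tangential spatial convexity) to conclude that the coincidence set of $u_0$ is a \emph{convex cone} in $(x',t)$, hence is bounded by two supporting hyperplanes $Ax_1+at=0$ for $t\geq 0$ and $Bx_1+bt=0$ for $t\leq 0$ with $A,B\geq 0$; this replaces your frequency-based homogeneity. (b) It shows $A,B>0$ by a growth-mismatch argument: if $A=0$, then for $t>0$ the slice is entire harmonic, contradicting the $3/2$-growth seen at $t=0$. (c) It propagates the $|x|^{3/2}$ growth to all time slices not by homogeneity but by convexity in $t$: $u_0(x,t)\geq u_0(x,0)+(u_0)_t(x,0)\,t$. (d) It then pins down each slice by Phragmen--Lindel\"of: a harmonic function on $\R^n$ minus a slit half-plane with $3/2$ growth is, up to a constant, $\tfrac23\rho^{3/2}\cos\tfrac32\theta$. (e) It matches the two cones across $t=0$ exactly as you do, by ruling out a jump in $\partial_t u_0$ using continuity of $u_t$ (Theorem~\ref{Theorem6}), giving $aB=bA$ and $\omega:=a/A$. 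So the place where the two arguments coincide is the last step; the place where yours has a real hole — the homogeneity of $u_0$ and the stability-of-regular-points step that relies on it — is the place the paper sidesteps with convexity plus Phragmen--Lindel\"of. If you want to repair your proposal without the unproved frequency formula, you should replace the homogeneity claim with the convex-cone structure of the coincidence set and the convexity inequality in (c).
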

\begin{proof}
Since $0<l<\infty$, it is clear that we can extract a subsequence $u_{r_j}$ converging uniformly on compact subsets to a non trivial limit $u_0$. This $u_0$ is a harmonic function for every fixed $t$ outside of the coincidence set; and the coincidence set, due to the density assumption, is a convex cone in $\R^n$, or more precisely in 
$(x',t)$ 
variables. Also, by the discussion above, at $t=0$ 
$u_0=\frac{2}{3}\rho^{3/2}\cos\frac{3}{2}\theta$ 
where $\rho=\sqrt{x_1^2+x_n^2}$ and 
$\theta=\arctan(\frac{x_n}{x_1})$. Moreover the convex cone is composed by the following two supporting hyperplanes $Ax_1+at=0$ for $t\geq0$ and $Bx_1+bt=0$ for $t\leq0$ with the constants $A\geq0$, $B\geq0$ and $bA\leq{aB}$. 
We want to prove that this convex cone is actually a non-horizontal half space i.e. $A>0$, $B>0$, and $bA=aB$, and $u_0$ admits the stated representation; we do this in several steps: 

\textit{Step I: $A>0$ and $B>0$}

For, if $A=0$ then for every $t>0$ $u_0(x,t)$ is harmonic in all of $\R^n$ i.e. of polynomial growth. But for $t=0$ $u_0$ has $3/2$ degree of growth, therefore, by continuity of $u_0$, a contradiction. Similarly $B>0$.

\textit{Step II: For each fixed t, $u \sim |x|^\frac{3}{2}$ as $|x| \to \infty$ with $x\cdot e_1\geq \varepsilon$ for some $\varepsilon>0$}

It is enough to show the bound by below. Therefore take a sequence  $x^{(j)}$ such that $|x^{(j)}| \to \infty$ with $x^{(j)} \cdot e_1 \geq \varepsilon$ for every $j$ then by convexity $u_0(x^{(j)},t) \geq u_0(x^{(j)},0) + (u_0)_t(x^{(j)},0)t$, hence by the  behavior of $u_0$ at $t=0$ the result follows.

\textit{Step III: For each fixed $t$, 
$$u_0(x,t) = \frac{2}{3} \rho(t)^\frac{3}{2} \cos (\frac{3}{2}\theta (t)),$$
where for 
$t>0$, 
$\rho(t)=\sqrt{(x_1+\frac{a}{A}t)^2+x_n^2}$, 
$\theta(t) = \arctan\frac{x_n}{x_1+\frac{a}{A}t}$
and for 
$t<0$, 
$\rho(t)=\sqrt{(x_1+\frac{b}{B}t)^2+x_n^2}$,
$\theta (t) = \arctan \frac{x_n}{x_1+\frac{b}{B}t}$
}

Indeed, for each fixed 
$t>0$, $u_0$ 
is a harmonic function which vanishes for 
$\{x_1\leq -\frac{a}{A}t\}\cap\{x_n=0\}$
and grows at infinity with 
$\frac{3}{2}$ 
exponent, therefore by Phragmen-Lindelof theorem we obtain the representation. Analogously, for $t<0$.  

\textit{Step IV: $bA=aB$}

For, if not then 
$$\partial_t u_0(0,0^+)-\partial_t u_0(0,0^-)=(\frac{a}{A}-\frac{b}{B})\rho^\frac{1}{2}\cos(\frac{1}{2}\theta)\neq0,$$ 
whence, by approximation, a contradiction to the continuity of $\partial_t u$ at the origin.

Set $\omega:=\frac{a}{A}$ and the proof is complete.
\end{proof}

Finally we prove our theorem:
\begin{proof}
Obviously the existence of $\omega$ in Lemma \ref{blowups} implies the differentiability of the free boundary at the origin. Also, due to the upper semi-continuity of the elliptic Almgren's frequency function, we have the differentiability of the free boundary for any nearby point $p=(x_p,t_p)$ at least when $t_p\leq 0$, since $u_t$ is continuous there. Now, if $t_p>0$ and $p=(x_p,t_p)$ still near the origin, we observe that the frequency function will converge to $\frac{3}{2}$. Consequently, the point $p=(x_p,t_p)$ will be a free boundary point of positive parabolic density with respect to zero set, which renders $u_t$ continuous there. Hence we have the differentiability of the free boundary there, too. To prove the continuous differentiability of it consider two distinct free boundary points nearby, say p and 0. Assume, on the contrary, that it is not true, that is $\omega(p)$ does not converge to $\omega(0)$ as $p \to 0$. Consider the blow up sequences  $u_{r_j}^{(p)}$ and $u_{r_i}^{(0)}$ around $p$ and $0$, respectively, where $u_{r_j}^{(p)}(x,t):=\frac{u(r_j((x,t)-p))}{r_J^3/2}$. 
These sequences converge uniformly to 
$$u_0^{(p)}(x,t):= \frac{2}{3}\rho^\frac{3}{2}(p,t)\cos \frac{3}{2}\theta (p,t)$$
and 
$$u_0^{(0)}(x,t):= \frac{2}{3}\rho^\frac{3}{2}(0,t)\cos \frac{3}{2}\theta (0,t)$$
respectively, where 
$\rho(p,t):=\sqrt{(x_1(p)+\omega(p)t(p))^2+x_n^2)}$ and $\theta(p,t):=\arctan\frac{x_n}{x(p))+\omega(p)t(p)}$. So, if $\omega(p)$ does not converge to $\omega(0)$ then $u_0^{(p)}$ does not converge to $u_0^{(0)}$, therefore a contradiction to the continuity of the solution $u$. Hence a $C^{\alpha}$ estimate of the free boundary normals follows easily. 
\end{proof}

\bibliographystyle{plain}   
\bibliography{biblio}             
\index{Bibliography@\emph{Bibliography}}%

\begin{tabular}{l}
Ioannis Athanasopoulos\\ University of Crete \\ Department of Mathematics  \\ 71409 \\
Heraklion, Crete GREECE
\\ {\small \tt athan@uoc.gr}
\end{tabular}
\begin{tabular}{l}
Luis Caffarelli\\ University of Texas \\ Department of Mathematics  \\ TX 78712\\
Austin, USA
\\ {\small \tt caffarel@math.utexas.edu}
\end{tabular}
\begin{tabular}{lr}
Emmanouil Milakis\\ University of Cyprus \\ Department of Mathematics \& Statistics \\ P.O. Box 20537\\
Nicosia, CY- 1678 CYPRUS
\\ {\small \tt emilakis@ucy.ac.cy}
\end{tabular}

\end{document}